\numberwithin{equation}{section}
\tikzset{
    arc arrow/.style args={%
    to pos #1 with length #2}{
    decoration={
        markings,
         mark=at position 0 with {\pgfextra{%
         \pgfmathsetmacro{\tmpArrowTime}{#2/(\pgfdecoratedpathlength)}
         \xdef\tmpArrowTime{\tmpArrowTime}}},
        mark=at position {#1-\tmpArrowTime} with {\coordinate(@1);},
        mark=at position {#1-2*\tmpArrowTime/3} with {\coordinate(@2);},
        mark=at position {#1-\tmpArrowTime/3} with {\coordinate(@3);},
        mark=at position {#1} with {\coordinate(@4);
        \draw[-{Stealth[length=#2,bend]}]       
        (@1) .. controls (@2) and (@3) .. (@4);},
        },
     postaction=decorate,
     }
}
\newtheorem{thm}{Theorem}[section]
\newtheorem{lem}[thm]{Lemma}
\newtheorem{prop}[thm]{Proposition}
\newtheorem{cor}[thm]{Corollary}
\theoremstyle{remark}
\newtheorem{rem}[thm]{Remark}
\theoremstyle{definition}
\newtheorem{defn}[thm]{Definition}
\newtheorem{exmp}[thm]{Example}
\newtheoremstyle{case}{}{}{}{}{}{:}{ }{}
\theoremstyle{case}
\DeclareMathOperator{\Bl}{Bl}
\DeclareMathOperator{\Sing}{Sing}
\DeclareMathOperator{\Aut}{Aut}
\DeclareMathOperator{\Ceva}{Ceva}
\DeclareMathOperator{\Tors}{Tors}
\newcommand{\abs}[1]{\left\vert#1\right\vert}
\newcommand{\A}{\mathscr{A}}
\title{The fundamental group of quotients of products of some topological spaces by a finite group --- A generalization of a theorem of Bauer--Catanese--Grunewald--Pignatelli}
\author{Rodolfo Aguilar Aguilar}
\address{Universit\'e Grenoble-Alpes, Institut Fourier, 100 rue de Maths, 384610, Gières, France.}
\email{rodolfo.aguilar-aguilar@univ-grenoble-alpes.fr}
\begin{document}



\maketitle

\begin{prelims}

\DisplayAbstractInEnglish

\bigskip

\DisplayKeyWords

\medskip

\DisplayMSCclass

\bigskip

\languagesection{Fran\c{c}ais}

\bigskip

\DisplayTitleInFrench

\medskip

\DisplayAbstractInFrench

\end{prelims}


\newpage

\setcounter{tocdepth}{1} 

\tableofcontents


\section{Introduction}
The fundamental group of a quotient of a Haussdorf space $X$ by a finite group $G$ acting freely can be computed noticing that $X\to X/G$ is a covering map, and then using the long exact sequence of homotopy groups associated to a fibration. When $X=X_1\times\cdots\times X_k$ and $G$ acts on each $X_i$ freely and diagonally on $X$, the fundamental group of $X_1\times\cdots\times X_k$ sits as a finite-index normal subgroup of $\pi_1(X/G)$.

In the case where each $X_i$ is a projective smooth curve and the action of $G$ is only \emph{faithful}, the following Theorem was shown in \cite{CataneseBauer1}.
\begin{thm}\cite[Theorems~0.10 and~4.1]{CataneseBauer1}\label{thm:Int1} Let $C_1,\ldots,C_k$ be smooth projective curves and let $G$ be a finite group acting faithfully by automorphisms on each of them. Consider the diagonal action of $G$ on the product $C_1\times \cdots\times C_k$, then the fundamental group of $(C_1\times \cdots \times C_k)/G$ admits a normal finite index subgroup $\mathcal{N}$ isomorphic to a product of fundamental groups of smooth projective curves. 
\end{thm}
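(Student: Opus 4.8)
The plan is to realize $\pi_1$ of the quotient as an explicit quotient of a fiber product of orbifold surface groups, and then to exhibit the required subgroup $\mathcal N$ as the image of the product of the surface groups $\pi_1(C_i)$. First I would set up, for each $i$, the branched cover $C_i\to C_i':=C_i/G$ and its orbifold (Fuchsian) surface group $T_i:=\pi_1^{\mathrm{orb}}(C_i')$, which sits in a short exact sequence $1\to\pi_1(C_i)\to T_i\xrightarrow{\rho_i}G\to1$ with $\pi_1(C_i)=\ker\rho_i$ torsion-free, $T_i$ acting properly discontinuously on the simply connected $\widetilde{C_i}$ (one of $S^2,\mathbb C,\mathbb H$). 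Forming the fiber product $\mathbb H:=T_1\times_G\cdots\times_G T_k=\{(t_i):\rho_1(t_1)=\cdots=\rho_k(t_k)\}$ gives an extension $1\to\Pi\to\mathbb H\to G\to1$ with $\Pi:=\prod_i\pi_1(C_i)$, and $\mathbb H$ acts properly discontinuously on the simply connected $\widetilde X:=\prod_i\widetilde{C_i}$ with quotient exactly $X:=(\prod_iC_i)/G$. By Armstrong's theorem on fundamental groups of orbit spaces, $\pi_1(X)\cong\mathbb H/H_0$, where $H_0\trianglelefteq\mathbb H$ is generated by the elements acting with a fixed point; since $(t_1,\dots,t_k)$ fixes a point of $\widetilde X$ iff each $t_i$ is elliptic, $H_0$ is the normal subgroup generated by the ``diagonal elliptics'' $(e_1,\dots,e_k)$ with each $e_i\in T_i$ elliptic of common image in $G$.

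Next I would take $\mathcal N:=\bar\Pi$, the image of $\Pi$ in $\pi_1(X)=\mathbb H/H_0$; it is normal of finite index (dividing $|G|$), so the task reduces to computing $\Pi\cap H_0$ and identifying $\Pi/(\Pi\cap H_0)$. Let $S:=\{g\in G: C_j^g\neq\emptyset\ \text{for all } j\}$ --- exactly the elements with a fixed point on the product --- and let $E_i^S\trianglelefteq T_i$ be generated by the elliptics of $T_i$ whose image lies in $S$. I claim $\Pi\cap H_0=\prod_i M_i$ with $M_i:=E_i^S\cap\pi_1(C_i)$. The inclusion $\subseteq$ is formal (project the generators of $H_0$ to each factor). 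For $\supseteq$ I would show each element $(1,\dots,c_i,\dots,1)$ with $c_i\in M_i$ lies in $H_0$: writing $c_i$ as a product of elliptics with images $g\in S$ and using that every such $g$ is realized by an elliptic in each $T_j$ (this is the defining property of $S$), one assembles $c_i$ into a product of diagonal elliptics whose off-$i$ coordinates are then cancelled inductively.

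Finally I would identify each factor $\pi_1(C_i)/M_i\cong\pi_1(C_i)E_i^S/E_i^S$ as a subgroup of $Q_i:=T_i/E_i^S$. Since $E_i^S$ is the normal closure of a set of elliptics, $Q_i$ is again the orbifold surface group of $C_i'$ carrying only the cone points whose monodromy does not lie in $S$, hence a (possibly finite or Euclidean) closed $2$-orbifold group, and $\pi_1(C_i)/M_i=\ker\bigl(\bar\rho_i\colon Q_i\to G/\rho_i(E_i^S)\bigr)$ is normal of finite index. The point making it torsion-free --- and thus, as a torsion-free finite-index subgroup of a closed $2$-orbifold group, isomorphic to $\pi_1(D_i)$ for a smooth projective curve $D_i$ --- is that $E_i^S$ contains \emph{every} elliptic power with image in $S$, so each surviving cone point of $Q_i$ has order equal to the order of its monodromy image in $G/\rho_i(E_i^S)$, forcing $\bar\rho_i$ to be injective on cone subgroups. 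Combining, $\mathcal N\cong\prod_i\pi_1(D_i)$, as desired.

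The main obstacle I anticipate is the factorwise description of $\Pi\cap H_0$ together with the torsion-freeness of the factors: both hinge on keeping precise track of which powers of which stabilizer elements acquire fixed points on the whole product (the set $S$), rather than on the individual $C_i$. In addition, the degenerate cases where some $\widetilde{C_i}$ is $S^2$ or $\mathbb C$ --- so that $Q_i$ is a bad, spherical, or Euclidean orbifold group --- must be checked separately, with $\pi_1(D_i)$ then trivial or $\mathbb Z^2$, i.e.\ $D_i\cong\mathbb P^1$ or an elliptic curve.
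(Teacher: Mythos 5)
Your setup---the orbifold groups $T_i$, the fiber product $\mathbb{H}=T_1\times_G\cdots\times_G T_k$, and Armstrong's theorem giving $\pi_1(X)\cong \mathbb{H}/H_0$ with $H_0$ normally generated by the diagonal elliptics over $S$---is exactly the Bauer--Catanese--Grunewald--Pignatelli route recalled in the introduction, and your final discussion of torsion-freeness of $\pi_1(C_i)/M_i$ matches Proposition~\ref{prop:Quotient}. But the argument breaks at the central claim $\Pi\cap H_0=\prod_i M_i$. The inclusion $\subseteq$ is fine; the inclusion $\supseteq$ is \emph{false} in general, and your proposed proof of it is circular: assembling $c_i=e_{i,1}\cdots e_{i,m}$ into diagonal elliptics $\delta_l=(f_{1,l},\ldots,e_{i,l},\ldots,f_{k,l})$ yields an element of $H_0$ whose $j$-th coordinate ($j\neq i$) is some $d_j\in M_j$, and ``cancelling'' these off-$i$ coordinates requires precisely the statement $(1,\ldots,d_j,\ldots,1)\in H_0$ that you are trying to prove; there is no way to choose the companions $f_{j,l}$ so that $d_j=1$, because the partial products $g_1\cdots g_l$ need not lie in $S$ (which is only a union of conjugacy classes, not a subgroup). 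Concretely, for the BCGP numerical Godeaux surfaces $(C_1\times C_2)/(\mathbb{Z}/5)$ with both branch data $(5,5,5)$ one has $S=\mathbb{Z}/5$, $E_i^S=T_i$, $M_i=\pi_1(C_i)$ and $I=G$, so your claim would force $\pi_1=\Pi/(M_1\times M_2)=1$, whereas the fundamental group is $\mathbb{Z}/5$. Structurally, the quotient $\bigl(\prod_i M_i\bigr)/(\Pi\cap H_0)$ is exactly (a piece of) the finite group $E$ in Proposition~\ref{prop:Int1}, equivalently of the finite kernel in Theorem~\ref{thm:Int2}: it is finite but frequently nontrivial, and if it always vanished neither BCGP nor this paper would need the second half of their arguments.

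Because of this finite excess, your $\mathcal{N}=\bar\Pi$ is only a finite extension of $\prod_i\pi_1(C_i)/M_i$ and need not be a product of surface groups (in the Godeaux example it is the finite group $\mathbb{Z}/5$). The genuine proof accepts that the map $\pi_1(X/G)\to\prod_i T_i/E_i^S$ has merely \emph{finite} kernel, then invokes residual finiteness of $\pi_1(X/G)$---itself a nontrivial input, obtained from cohomological goodness of orbifold surface groups via \cite[Proposition~6.1]{grunewald2008}, cf.\ Lemma~\ref{lem:Ci'IsResiduallyFinite}---to produce a finite-index normal subgroup $\Gamma$ meeting that kernel trivially, and only then intersects $\Theta(\Gamma)$ with the individual factors to build $\mathcal{N}$ as in Theorem~\ref{thm:MainThm2}. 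This step is the essential content of the theorem and is entirely missing from your outline.
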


It was later extended in \cite{dedieu} to the case when the action of $G$ is non-necessarily faithful. There, they quotient $G$ to obtain a group acting faithfully, follow the subsequent arguments and then extend again to $G$. 

Let us explain briefly the method of proof in \cite{CataneseBauer1}.  First, they consider the orbifold surface groups $T_i$ of $C_i/G$, which are an extension of $G$ by $\pi_1(C_i)$ and hence come with a surjective morphism to $G$ (see Subsection \ref{ss:FundamentalQuoProd}). They show that the fundamental group $\pi_1((C_1\times\cdots\times C_k)/G)$ is isomorphic to the quotient of the fiber product $\mathbb{H}:=T_1\times_G \cdots \times_G T_k$ by the normal subgroup $\Tors(\mathbb{H})$ generated by the elements of torsion. 

The second part relies on the following proposition whose proof uses abstract group theoretic arguments.
\begin{prop}\cite[Proposition~3.5]{dedieu}\label{prop:Int1} There exists a short exact sequence of groups 
$$1 \to E \to \mathbb{H}/\Tors{\mathbb{H}}\to T\to 1 $$
where $E$ is finite and $T$ is a group of finite index in a product of orbifold surfaces groups.
\end{prop}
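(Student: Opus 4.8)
The plan is to kill the torsion by mapping to the product of the \emph{surface} groups of the quotient curves, and then to analyze the kernel. Write $D_i := C_i/G$ and let $N_i := \Tors(T_i)$ be the normal subgroup of $T_i$ generated by its torsion elements; since $T_i$ is the orbifold surface group of $D_i$, the quotient $\overline{T_i}:=T_i/N_i$ is precisely the (ordinary) surface group $\pi_1(D_i)$, i.e. an orbifold surface group with empty branch locus. The componentwise projections assemble into a surjection $\prod_i T_i \twoheadrightarrow \prod_i \overline{T_i}$, and restricting to $\mathbb{H}\subseteq\prod_i T_i$ gives $\phi'\colon \mathbb{H}\to\prod_i\overline{T_i}$. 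Every torsion element of $\mathbb{H}$ has torsion components, hence lies in $\prod_i N_i$, so $\Tors(\mathbb{H})\subseteq\Ker\phi'$ and $\phi'$ descends to $\phi\colon\mathbb{H}/\Tors(\mathbb{H})\to\prod_i\overline{T_i}$. Geometrically $\phi$ is the map induced on $\pi_1$ by the natural finite morphism $(C_1\times\cdots\times C_k)/G\to D_1\times\cdots\times D_k$ of degree $\abs{G}^{k-1}$. I then set $T:=\Ima\phi$ and $E:=\Ker\phi$, so that $1\to E\to\mathbb{H}/\Tors(\mathbb{H})\to T\to 1$ is exact by construction.

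The right-hand term is the easy part. Since $\mathbb{H}$ is the fiber product of the surjections $T_i\to G$, it has index $\abs{G}^{k-1}$ in $\prod_i T_i$; as the surjection $\prod_i T_i\twoheadrightarrow\prod_i\overline{T_i}$ sends a finite-index subgroup to a finite-index subgroup, $T=\phi'(\mathbb{H})$ has finite index in $\prod_i\overline{T_i}$, a product of orbifold surface groups. This establishes the claim on $T$.

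It remains to prove that $E$ is finite, which is the heart of the matter. By construction $E\cong H'/\Tors(\mathbb{H})$, where $H':=\Ker\phi'=\mathbb{H}\cap\prod_i N_i$. Writing $K:=\pi_1(C_1)\times\cdots\times\pi_1(C_k)=\Ker(\mathbb{H}\to G)$, which is torsion-free, one obtains a short exact sequence $1\to M\to H'\to\overline{G}\to 1$, where $M:=H'\cap K=\prod_i\bigl(N_i\cap\pi_1(C_i)\bigr)$ and $\overline{G}:=\Ima(H'\to G)$ is a subgroup of $G$, hence finite. Since $\Tors(\mathbb{H})\subseteq H'$ is normal and $H'/M\cong\overline{G}$ is finite, proving that $M$ has finite image in $E$ --- equivalently that $M\cap\Tors(\mathbb{H})$ has finite index in $M$ --- will force $E$ to be finite, as an extension of a quotient of $\overline{G}$ by this finite image.

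The main obstacle is exactly this last point: producing enough elements of $\Tors(\mathbb{H})$ inside the torsion-free product $M$. The difficulty is that, although in each factor the torsion elements already generate $N_i$ as a subgroup (any conjugate of a branch generator is again such a conjugate, so the subgroup they generate is normal and equals $N_i$), a torsion element of the fiber product $\mathbb{H}$ must have all its components mapping to one and the same $g\in G$; thus only ``diagonal'' torsion is available. The plan here is to feed in commutators $[x,y]$ of diagonal torsion elements $x,y$ lying over a \emph{commuting} pair $g,h\in G$: such a commutator lies in $K$ and in $\Tors(\mathbb{H})$, and varying $x,y$ over the finitely many conjugacy classes of branch generators of each $T_i$ produces a subgroup of $M\cap\Tors(\mathbb{H})$ whose cokernel in $M$ I expect to be controlled by this finite conjugacy data. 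Carrying out this bounding --- showing that the diagonal torsion and their commutators exhaust $M$ up to finite index --- is the technical core and the step I anticipate being the most delicate; finiteness of the number of branch generators (automatic for curves) is what keeps the resulting cokernel, and hence $E$, finite.
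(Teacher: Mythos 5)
Your map is aimed at the wrong target, and as a result the kernel $E=\Ker\phi$ is infinite in general. The problem is visible already for $k=2$, $G=\mathbb{Z}/2$, acting on $C_1$ with fixed points and on $C_2$ freely. Then $T_2=\pi_1(C_2/G)$ is torsion-free, so any torsion element of $\mathbb{H}$ has trivial second component, hence its first component is a torsion element of the torsion-free group $\pi_1(C_1)$; thus $\Tors(\mathbb{H})=\{1\}$. On the other hand $\Ker\phi'=\mathbb{H}\cap\left(N_1\times\{1\}\right)=\left(N_1\cap\pi_1(C_1)\right)\times\{1\}$, and $N_1\cap\pi_1(C_1)$ is a nontrivial normal subgroup of a surface group (it contains the commutators $[c,\gamma]$ of a branch generator $c$ with $\gamma\in\pi_1(C_1)$), hence infinite. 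So your $E$ is infinite, while the Proposition still holds here with $E=\{1\}$ and $T=\mathbb{H}$ itself, which is already of finite index in $T_1\times T_2$. This also shows that the difficulty you flag at the end cannot be repaired: there you need to exhaust $M=\prod_i\left(N_i\cap\pi_1(C_i)\right)$ up to finite index by products of diagonal torsion, but there may be no nontrivial diagonal torsion at all while $M$ is infinite. Your commutator idea only produces $[\gamma,l]$ for $l$ a component of a genuine \emph{diagonal} torsion element, and these never reach the factors $N_i\cap\pi_1(C_i)$ coming from branch generators of $T_i$ that do not extend to torsion elements of the fiber product.

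The fix, which is what the paper does (in the guise of Theorem \ref{thm:Int2} and Section \ref{section:MainTheorem}), is to quotient each $T_i$ not by $N_i=\Tors(T_i)$ but by the smaller normal subgroup $N_i'$ generated by the $i$-th components of the torsion elements of $\mathbb{H}$, i.e.\ by the torsion of $T_i$ lying over the subgroup $I\leq G$ generated by elements having fixed points on \emph{every} $C_j$; geometrically $T_i/N_i'\cong\pi_1\left(\left[(C_i/I)/(G/I)\right]\right)$ (Proposition \ref{prop:Quotient}), which is commensurable with an orbifold surface group after killing a further finite kernel when the induced action is not faithful. With this target, the finite-index claim goes through exactly as you argue, and the finiteness of the kernel $\left(\mathbb{H}\cap\prod_i N_i'\right)/\Tors(\mathbb{H})$ is proved by a version of your commutator strategy that now does work: each generator $l_i$ of $N_i'$ extends by construction to a diagonal torsion element, so the commutators $[\gamma_i,l_i]$ lie in $\Tors(\mathbb{H})$, and modulo them $N_i'$ becomes a group generated by finitely many torsion elements with center of finite index, hence finite by the Bauer--Catanese lemma (see Lemmas \ref{lem:LiGeneratesNi}--\ref{lem:C_iHasFiniteIndexInN_i'}). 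The finiteness of $\abs{\pi_0(C_i^g)}$ is what makes the generating set finite, as you anticipated; what was missing is the restriction to the torsion that actually appears diagonally.
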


They finally use Proposition \ref{prop:Int1} and properties of the orbifold surface groups such as residually finiteness and cohomological goodness to construct a subgroup of $\mathbb{H}/\Tors \mathbb{H}$ intersecting $E$ trivially and satisfying the required properties.

Here, a more geometric approach is used via fundamental groups of stacks or orbispaces \cite{2005noohi,chen2001homotopy}. This theory permits to see $X\to [X/G]$ as a covering map under some conditions on $X$, where $[X/G]$ denotes the quotient stack, and a long exact sequence of homotopy groups is available. We will denote the fundamental group of the stack $[X/G]$ by $\pi_1([X/G])$.

For $i=1,\ldots,k$, let $X_i$ be a connected, locally path-connected and semi-locally simply connected topological space with an action of a finite group $G$, consider the diagonal action of $G$ in $X:=X_1\times \cdots\times X_k$ and denote by $I$ the subgroup generated by the elements having a fixed point in every $X_i$ for $i=1,\ldots,k$.
We can formulate now our first main Theorem.

\begin{thm}\label{thm:Int2} Let $X, X_1,\ldots,X_k$ and $G$ as above.  Suppose that the number of path connected components in  the fixed locus set $X_i^g$ of an element $g\in G$ is finite for every $g\in G$ and $i=1,\ldots,k$. Then there exists a homomorphism
$$\pi_1(X/G)\to \prod_{i=1}^k \pi_1\left([(X_i/I)\left/ (G/I)\right.]\right)$$
which has finite kernel and its image is a finite-index subgroup.
\end{thm}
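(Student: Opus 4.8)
The plan is to route everything through the fundamental group of the quotient stack and then reduce the statement to a group-theoretic comparison of fibre products, mirroring the strategy recalled above but carried out for arbitrary $X_i$. First I would compute $\pi_1([X/G])$. Since each $X_i$ is connected, locally path-connected and semi-locally simply connected, $X\to [X/G]$ is a covering of orbispaces with deck group $G$, so the long exact homotopy sequence gives an extension $1\to \pi_1(X)\to\pi_1([X/G])\to G\to 1$, and likewise $1\to\pi_1(X_i)\to T_i\to G\to 1$ with $T_i:=\pi_1([X_i/G])$. The equivariant projections $X\to X_i$ induce a morphism $\pi_1([X/G])\to \mathbb{H}:=T_1\times_G\cdots\times_G T_k$ which is the identity on $\pi_1(X)=\prod_i\pi_1(X_i)$ and on $G$; by the five lemma it is an isomorphism. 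Passing from the stack to its coarse space $X/G$ amounts (as in Subsection~\ref{ss:FundamentalQuoProd}) to killing the local monodromy around the fixed loci, so $\pi_1(X/G)\cong \mathbb{H}/\Tors(\mathbb{H})$, with $\Tors(\mathbb{H})$ the normal subgroup generated by the stabiliser elements. Because the action is diagonal, an element of $G$ has a fixed point in $X$ iff it has one in every $X_i$; hence every stabiliser element lies in $I$, the generating set of $I$ is stable under conjugation, $I\trianglelefteq G$, and $G/I$ makes sense.

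Next I would build the map of the statement. Writing $[X_i/G]=[[X_i/I]/(G/I)]$ and passing to the coarse space of $[X_i/I]$ yields a morphism $[X_i/G]\to [(X_i/I)/(G/I)]$, hence $T_i\to S_i:=\pi_1([(X_i/I)/(G/I)])$, compatible with $G\to G/I$. A stabiliser element $g$ of the $G$-action on $X$ lies in $\operatorname{Stab}_I(x_i)$ for suitable $x_i\in X_i$, so its canonical lift dies in $S_i$; therefore the composite $\mathbb{H}\hookrightarrow\prod_iT_i\to\prod_iS_i$ annihilates $\Tors(\mathbb{H})$ and descends to a homomorphism
$$\varphi\colon \pi_1(X/G)=\mathbb{H}/\Tors(\mathbb{H})\longrightarrow \prod_{i=1}^k S_i.$$
Its image lands in the fibre product $\mathbb{S}:=S_1\times_{G/I}\cdots\times_{G/I}S_k$, which has index dividing $|G/I|^{\,k-1}$ in $\prod_iS_i$; so it suffices to prove that $\varphi$ has finite kernel and finite-index image in $\mathbb{S}$.

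For this I would compare the two extensions with common quotient $G/I$,
$$1\to K_1\to \pi_1(X/G)\to G/I\to 1,\qquad 1\to K_2\to \mathbb{S}\to G/I\to 1,$$
where $K_2=\prod_i\pi_1(X_i/I)$. A snake-lemma argument reduces the finiteness of $\ker\varphi$ and of the cokernel of $\pi_1(X/G)\to\mathbb{S}$ to the same statements for $\varphi|_{K_1}\colon K_1\to K_2$. I would then identify $K_1$: the preimage of $I$ in $\mathbb{H}$ is the fibre product over $I$ of the groups $\pi_1([X_i/I])$, that is $\pi_1([X/I])$, and modding by $\Tors(\mathbb{H})$ (which already equals the normal closure of the stabilisers \emph{inside} this preimage) gives $K_1\cong\pi_1(X/I)$. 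Thus the theorem reduces to: the map $\pi_1(X/I)\to\prod_i\pi_1(X_i/I)$ induced by the projections has finite kernel and finite-index image. Finite index of the image is the easy half, since $X\to \prod_i(X_i/I)=X/I^{\,k}$ is a branched cover by the finite group $I^{\,k}$, so the image already contains the finite-index image of $\prod_i\pi_1(X_i)$.

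Finiteness of the kernel is the crux, and it is here that the hypothesis enters. I would adapt the group-theoretic argument behind Proposition~\ref{prop:Int1}: the finiteness of the number of path components of each $X_i^g$ guarantees that $\Tors(\mathbb{H})$ is the normal closure of finitely many torsion elements, and combined with the residual finiteness and cohomological goodness of the groups $S_i$ this forces the kernel of $\varphi$ to be finite. I expect this last step to be the main obstacle: one must control the torsion and establish finiteness of the kernel without the structure theory of surface groups available in the curve case, replacing those surface-group arguments by the intrinsic finiteness properties of the fundamental groups of the quotient stacks $[(X_i/I)/(G/I)]$.
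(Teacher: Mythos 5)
Your first half is essentially the paper's route and is sound: identifying $\pi_1([X/G])$ with the fibre product $\mathbb{H}$, identifying $\pi_1(X/G)$ with $\mathbb{H}$ modulo the normal closure $N$ of the stabilisers (Lemmas~\ref{lem:OrbGroupIsFiberProduct} and~\ref{lem:QuotientFixed}), constructing the homomorphism to $\prod_i\pi_1([(X_i/I)/(G/I)])$, and getting finite index of the image from the bound $[\prod_i T_i:\mathbb{H}]\le|G|^{k-1}$ are all steps the paper carries out (Proposition~\ref{prop:FiniteIndex}, Proposition~\ref{prop:Quotient}). Your reduction to the kernel of the comparison with $G/I$, i.e.\ to the map $\pi_1(X/I)\to\prod_i\pi_1(X_i/I)$, is also legitimate, but note it is not a simplification: it is exactly the original statement in the special case $G=I$, so the whole difficulty survives the reduction.

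The genuine gap is in the finite-kernel step, which you yourself flag as the main obstacle. Your proposed tools --- residual finiteness and cohomological goodness of the groups $S_i=\pi_1([(X_i/I)/(G/I)])$ --- are not available here: for arbitrary topological spaces $X_i$ these fundamental groups need not be residually finite or good, and indeed the paper only invokes those properties in Section~\ref{section:Applications}, for algebraic curves, and there only to prove residual finiteness of $\pi_1(C/G)$ as input to the \emph{second} main theorem; they play no role in the finite-kernel statement even in the curve case. What is actually needed is the following purely group-theoretic mechanism. The kernel of the induced map is $(N_1'\times_G\cdots\times_G N_k')/N$, where $N_i'$ is the image of $N$ under the $i$-th projection. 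The hypothesis $|\pi_0(X_i^g)|<\infty$ produces a finite set $L_i$ of torsion elements whose $\pi_1(X_i)$-conjugates generate $N_i'$ (Lemma~\ref{lem:PathConnectedComponents}, Proposition~\ref{prop:NiIsNormallyFinitelyGenerated}). One then introduces the normal subgroup $C_i$ generated by the commutators $[\gamma_i,l_i]$ with $\gamma_i\in\pi_1(X_i,x_i)$ and $l_i\in L_i$, and proves two things: first, $C_1\times\cdots\times C_k\le N$ (each such commutator is realised inside a conjugate of a diagonal stabiliser); second, $C_i$ has finite index in $N_i'$, because $N_i'/C_i$ is generated by the finitely many torsion elements $\hat L_i$ and has a central subgroup of finite index, hence is finite by the Bauer--Catanese--Grunewald--Pignatelli lemma on groups generated by finitely many torsion elements with centre of finite index. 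Sandwiching $N$ between $C_1\times\cdots\times C_k$ and $N_1'\times_G\cdots\times_G N_k'$ then gives the finiteness of the kernel (Lemma~\ref{lem:C_iHasFiniteIndexInN_i'}, Theorem~\ref{thm:FiniteKernel}). Without this commutator construction, or an equivalent substitute, your argument does not close.
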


This can be seen as a generalization of Proposition \ref{prop:Int1} (Bauer--Catanese--Grunewald--Pignatelli) by the remarks preceding the statement of the Proposition.

The action of $G/I$ on $X_i/I$ is induced by the action of $G$ in $X_i$. Note that if $k=1$ then $G/I$ can be seen to act freely on $X_1/I$ and $\pi_1([(X_1/I)/(G/I)])=\pi_1((X_1/I)/(G/I))$ but $(X_1/I)/(G/I)\cong X_1/G$. The same argument works if we make the product of the same topological space, which gives the following corollary.

\begin{cor}\label{cor:Int1} Let $X_i=X_1$ for $i=2,\ldots,k$ and $G$ satisfy the hypothesis of the above theorem. Then the homomorphism $\pi_1(X/G)\to \pi_1(X_1/G)^k$ has finite kernel and its image is a finite-index subgroup.
\end{cor}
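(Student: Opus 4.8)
The plan is to deduce Corollary~\ref{cor:Int1} directly from Theorem~\ref{thm:Int2} by specializing to the case where all the factors coincide, $X_i = X_1$, and then identifying the target product appearing in the theorem with the $k$-th power of the single space $\pi_1(X_1/G)$. The key observation, already signaled in the remarks following Theorem~\ref{thm:Int2}, is the chain of identifications that collapses the orbispace quotient down to an ordinary topological quotient. So the first step I would carry out is to record, once and for all, the homeomorphism $(X_1/I)/(G/I)\cong X_1/G$, which follows from the general fact that for a normal subgroup $I\trianglelefteq G$ one has a canonical homeomorphism of successive quotients; here $I$ is the subgroup generated by elements with a fixed point in every factor, and because all factors are equal $X_1$, the condition of having a fixed point in every $X_i$ is simply the condition of having a fixed point in $X_1$.

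The second step is to justify that the quotient stack $[(X_1/I)/(G/I)]$ is representable by an honest space, i.e.\ that $\pi_1\left([(X_1/I)/(G/I)]\right)=\pi_1\left((X_1/I)/(G/I)\right)$. For this I would verify that $G/I$ acts \emph{freely} on $X_1/I$: by the very definition of $I$, any element of $G$ fixing a point of $X_1$ already lies in $I$, so no nontrivial coset in $G/I$ can fix a point of the quotient $X_1/I$. Freeness means the quotient stack coincides with the topological quotient, so its fundamental group is the ordinary one. Combining this with the homeomorphism from the first step yields
\[
\pi_1\left([(X_1/I)/(G/I)]\right)\;\cong\;\pi_1\left((X_1/I)/(G/I)\right)\;\cong\;\pi_1(X_1/G)
\]
for each index $i$, so the product target in Theorem~\ref{thm:Int2} becomes $\prod_{i=1}^k \pi_1(X_1/G)=\pi_1(X_1/G)^k$.

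The final step is simply to substitute these identifications into the statement of Theorem~\ref{thm:Int2}: the homomorphism produced there, $\pi_1(X/G)\to \prod_{i=1}^k \pi_1\left([(X_i/I)/(G/I)]\right)$, becomes, after the identification, a homomorphism $\pi_1(X/G)\to \pi_1(X_1/G)^k$ having finite kernel and finite-index image, which is exactly the assertion of the corollary. I expect the main (indeed only) obstacle to be the careful verification of the group-theoretic claim that $G/I$ acts freely on $X_1/I$ and the compatibility of that freeness with the identification $(X_1/I)/(G/I)\cong X_1/G$ — in particular confirming that the relevant fixed-point condition collapses correctly when all factors are identical, so that $I$ is genuinely the subgroup of elements fixing a point of $X_1$. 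Everything else is a direct transcription of Theorem~\ref{thm:Int2}.
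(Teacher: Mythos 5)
Your proposal is correct and follows essentially the same route as the paper: the author likewise deduces the corollary from Theorem~\ref{thm:Int2} by observing that when all factors equal $X_1$ the subgroup $I$ is generated by the elements fixing a point of $X_1$, that $G/I$ then acts freely on $X_1/I$ so the quotient stack is an honest space, and that $(X_1/I)/(G/I)\cong X_1/G$. (The paper also records an alternative argument via $N_1'=N_1$ in Section~\ref{section:Applications}, but the identification you use is exactly the one given in the remark preceding the corollary and in the ``another proof'' there.)
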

An important case of Theorem \ref{thm:Int2} and Corollary \ref{cor:Int1} is when $X_i$ is a smooth complex algebraic variety for $i=1,\ldots, k$. Indeed, the fundamental group of a variety with quotient singularities is the fundamental group of a smooth variety.

Our second main Theorem generalizes Theorem \ref{thm:Int1} (Bauer--Catanese--Grunewald--Pignatelli). It can be stated without using the language of stacks or orbispaces.
\begin{thm} \label{thm:SecondMain}
Let $X, X_1,\ldots X_k$ and $G$ satisfy the hypothesis of Theorem \ref{thm:Int2}. Suppose that $\pi_1(X/G)$ is residually finite. Then there exists a normal finite-index subgroup $\mathcal{N}\lhd\pi_1(X/G)$ isomorphic to a product:
\[\mathcal{N}\cong \prod_{i=1}^k H_i.\]
with $H_i\lhd \pi_1(X_i/I)$ normal finite-index subgroups.
\end{thm}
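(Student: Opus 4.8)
The plan is to deduce Theorem \ref{thm:SecondMain} from Theorem \ref{thm:Int2} by combining the finite-kernel/finite-image map with the residual finiteness hypothesis, essentially mimicking the final step of the Bauer--Catanese--Grunewald--Pignatelli argument but now in the generality afforded by the orbispace approach. Write $\varphi\colon \pi_1(X/G)\to \prod_{i=1}^k \pi_1\left([(X_i/I)/(G/I)]\right)$ for the homomorphism of Theorem \ref{thm:Int2}, with finite kernel $E:=\Ker\varphi$ and image $\Ima\varphi$ of finite index in the product. The first task is to identify the target factors geometrically: since $G/I$ acts on $X_i/I$ with the crucial property that it acts \emph{freely} in codimension one (the elements acting with fixed loci have been killed by quotienting out $I$), the orbifold fundamental group $\pi_1([(X_i/I)/(G/I)])$ is commensurable with $\pi_1(X_i/I)$. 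Concretely I expect a short exact sequence $1\to \pi_1(X_i/I)\to \pi_1([(X_i/I)/(G/I)])\to G/I\to 1$, so that $\pi_1(X_i/I)$ sits as a normal finite-index subgroup of the $i$-th factor.

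Next I would produce $\mathcal{N}$ as the preimage under $\varphi$ of a suitable finite-index subgroup of the product that avoids the finite kernel $E$. The key leverage is residual finiteness of $\pi_1(X/G)$: since $E$ is finite, for each nontrivial $e\in E$ I can choose a finite-index normal subgroup of $\pi_1(X/G)$ not containing $e$, and intersecting these finitely many subgroups yields a finite-index normal subgroup $K\lhd \pi_1(X/G)$ with $K\cap E=\{1\}$. Restricting $\varphi$ to $K$ is then injective, so $K\cong \varphi(K)$ embeds as a finite-index subgroup of $\prod_i \pi_1([(X_i/I)/(G/I)])$. I would then refine: intersect $\varphi(K)$ with the product $\prod_i \pi_1(X_i/I)$ of the normal finite-index subgroups identified above, and pass to a further finite-index normal subgroup so that the resulting group $\mathcal{N}\lhd \pi_1(X/G)$ maps isomorphically onto a subgroup $M\le \prod_i \pi_1(X_i/I)$ that is itself normal and of finite index in the product.

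The main obstacle is the final step: arranging that $M$ splits as an honest \emph{product} $\prod_i H_i$ with $H_i\lhd \pi_1(X_i/I)$, rather than merely being an abstract finite-index subgroup of the product that could be a ``twisted'' or graph-like subgroup. A finite-index subgroup of a direct product need not respect the product decomposition. To force the splitting I would exploit Goursat-type structure together with the goodness/residual properties of $\pi_1(X_i/I)$: set $H_i$ to be the intersection of $M$ with the $i$-th factor $\pi_1(X_i/I)$, viewed inside the product, so that each $H_i\lhd \pi_1(X_i/I)$ is normal; the content is to show $\prod_i H_i$ still has finite index in $M$ (hence in the product), which follows because the projections of $M$ to each factor are finite-index and the ``diagonal correction'' terms are controlled by finite quotients. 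Replacing $\mathcal{N}$ by $\varphi^{-1}\!\left(\prod_i H_i\right)\cap \mathcal{N}$ and passing once more to a finite-index normal subgroup then delivers the claimed isomorphism $\mathcal{N}\cong \prod_{i=1}^k H_i$. I anticipate that verifying normality of $\mathcal{N}$ in $\pi_1(X/G)$ throughout these successive intersections, while keeping the product structure intact, is the delicate bookkeeping that carries the weight of the proof.
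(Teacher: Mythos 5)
Your proposal follows essentially the same route as the paper: take the map $\Theta$ of Theorem \ref{thm:Int2} with finite kernel $E$, use residual finiteness to find a finite-index normal $\Gamma\lhd\pi_1(X/G)$ with $\Gamma\cap E=\{1\}$, identify $\pi_1(X_i/I)$ as a normal finite-index subgroup of $\pi_1([(X_i/I)/(G/I)])$ via the fiber homotopy exact sequence, intersect the image of $\Gamma$ with each factor, and pull back a product of the resulting subgroups. The one place where you leave a genuine loose end --- the ``delicate bookkeeping'' of keeping $\mathcal{N}$ normal while preserving the product structure, which you propose to attack with Goursat-type arguments and goodness --- is dispatched in the paper by a simpler choice that you should adopt: since $\Theta(\Gamma)_i:=\Theta(\Gamma)\cap\bigl(\{e\}\times\cdots\times\pi_1(X_i/I)\times\cdots\times\{e\}\bigr)$ has finite index in $\pi_1(X_i/I)$ (the index of an intersection with a direct factor is bounded by the index of $\Theta(\Gamma)$ in the full product, so no Goursat analysis is needed), one may take $H_i$ to be a finite-index subgroup contained in $\Theta(\Gamma)_i$ that is normal in the \emph{whole} group $\pi_1([(X_i/I)/(G/I)])$, e.g.\ the normal core. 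Then $H:=\prod_i H_i$ is normal and of finite index in $\prod_i\pi_1([(X_i/I)/(G/I)])$, so $\mathcal{N}:=\Theta^{-1}(H)\cap\Gamma$ is automatically normal and of finite index in $\pi_1(X/G)$, and $\Theta|_\Gamma$ being injective with $H\subset\Theta(\Gamma)$ gives $\mathcal{N}\cong H$ directly; normality of $H_i$ in $\pi_1(X_i/I)$ follows a fortiori. In particular, goodness plays no role in this theorem (it is only used later, in Lemma \ref{lem:Ci'IsResiduallyFinite}, to \emph{verify} residual finiteness in the curve case), and your worry that a finite-index subgroup of a product need not split is circumvented because one never needs to split a given subgroup: one constructs the product subgroup $\prod_i H_i$ from scratch inside $\Theta(\Gamma)$.
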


As a corollary, following closely the arguments used in \cite{CataneseBauer1}, we show that for smooth curves $C_1,\ldots, C_k$ the group $\pi_1((C_1\times \cdots\times
 C_k)/G)$ is residually finite. Hence, we have that Theorem \ref{thm:Int1} (Bauer--Catanese--Grunewald--Pignatelli, Dedieu--Perroni) is valid in the case when the curve is non-necessarily compact. 

\begin{cor}\label{cor:Int3} Let $C_1,\ldots,C_k$ be smooth algebraic curves and let $G$ be a finite group acting on each of them. Then there exists a normal subgroup $\mathcal{N}\lhd\pi_1((C_1\times \cdots\times C_k)/G)$ of finite index, isomorphic to a product $\Pi_1\times \cdots\times \Pi_k$, where $\Pi_j$ is either the fundamental group of a smooth projective curve or a free group of finite rank.
\end{cor}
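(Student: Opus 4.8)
The plan is to derive Corollary \ref{cor:Int3} from Theorem \ref{thm:SecondMain} in two moves: first establish the residual finiteness hypothesis needed to invoke the theorem, and then identify the finite-index normal subgroups $H_i \lhd \pi_1(C_i/I)$ concretely as fundamental groups of curves or free groups.

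The first step, and the one I expect to be the main obstacle, is verifying that $\pi_1((C_1\times\cdots\times C_k)/G)$ is residually finite so that Theorem \ref{thm:SecondMain} applies. The excerpt announces this is done ``following closely the arguments used in \cite{CataneseBauer1}.'' The strategy there is to realize $\pi_1$ of the quotient, via the machinery recalled in the introduction, as a quotient $\mathbb{H}/\Tors(\mathbb{H})$ of a fiber product of orbifold surface groups $T_i = T(C_i/G)$, and then to exploit the good group-theoretic properties of orbifold surface groups---crucially their residual finiteness and their cohomological goodness in the sense of Serre. The delicate point is that residual finiteness is not automatically inherited by a quotient: one must show that the torsion subgroup $\Tors(\mathbb{H})$ is exactly the intersection of the kernels of all maps to finite groups, or equivalently build, for each nontrivial element, a finite quotient detecting it. For \emph{non-compact} curves the orbifold groups $T_i$ are (finite extensions of) free groups rather than surface groups, which must be accommodated in the argument. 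I would therefore reduce to the faithful case as in \cite{dedieu} by passing to $G/I$, apply Proposition \ref{prop:Int1} to get the extension $1\to E\to \mathbb{H}/\Tors\mathbb{H}\to T\to 1$ with $E$ finite and $T$ of finite index in a product of orbifold groups, and then use that an extension of a residually finite group by a finite group, together with goodness controlling the relevant cohomology class, remains residually finite.

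With residual finiteness in hand, Theorem \ref{thm:SecondMain} immediately yields a normal finite-index subgroup $\mathcal{N}\cong\prod_{i=1}^k H_i$ with each $H_i \lhd \pi_1(C_i/I)$ of finite index. The second step is to identify these factors. Here $I \lhd G$ is the subgroup generated by elements fixing a point in every $C_i$, so $C_i/I$ is again a smooth algebraic curve, and $H_i$ is a finite-index subgroup of its fundamental group. A finite-index subgroup of $\pi_1$ of a curve corresponds to a finite (possibly branched, but here unramified over the chosen subgroup) covering curve $\widetilde{C_i} \to C_i/I$, so $H_i \cong \pi_1(\widetilde{C_i})$. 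The final classification is then purely topological: a smooth algebraic curve is either compact, in which case $\pi_1$ is the fundamental group of a smooth projective curve, or non-compact (affine), in which case it is homotopy equivalent to a wedge of circles and $\pi_1$ is a free group of finite rank. Thus each $\Pi_j := H_j$ is of the required form, completing the proof.

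I would close by remarking that the only genuinely new input beyond \cite{CataneseBauer1} is handling the non-compact curves: all the arithmetic-group-theoretic scaffolding (residual finiteness, goodness, the fiber-product description) is already in place from the cited work and from Proposition \ref{prop:Int1}, and the extension to affine curves is accommodated because free groups of finite rank are themselves residually finite and good, so they slot into the same inductive and cohomological arguments without modification.
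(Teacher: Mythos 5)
Your overall architecture matches the paper's: prove residual finiteness of $\pi_1((C_1\times\cdots\times C_k)/G)$ via a finite-kernel surjection onto a residually finite \emph{good} group (the key input being \cite[Proposition~6.1]{grunewald2008}), then feed that into Theorem \ref{thm:SecondMain} and identify the resulting finite-index subgroups $H_i\lhd\pi_1(C_i/I)$ as surface groups or free groups of finite rank. Your second step is exactly the paper's and is fine. The difference lies in how you produce the finite-kernel map. The paper uses its own Theorem \ref{thm:Int2} to get $\pi_1(C/G)\to\prod_i\pi_1([(C_i/I)/(G/I)])$ with finite kernel, observes that $\pi_1([(C_i/I)/(G/I)])$ need not be an orbifold surface group when $G/I$ does not act faithfully on $C_i/I$, and therefore composes with a further finite-kernel surjection $q_i:\pi_1([(C_i/I)/(G/I)])\to\pi_1([(C_i/I)/H_i])$, where $H_i=(G/I)/\ker(G/I\to\Aut(C_i/I))$, so that the target is an honest product of orbifold curve groups, which are residually finite and good. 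You instead route through the fiber product $\mathbb{H}$, the identification $\pi_1(C/G)\cong\mathbb{H}/\Tors(\mathbb{H})$, and Proposition \ref{prop:Int1}.

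That choice is where the gap sits. Proposition \ref{prop:Int1} and the identification of $\pi_1(C/G)$ with $\mathbb{H}/\Tors(\mathbb{H})$ are established in \cite{CataneseBauer1,dedieu} only for \emph{projective} curves, and the whole point of this corollary is the non-compact case; you acknowledge that the non-compact orbifold groups ``must be accommodated'' but do not carry this out, and for affine curves one would additionally have to re-justify that the torsion of $\mathbb{H}$ coincides with the subgroup generated by elements fixing points in the universal cover (the paper sidesteps this entirely by working with fixed-point subgroups rather than torsion, via Lemma \ref{lem:QuotientFixed}). A second, smaller imprecision: passing to $G/I$ does \emph{not} reduce to a faithful action on $C_i/I$ in general --- this is precisely the issue the paper flags and repairs with the extra quotient by $K_i$ --- so ``reduce to the faithful case by passing to $G/I$'' is not by itself a valid reduction. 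Both problems are repairable with material already in the paper: replace the appeal to Proposition \ref{prop:Int1} by the composite map $\Theta$ built from Theorem \ref{thm:Int2} and the lemma on $q_i$, and then your goodness-plus-residual-finiteness argument (which is exactly Lemma \ref{lem:Ci'IsResiduallyFinite}) goes through verbatim, with the observation that the relevant orbifold curve groups in the affine case are virtually free, hence still residually finite and good.
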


The paper is organized as follows: in Section \ref{section:Preliminaries} preliminary results are given. Then the first main Theorem is proved in Section \ref{section:MainTheorem} and the proof of the second main Theorem together with some applications are given in Section \ref{section:Applications}.

\section{Preliminaries}\label{section:Preliminaries}
\subsection{Properties of fundamental group of topological stacks}\label{ss:TopologicalStacks}
Let $X$ be a connected, semi-locally simply connected and locally path-connected topological space and $G$ a finite group acting continuously on it. 
\subsubsection{Fiber homotopy exact sequence}\label{ss:FiberHomotopyES} There exists a homotopy theory for stacks and the existence of the long exact sequence of homotopy, see \cite{NOOHI2014612}, is more general than what follows, however we only need the following case: consider the topological stack $\mathcal{X}=[X/G]$, a point $x\in X$ and denote by $\bar{x}\in \mathcal{X}$ the image of $x$. We have an associated fibration $G\to X\to \mathcal{X} $ and a long exact sequence of homotopy groups, 
$$\cdots\to\pi_{n+1}(\mathcal{X},\bar{x})\to \pi_n(G,Id)\to \pi_n(X,x)\to \pi_n (\mathcal{X},\bar{x}) \to \pi_{n-1}(G,\mbox{Id})\to\cdots $$
the map $\pi_n(G,Id)\to \pi_n(X,x)$ is induced by the orbit $G\cdot x \hookrightarrow X$.

\subsubsection{Action on the universal cover}
The hypothesis made on $X$ ensures that there exists an universal cover $\tilde{X}$ and moreover, if we let $\mathcal{X}=[X/G]$ as in \S \ref{ss:FiberHomotopyES}, we have an action of $\pi_1(\mathcal{X},\bar{x})$ on $\tilde{X}$ (see \S \ref{ss:DescriptionOfOrbispaces}). We will use several times the following lemma in what follows.
\begin{lem}\label{lem:InertiaIsInjective} Consider the action of $\pi_1(\mathcal{X},\bar{x})$ in $\tilde{X}$, let $y\in \tilde{X}$ and denote by $I_y$ the isotropy group of the action. Then there exists a monomorphism $I_y\to G$.
\end{lem}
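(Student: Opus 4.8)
The plan is to realize the map $I_y\to G$ as the restriction to $I_y$ of the canonical surjection $p\colon\pi_1(\mathcal{X},\bar x)\to G$ furnished by the fiber homotopy exact sequence, and then to prove that this restriction is injective by showing that $I_y$ meets $\ker p$ trivially. The whole argument reduces to the freeness of the deck action of $\pi_1(X)$ on $\tilde X$.

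First I would read off a short exact sequence from the long exact sequence of \S\ref{ss:FiberHomotopyES} associated to the fibration $G\to X\to\mathcal{X}$. Since $G$ is finite, hence discrete, we have $\pi_1(G,\mathrm{Id})=1$ and $\pi_0(G,\mathrm{Id})=G$, and since $X$ is connected, $\pi_0(X,x)=\ast$. The relevant segment
\[
1=\pi_1(G,\mathrm{Id})\to\pi_1(X,x)\to\pi_1(\mathcal{X},\bar x)\xrightarrow{\ p\ }G\to\pi_0(X,x)=\ast
\]
then collapses to a short exact sequence of groups
\[
1\to\pi_1(X,x)\to\pi_1(\mathcal{X},\bar x)\xrightarrow{\ p\ }G\to 1,
\]
so that $p$ is surjective (using $\pi_0(X,x)=\ast$) with kernel the image of $\pi_1(X,x)$.

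Next I would identify the action of the subgroup $\ker p=\pi_1(X,x)$ on $\tilde X$ with the deck transformation action of $\pi_1(X,x)$ on the universal cover $\tilde X\to X$. This is exactly where the construction of the $\pi_1(\mathcal{X},\bar x)$-action on $\tilde X$ recalled in \S\ref{ss:DescriptionOfOrbispaces} enters: the intermediate cover $\tilde X\to X$ corresponds to the subgroup $\ker p\le\pi_1(\mathcal{X},\bar x)$, so its deck group is precisely $\pi_1(X,x)$ acting in the standard way, and in particular this action is \emph{free}. Granting this, the conclusion is immediate: for $y\in\tilde X$ with isotropy $I_y\le\pi_1(\mathcal{X},\bar x)$, the kernel of $p|_{I_y}\colon I_y\to G$ is $I_y\cap\ker p=I_y\cap\pi_1(X,x)$, which consists of elements of $\pi_1(X,x)$ fixing $y$; by freeness this intersection is trivial, so $p|_{I_y}$ is the desired monomorphism.

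The only step that is not pure formal manipulation — and hence the point to treat with care — is the identification in the previous paragraph: that the restriction to $\ker p$ of the $\pi_1(\mathcal{X},\bar x)$-action on $\tilde X$ coincides with the free deck action of $\pi_1(X,x)$ on the universal cover of $X$. Everything else is a direct reading of the long exact sequence together with the elementary fact that a group acts freely by deck transformations on a universal cover.
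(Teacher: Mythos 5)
Your proposal is correct and follows essentially the same route as the paper: the paper's proof likewise extracts the short exact sequence $1\to\pi_1(X,x)\to\pi_1(\mathcal{X},\bar x)\to G\to 1$ from \S\ref{ss:FiberHomotopyES} and concludes injectivity on $I_y$ from the freeness of the $\pi_1(X,x)$-action on $\tilde X$. The only difference is that you spell out the identification of $\ker p$ with the freely acting deck group, a step the paper asserts in one clause.
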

\begin{proof}
By \S \ref{ss:FiberHomotopyES} we obtain a short exact sequence 
$$1\to \pi_1(X,x)\to \pi_1(\mathcal{X},\bar{x})\to G\to 1, $$ 
as the action of $\pi_1(X,x)$ on $\tilde{X}$ is free, we obtain that the restriction of $\pi_1(\mathcal{X},\bar{x})\to G$ to $I_y$ is injective.
\end{proof}

\subsection{Product of topological spaces}
\label{subsection:Product of Curves}
\subsubsection{Fundamental group of the quotient of a product}\label{ss:FundamentalQuoProd}

For $i=1,\ldots,k$ let $X_i$ as in \S \ref{ss:TopologicalStacks} be a connected, semi-locally simply connected and locally path-connected topological space and $G$ a finite group acting on each of them. 

By \S \ref{ss:FiberHomotopyES} we have $k$ exact sequences 

\begin{equation}\label{eq:OrbExSeq1}
1\to \pi_1(X_i,x_i)\to \pi_1(\mathcal{X}_i,\bar{x}_i)\overset{\varphi_i}{\to} (G,\mbox{Id})\to 1
\end{equation}
where $\mathcal{X}_i=[X_i/G]$, $x_i\in X_i$ and its image in $\mathcal{X}_i$ is denoted by $\bar{x}_i$.

Denote by $\mathbb{H}:=\pi_1(\mathcal{X}_1,x_1)\times_G\cdots\times_G\pi_k(\mathcal{X}_k,x_k)$. The exact sequences in (\ref{eq:OrbExSeq1}) can be assembled as follows
\begin{equation} \label{eq:orbexseq3}
1\to \pi_1(X_1\times \cdots\times X_k,x)\to \mathbb{H}\to G\to 1 
\end{equation}
 with $x=(x_1,\ldots,x_k)$. The geometric nature of $\mathbb{H}$ is shown in the following lemma.
\begin{lem}\label{lem:OrbGroupIsFiberProduct} Let $G$ act diagonally on $X=X_1\times\cdots\times X_k$. Consider the stack $\mathcal{X}=[X/G]$ then  $\pi_1(\mathcal{X},\bar{x})\cong \mathbb{H}$.
\end{lem}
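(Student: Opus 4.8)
The plan is to realize both $\pi_1(\mathcal{X},\bar{x})$ and $\mathbb{H}$ as extensions of $G$ by $\pi_1(X,x)=\pi_1(X_1,x_1)\times\cdots\times\pi_1(X_k,x_k)$, construct a morphism between these two extensions which is the identity on $G$ and the canonical isomorphism on the kernels, and then conclude with the short five lemma. The top extension is the one coming from the fibration $G\to X\to\mathcal{X}$ of \S\ref{ss:FiberHomotopyES}, and the bottom one is \eqref{eq:orbexseq3}.

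To build the morphism, note that since $G$ acts diagonally, each projection $p_i\colon X\to X_i$ is $G$-equivariant, hence induces a morphism of quotient stacks $\bar{p}_i\colon\mathcal{X}\to\mathcal{X}_i$ with $\bar{p}_i(\bar{x})=\bar{x}_i$, and therefore a homomorphism $\bar{p}_{i*}\colon\pi_1(\mathcal{X},\bar{x})\to\pi_1(\mathcal{X}_i,\bar{x}_i)$. I assemble these into
\[ \Phi=(\bar{p}_{1*},\ldots,\bar{p}_{k*})\colon \pi_1(\mathcal{X},\bar{x})\to \prod_{i=1}^k\pi_1(\mathcal{X}_i,\bar{x}_i). \]
The heart of the argument is naturality of the fiber homotopy exact sequence. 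Each $p_i$ fits into a commutative diagram of fibrations in which the induced map on fibers $G\to G$ is the identity (the $G$-torsor $X\to\mathcal{X}$ maps to the $G$-torsor $X_i\to\mathcal{X}_i$ compatibly with the $G$-action). Applying \cite{NOOHI2014612} functorially, the induced map of long exact sequences commutes; in degree one this yields $\varphi_i\circ\bar{p}_{i*}=\varphi$ for every $i$, where $\varphi\colon\pi_1(\mathcal{X},\bar{x})\to G$ is the connecting map for $\mathcal{X}$. Consequently, for each $\gamma$ the components $\varphi_i(\bar{p}_{i*}\gamma)$ all agree, so $\Phi$ factors through the fiber product $\mathbb{H}$, giving $\Phi\colon\pi_1(\mathcal{X},\bar{x})\to\mathbb{H}$.

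The same naturality shows that $\Phi$ is a morphism of the two extensions: it is the identity on the quotient $G$ (since $\varphi_i\circ\bar{p}_{i*}=\varphi$ matches the projection $\mathbb{H}\to G$), and on the kernels it restricts to the map $\pi_1(X,x)\to\prod_i\pi_1(X_i,x_i)$ induced by the projections, which is precisely the canonical isomorphism expressing the fundamental group of a product as the product of the fundamental groups. Since the two outer vertical maps are isomorphisms and the rows are exact, the short five lemma forces $\Phi$ to be an isomorphism, proving $\pi_1(\mathcal{X},\bar{x})\cong\mathbb{H}$.

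I expect the main obstacle to be the careful verification that the fiber map $G\to G$ is the identity and that the long exact sequences are genuinely natural in the $G$-equivariant map $p_i$ --- that is, pinning down the stacky functoriality from \cite{NOOHI2014612} at the level of basepoints and of the connecting homomorphism, so that the equality $\varphi_i\circ\bar{p}_{i*}=\varphi$ holds on the nose rather than merely up to conjugation. Once this compatibility is secured, the remainder of the argument is formal.
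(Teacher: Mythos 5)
Your proof is correct and takes essentially the same route as the paper: the paper likewise uses the $G$-equivariant projections to induce maps $\pi_1(\mathcal{X},\bar{x})\to\pi_1(\mathcal{X}_i,\bar{x}_i)$, invokes the universal property of the fiber product to land in $\mathbb{H}$, and then compares the two extensions of $G$ by $\pi_1(X_1\times\cdots\times X_k,x)$ via the five lemma. Your added care about the naturality of the fiber homotopy exact sequence (so that $\varphi_i\circ\bar{p}_{i*}=\varphi$ on the nose) is exactly the point the paper leaves implicit.
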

\begin{proof}
We have natural projection maps $\mathcal{X}\to \mathcal{X}_i$ for $i=1,\ldots,k$, which together with the morphisms $\varphi_i:\pi_1(\mathcal{X}_i,\bar{x}_i)\to G$ and the universal property of the fiber product give us a morphism $\pi_1(\mathcal{X},\bar{x})\to \mathbb{H}$. By the exact sequence of a fibration \S \ref{ss:FiberHomotopyES} applied to the action of $G$ to $X_1\times\cdots\times X_k$ and by (\ref{eq:orbexseq3}) we obtain 
\[
\begin{tikzcd}
1  \arrow[r] & \pi_1(X_1\times  \cdots \times X_k,x) \arrow[d,"\mathrm{id}"] \arrow[r] & \pi_1(\mathcal{X},\bar{x}) \arrow[d] \arrow[r] & G \arrow[r] \arrow[d,"\mathrm{id}"] & 1 \\
1 \arrow[r] & \pi_1(X_1\times  \cdots \times X_k,x) \arrow[r] & \mathbb{H} \arrow[r] & G \arrow[r]& 1
\end{tikzcd}
\]
which implies the result.
\end{proof}

\begin{lem} \label{lem:QuotientFixed} Let $X,X_i$ and $G$ be as above. Then $$\pi_1(X/G  ,[x]) \cong \pi_1(\mathcal{X},\bar{x})/N \cong \pi_1(\mathcal{X},\bar{x})/\mathbf{I} $$ 
where $N$ is the normal subgroup generated by the image of the inertia groups $I_x$ and $\mathbf{I}$ is the subgroup generated by the elements of $\pi_1(\mathcal{X})$ having  fixed points in the universal cover of $X_1\times \cdots \times X_k$.
\end{lem}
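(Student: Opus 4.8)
The plan is to realize $X/G$ as the quotient of the universal cover $\tilde{X}$ of $X=X_1\times\cdots\times X_k$ by the action of $\Gamma:=\pi_1(\mathcal{X},\bar{x})$ described in the paragraph on the action on the universal cover, and then to invoke Armstrong's theorem on fundamental groups of orbit spaces of discontinuous group actions. First I would record that the extension $1\to \pi_1(X,x)\to \Gamma\to G\to 1$ exhibits $\pi_1(X,x)$ as the deck transformation group of $\tilde{X}\to X$, sitting normally inside $\Gamma$ with quotient the finite group $G$. Consequently $\tilde{X}/\pi_1(X,x)=X$ and hence $\tilde{X}/\Gamma=X/G$ as topological spaces, compatibly with base points.

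Next I would apply Armstrong's theorem. Since $\pi_1(X,x)$ acts on $\tilde{X}$ as the deck group of a covering, hence as a covering (in particular properly discontinuous) action, and $G=\Gamma/\pi_1(X,x)$ is finite, the action of $\Gamma$ on $\tilde{X}$ is properly discontinuous. Armstrong's theorem then yields $\pi_1(\tilde{X}/\Gamma,[x])\cong \Gamma/\mathbf{I}$, where $\mathbf{I}$ is the subgroup generated by those elements of $\Gamma$ that fix some point of $\tilde{X}$. Combined with the identification $\tilde{X}/\Gamma=X/G$ this gives the second isomorphism $\pi_1(X/G,[x])\cong \pi_1(\mathcal{X},\bar{x})/\mathbf{I}$.

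It then remains to identify $N$ with $\mathbf{I}$. The elements of $\Gamma$ having a fixed point in $\tilde{X}$ are exactly the union $\bigcup_{y\in\tilde{X}} I_y$ of the isotropy groups of the action, and Lemma \ref{lem:InertiaIsInjective} shows that each $I_y$ embeds into $G$. Because $\delta I_y\delta^{-1}=I_{\delta y}$ for every $\delta\in\Gamma$, this union is stable under conjugation, so the subgroup $\mathbf{I}$ it generates is automatically normal. As $\mathbf{I}$ is normal and contains every inertia group, it contains their normal closure $N$; conversely $N$ is normal and contains every inertia group together with all of its conjugates, that is, all the $I_y$, hence contains $\langle\bigcup_y I_y\rangle=\mathbf{I}$. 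Thus $N=\mathbf{I}$, which yields the first isomorphism.

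The main obstacle is the application of Armstrong's theorem in the present generality: its classical statement is phrased for discontinuous actions on simply connected, locally compact, locally connected metric spaces, whereas here $\tilde{X}$ is only assumed connected, simply connected and locally path-connected. I would therefore either verify that the ingredients of the covering-space argument underlying Armstrong's proof (the existence of suitably small invariant neighborhoods around fixed points, so that loops can be pushed off the singular set) are guaranteed by proper discontinuity together with the finiteness of $G$, or else reprove the statement directly in this setting: given a loop in $\tilde{X}/\Gamma$, lift it to a path in $\tilde{X}$ whose endpoints differ by an element of $\Gamma$, and show that nullhomotopy of the loop forces that element into $\mathbf{I}$. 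Checking that the finite isotropy allows one to contract the small loops encircling fixed points is the delicate point.
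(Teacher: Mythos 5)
Your route is genuinely different from the paper's: the paper obtains both isomorphisms by citing Noohi's results on fundamental groups of topological stacks --- \cite[Theorem~8.3(i)]{noohi2008} for $\pi_1(X/G,[x])\cong\pi_1(\mathcal{X},\bar{x})/N$, and \cite[Theorem~9.1]{noohi2008} for the quotient by $\mathbf{I}$, the slice property required there being supplied by the finiteness of the stabilizers (Lemma~\ref{lem:InertiaIsInjective}). You instead pass to the coarse space $\tilde{X}/\Gamma$ and invoke Armstrong's theorem. The problem is that the step carrying all the weight --- Armstrong's theorem for the $\Gamma$-action on $\tilde{X}$ --- is precisely the step you leave open. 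The published versions of that theorem are proved for discontinuous actions on simply connected spaces satisfying additional local hypotheses (local compactness and metrizability in the original form, and further local conditions in later refinements), none of which are among the standing assumptions here: $\tilde{X}$ is only the universal cover of a connected, locally path-connected, semi-locally simply connected space. You correctly flag this as the delicate point, but you neither verify Armstrong's hypotheses nor carry out the direct lifting argument you sketch (which is essentially what \cite[Theorem~9.1]{noohi2008} packages under the slice property). As written, both isomorphisms of the lemma therefore rest on an unestablished step, so the proposal has a genuine gap.

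A second, smaller issue is the identification $N=\mathbf{I}$. In the statement, $N$ is the normal subgroup generated by the \emph{images} in $\pi_1(\mathcal{X},\bar{x})$ of the inertia groups of the stack $[X/G]$, that is, of the stabilizers $G_y\le G$ of points $y\in X$ transported along paths as in \S\ref{ss:DescriptionOfOrbispaces}; it is not defined as the normal closure of the isotropy groups of the $\Gamma$-action on $\tilde{X}$. That these two families of subgroups coincide is true but not a tautology --- it is the content of Lemma~\ref{lem:everyElementFixingIsConjugate}, proved later in the paper. Your conjugation argument ($\delta I_y\delta^{-1}=I_{\delta y}$, hence $\mathbf{I}$ is normal and equals the normal closure of the union of the isotropy groups) is correct once that identification is in place, but it should be stated as an input rather than assumed silently.
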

\begin{proof}
By \cite[Theorem~8.3(i)]{noohi2008} we have that  $\pi_1(X/G,[x])\cong \pi_1(\mathcal{X},\bar{x})/N$. The group $\pi_1(\mathcal{X},\bar{x})$ acts on $\tilde{X}\cong \tilde{X_1}\times \cdots \times\tilde{X}_k$ the universal cover of $X_1\times \cdots \times X_k$ in such a way that $[(\tilde{X}_1\times \ldots \times \tilde{X}_k)/\pi_1(\mathcal{X},\bar{x})]\cong \mathcal{X}$. As $G$ is finite, by Lemma \ref{lem:InertiaIsInjective} any stabilizer $I_x$ for the action of $\pi_1(\mathcal{X})$ on $\tilde{X}$ is finite, therefore it has the slice property and by \cite[Theorem~9.1]{noohi2008} we obtain that $\pi_1(X/G,[x])\cong \pi_1(\mathcal{X},\bar{x})/\mathbf{I}$ .

\end{proof}




\section{The fundamental group of the product of topological spaces}\label{section:MainTheorem}
\subsection{Constructing the homomorphism}

\subsubsection{Finite index of the group in the product}\label{ss:FiniteIndexInTheProduct}

Let $I_y$ denote the isotropy at the point $y$ in $\tilde{X}$ for the action of $\pi_1(\mathcal{X},\bar{x})$. By Lemma \ref{lem:InertiaIsInjective} the map $\pi_1(\mathcal{X},\bar{x})\to G$ restricted to $I_y$ is injective, therefore we can identify $I_y$ with a subgroup of $G$. When we do such identification we will denote it by $I_y'<G$.

Now as $\pi_1(\mathcal{X},\bar{x})\cong\pi_1(\mathcal{X}_1,\bar{x}_1)\times_G\cdots\times_G\pi_1(\mathcal{X}_k,\bar{x}_k)$, if $y=(y_1,\ldots,y_k)$ we define $I_i<\pi_1(\mathcal{X}_i,\bar{x}_i)$ as the image of $I_y$ via the morphism $\pi_1(\mathcal{X},\bar{x})\to \pi_1(\mathcal{X}_i,\bar{x}_i)$.

\begin{lem}\label{lem:IsomorphismOfIsotropies} We have that $I_y\cong I_i$ for all $i=1,\ldots,k$ and  $I_y=I_1\times _{I_y'}\cdots \times_{I_y'} I_k$.
\end{lem}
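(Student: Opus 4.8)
The plan is to make the action of $\pi_1(\mathcal{X},\bar x)\cong\mathbb{H}$ on $\tilde X=\tilde X_1\times\cdots\times\tilde X_k$ completely explicit, and then read off both assertions as statements about stabilizers of a product action. First I would check that, under the identification $\mathbb{H}=\pi_1(\mathcal{X}_1,\bar x_1)\times_G\cdots\times_G\pi_1(\mathcal{X}_k,\bar x_k)$, an element $\gamma=(\gamma_1,\ldots,\gamma_k)\in\mathbb{H}$ acts coordinate-wise, i.e.\ $\gamma\cdot(y_1,\ldots,y_k)=(\gamma_1\cdot y_1,\ldots,\gamma_k\cdot y_k)$, where each $\gamma_i$ acts on $\tilde X_i$ through the action of $\pi_1(\mathcal{X}_i,\bar x_i)$ described in \S\ref{ss:DescriptionOfOrbispaces}. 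This is the geometric crux and the step I expect to be the main obstacle: it requires that the universal cover of the product is the product of universal covers and that the action is compatible with the projections $\mathcal{X}\to\mathcal{X}_i$ inducing $\mathbb{H}\to\pi_1(\mathcal{X}_i,\bar x_i)$. Granting this, $\gamma\in I_y$ if and only if $\gamma_i$ stabilizes $y_i$ for every $i$, so writing $I_{y_i}<\pi_1(\mathcal{X}_i,\bar x_i)$ for the stabilizer of $y_i$ we get
\[
I_y=\{(\gamma_1,\ldots,\gamma_k)\in\mathbb{H}: \gamma_i\in I_{y_i}\ \text{for all } i\}.
\]

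By Lemma~\ref{lem:InertiaIsInjective} applied to each factor, $\varphi_i$ restricts to a monomorphism on $I_{y_i}$; set $I_{y_i}'=\varphi_i(I_{y_i})<G$. The defining fiber-product condition $\varphi_1(\gamma_1)=\cdots=\varphi_k(\gamma_k)$ then identifies the common value with an element of $\bigcap_i I_{y_i}'$, and conversely every $g\in\bigcap_i I_{y_i}'$ has a unique preimage $\gamma_i\in I_{y_i}$ under each $\varphi_i$, assembling into an element of $I_y$. Hence $I_y'=\varphi(I_y)=\bigcap_i I_{y_i}'$, and since $\varphi|_{I_y}$ is injective with this image, $\varphi$ gives $I_y\cong I_y'$.

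Next I would compute $I_i$, the image of $I_y$ under the $i$-th projection. An element $\gamma_i\in I_{y_i}$ lies in $I_i$ exactly when it extends to some $\gamma\in I_y$, which by the previous paragraph happens precisely when $\varphi_i(\gamma_i)\in I_y'$. Therefore $I_i=(\varphi_i|_{I_{y_i}})^{-1}(I_y')$, and because $\varphi_i|_{I_{y_i}}\colon I_{y_i}\xrightarrow{\sim}I_{y_i}'$ is an isomorphism carrying $I_i$ onto $I_y'\subseteq I_{y_i}'$, we obtain $I_i\cong I_y'\cong I_y$, proving the first assertion and exhibiting $\varphi_i|_{I_i}\colon I_i\to I_y'$ as the structural map for the fiber product.

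Finally, for the equality $I_y=I_1\times_{I_y'}\cdots\times_{I_y'}I_k$ I would argue by double inclusion against the description $I_1\times_{I_y'}\cdots\times_{I_y'}I_k=\{(\gamma_1,\ldots,\gamma_k):\gamma_i\in I_i,\ \varphi_1(\gamma_1)=\cdots=\varphi_k(\gamma_k)\}$. Any $\gamma\in I_y$ has each coordinate in $I_{y_i}$ with matching $\varphi$-value lying in $I_y'$, hence $\gamma_i\in I_i$, giving $I_y\subseteq I_1\times_{I_y'}\cdots\times_{I_y'}I_k$; conversely any tuple in the fiber product has $\gamma_i\in I_i\subseteq I_{y_i}$ with matching $\varphi$-values, hence lies in $\mathbb{H}$ and stabilizes each $y_i$, so it lies in $I_y$. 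This closes the argument, the only non-formal input being the coordinate-wise description of the action secured at the outset.
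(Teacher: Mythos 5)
Your proof is correct and follows essentially the same route as the paper: both hinge on the coordinate-wise description of the action on $\tilde X_1\times\cdots\times\tilde X_k$ and on Lemma~\ref{lem:InertiaIsInjective} to make $\varphi$ injective on isotropy groups, from which the projection $I_y\to I_i$ is seen to be bijective and the fiber-product description follows. Your write-up is simply more explicit than the paper's, notably in identifying $I_y'$ with $\bigcap_i I_{y_i}'$ and in carrying out the double inclusion for $I_y=I_1\times_{I_y'}\cdots\times_{I_y'}I_k$.
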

\begin{proof}
For $\gamma=(\gamma_1,\ldots,\gamma_k)\in I_y$ note that $\gamma_i\in \pi_1(\mathcal{X}_i,\bar{x}_i)$ fixes $y_i\in\tilde{X}_i$, otherwise $\gamma$ can not fix a point in $\tilde{X}$. As above, the restriction of $\pi_1(\mathcal{X}_i,\bar{x}_i)\to G$ to $I_{y_i}$ is injective and as $I_i\subset I_{y_i}$ we have that $\gamma_i\not = \beta_i$ for $\gamma,\beta\in I_y\subset \pi_1(\mathcal{X}_1,\bar{x}_1)\times_G\cdots\times_G \pi_1(\mathcal{X}_k,\bar{x}_k)$ with $\gamma\not = \beta$. Therefore we can construct an inverse to the projection. The result follows.
\end{proof}

Note that we obtain that $I_i<I_{y_i}$, but in general $I_{y_i}$ can be bigger. Let us define the homomorphism $I_y\to \prod I_{y_i}$ given by decomposing an element in its components. By Lemma \ref{lem:IsomorphismOfIsotropies} it is injective.  Denote by $N$ the subgroup in $\pi_1(\mathcal{X},\bar{x})$ generated by all the $I_y$  and by $N_i '$ the subgroup in $\pi_1(\mathcal{X}_i,\bar{x}_i)$ generated by $I_i$. 

\begin{lem}\label{lem:N_iIsNormal} The subgroup $N_i'$ is normal in $\pi_1(\mathcal{X}_i,\bar{x}_i)$.
\end{lem}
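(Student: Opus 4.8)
The plan is to show that $N_i'$, the subgroup of $\pi_1(\mathcal{X}_i,\bar{x}_i)$ generated by the isotropy image $I_i$, is normal. The natural strategy is to exploit the fact that $N_i'$ is generated not merely by a single isotropy subgroup $I_i$ but by the collection of isotropy images coming from \emph{all} points $y\in\tilde{X}$ whose $i$-th coordinate ranges over all of $\tilde{X}_i$. The key geometric fact is that conjugation in $\pi_1(\mathcal{X}_i,\bar{x}_i)$ by an element $\delta$ sends the isotropy group of a point $y_i\in\tilde{X}_i$ to the isotropy group of the translated point $\delta\cdot y_i$: concretely, $\delta I_{y_i}\delta^{-1}=I_{\delta\cdot y_i}$. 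So the union of all conjugates of all the isotropy groups is again a union of isotropy groups, and the subgroup they generate is visibly conjugation-invariant.

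First I would make precise what $N_i'$ is generated by. Recall $I_i$ is defined as the image of a single isotropy group $I_y$ under the projection $\pi_1(\mathcal{X},\bar{x})\to\pi_1(\mathcal{X}_i,\bar{x}_i)$, and by Lemma~\ref{lem:IsomorphismOfIsotropies} this image equals $I_y$ and sits inside $I_{y_i}$. I would argue that to generate $N_i'$ one should really let $y$ range over all points of $\tilde{X}$, so that $N_i'$ is the subgroup generated by $\{I_i : y\in\tilde{X}\}$; this is the reading consistent with the definition of $N$ (which is generated by \emph{all} the $I_y$). The point is that the generating set of $N_i'$ is the projection to the $i$-th factor of a conjugation-stable family.

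Second, I would establish the conjugation formula. For any $\delta\in\pi_1(\mathcal{X}_i,\bar{x}_i)$ and any $y_i\in\tilde{X}_i$, an element $\gamma$ fixes $y_i$ if and only if $\delta\gamma\delta^{-1}$ fixes $\delta\cdot y_i$, which gives $\delta\, I_{y_i}\,\delta^{-1}=I_{\delta\cdot y_i}$. Lifting this to the fiber product level, conjugating an isotropy contribution $I_y<\pi_1(\mathcal{X},\bar{x})$ by a suitable lift of $\delta$ produces the isotropy contribution at a translated point, and projecting to the $i$-th factor shows that $\delta$ conjugates one generator of $N_i'$ to another. Since the family of generators is permuted by conjugation, the subgroup $N_i'$ they generate satisfies $\delta N_i'\delta^{-1}=N_i'$ for every $\delta$, which is exactly normality.

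The main obstacle I anticipate is the bookkeeping at the fiber-product level: an arbitrary $\delta\in\pi_1(\mathcal{X}_i,\bar{x}_i)$ need not lift to an element of $\pi_1(\mathcal{X},\bar{x})$ whose $i$-th component is $\delta$ with controllable other components, because the fiber product only pairs elements sharing the same image in $G$. I would handle this by working directly in $\pi_1(\mathcal{X}_i,\bar{x}_i)$ acting on $\tilde{X}_i$: once the isotropy groups $I_{y_i}$ in the single factor $\pi_1(\mathcal{X}_i,\bar{x}_i)$ are shown to be permuted under conjugation, and $N_i'$ is generated by (subgroups of) these isotropies as $y_i$ varies over all of $\tilde{X}_i$, normality follows without ever needing to lift back to the product. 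The remaining care is simply to confirm that enlarging the generating set from $\{I_i\}$ to all the $I_{y_i}$ does not change the group $N_i'$, or alternatively to verify directly that the sub-collection indexed by the $i$-th coordinates is itself conjugation-stable.
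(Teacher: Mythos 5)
Your guiding idea --- conjugation permutes isotropy groups, so a subgroup generated by a conjugation-stable family of isotropies is normal --- is also the engine of the paper's proof. But the route you choose to avoid the fiber product opens a genuine gap. The group $N_i'$ is \emph{not} generated by the full isotropy groups $I_{y_i}$ of the action of $\pi_1(\mathcal{X}_i,\bar{x}_i)$ on $\tilde{X}_i$; it is generated by the projections $I_i$ of the product-level isotropies $I_y$, and the paper explicitly warns that $I_i<I_{y_i}$ can be a proper inclusion (this distinction is the whole point of the later Corollary showing $N_1'=N_1$ when all factors coincide, which requires its own argument). Working ``directly in $\pi_1(\mathcal{X}_i,\bar{x}_i)$ acting on $\tilde{X}_i$'' therefore proves normality of the larger group $N_i=\langle I_{y_i}\rangle$, not of $N_i'$; and a subgroup generated by \emph{subgroups of} the members of a conjugation-stable family need not be normal (a single transposition inside the conjugation-stable family of cyclic subgroups of $S_3$ already shows this). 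Your first fallback, ``confirm that enlarging the generating set from $\{I_i\}$ to all the $I_{y_i}$ does not change the group,'' is false in general for the same reason.

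Your second fallback --- verify that the family $\{I_i\}$ itself is conjugation-stable --- is the correct statement, but it is exactly the fiber-product bookkeeping you set out to avoid, and it is where all the content lies. Moreover the obstacle you anticipate there is illusory: given any $t_i\in\pi_1(\mathcal{X}_i,\bar{x}_i)$, surjectivity of each $\varphi_j:\pi_1(\mathcal{X}_j,\bar{x}_j)\to G$ lets you pick $t_j\in\varphi_j^{-1}(\varphi_i(t_i))$ for $j\ne i$, so $t=(t_1,\ldots,t_k)$ lies in the fiber product with $i$-th component \emph{exactly} $t_i$ (no control on the other components is needed). Then $t I_y t^{-1}=I_{ty}$ in $\pi_1(\mathcal{X},\bar{x})$, and projecting to the $i$-th factor gives $t_i I_i t_i^{-1}=$ ($i$-th projection of $I_{ty}$), which is again a generator of $N_i'$. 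Writing a general element of $N_i'$ as a product $\gamma_{i_1}\cdots\gamma_{i_j}$ of such generators and inserting $t_i^{-1}t_i$ between factors finishes the proof. This is precisely the paper's argument; your proposal needs to commit to it rather than to the single-factor shortcut.
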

\begin{proof}
Let $\gamma_i\in N_i'$ and $t_i\in \pi_1(\mathcal{X}_i,\bar{x}_i)$. We can write $\gamma_i=\gamma_{i_1}\cdots \gamma_{i_j}$ with each $\gamma_{i_l}\in I_{i_l}$ coming from $\gamma_l=(\gamma_{1_l},\ldots,\gamma_{i_l},\ldots, \gamma_{k_l})\in I_{y_l}\subset \pi_1(\mathcal{X},\bar{x})$ and the point $y_l=(y_{1_l},\ldots,y_{i_l},\ldots,y_{k_l}) \in \tilde{X} $ for $l=1,\ldots, j$. As every $\pi_1(\mathcal{X}_j,\bar{x}_j)\to G$ is surjective, for $j=1,\ldots,i-1,i+1,\ldots,k$, there exists $t_j\in \pi_1(\mathcal{X}_j,\bar{x}_j)$ such that $t=(t_1,\ldots,t_k)\in \pi_1(\mathcal{X},\bar{x})$.

 As $t \cdot \gamma_l \cdot  t^{-1}\in I_{ty_l }$ it follows that $t_i\gamma_{i_l}t_i^{-1}\in N_i'$ and therefore 
 $$t_i \gamma_i t_i^{-1}=(t_i\gamma_{i_1}t_i^{-1})\cdot  t_i\cdots t_i^{-1}\cdot (t_i\gamma_{i_j} t_i^{-1}) \in N_i'.$$
\end{proof}


\begin{prop}\label{prop:FiniteIndex} There is an homomorphism
\[\pi_1(X/G,[x])\to \prod_{i=1}^k \pi_1(\mathcal{X}_i,\bar{x}_i)/N_i'\]
such that the image has finite index.
\end{prop}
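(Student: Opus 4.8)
The plan is to construct the homomorphism explicitly from the fiber-product structure and then prove finite index by a counting/covering argument. First I would recall from Lemma~\ref{lem:QuotientFixed} that $\pi_1(X/G,[x])\cong \pi_1(\mathcal{X},\bar{x})/N$, where $N$ is generated by all the isotropy subgroups $I_y$ of the $\pi_1(\mathcal{X},\bar{x})$-action on $\tilde{X}$. On the other side, the projections $\pi_1(\mathcal{X},\bar{x})\to \pi_1(\mathcal{X}_i,\bar{x}_i)$ assemble into a map into $\prod_i \pi_1(\mathcal{X}_i,\bar{x}_i)$, and since $N$ is generated by the $I_y$, its image under the $i$-th projection lands inside $N_i'$ (generated by the $I_i$). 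Hence the composite $\pi_1(\mathcal{X},\bar{x})\to \prod_i \pi_1(\mathcal{X}_i,\bar{x}_i)\to \prod_i \pi_1(\mathcal{X}_i,\bar{x}_i)/N_i'$ kills $N$, so it descends to the desired homomorphism on $\pi_1(X/G,[x])=\pi_1(\mathcal{X},\bar{x})/N$. The normality of each $N_i'$ from Lemma~\ref{lem:N_iIsNormal} is what makes each quotient $\pi_1(\mathcal{X}_i,\bar{x}_i)/N_i'$ a group in the first place, so this step uses that lemma essentially.

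Next I would establish finite index. The natural strategy is to pass to the simpler fact that $\pi_1(\mathcal{X},\bar{x})\cong\mathbb{H}$ sits inside $\prod_i \pi_1(\mathcal{X}_i,\bar{x}_i)$ as the fiber product over $G$, which has finite index equal to $|G|^{k-1}$ in the full product. Indeed, $\mathbb{H}=\{(\gamma_1,\dots,\gamma_k):\varphi_1(\gamma_1)=\cdots=\varphi_k(\gamma_k)\}$, so the image of $\mathbb{H}$ in $\prod_i (G)$ is the diagonal $G$, of index $|G|^{k-1}$; this index is preserved after quotienting each factor by $N_i'$ provided $N_i'$ surjects onto the appropriate piece, or more cleanly, one argues directly that the image of $\mathbb{H}$ in $\prod_i \pi_1(\mathcal{X}_i,\bar{x}_i)/N_i'$ still contains the fiber product of the quotients over $G$, hence has finite index $|G|^{k-1}$ there. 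One must check that passing to the quotients does not destroy the $G$-compatibility: since each $N_i'$ lies in $\pi_1(X_i,x_i)=\ker\varphi_i$ (the isotropy injects into $G$ by Lemma~\ref{lem:InertiaIsInjective}, but its generators come from $\mathbb{H}$ and project into the kernel on each factor — this needs a short verification), the maps $\varphi_i$ factor through the quotients, and the fiber-product description survives.

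The key step I expect to be the main obstacle is precisely controlling the image: proving that the induced map $\pi_1(X/G,[x])\to \prod_i \pi_1(\mathcal{X}_i,\bar{x}_i)/N_i'$ has image of finite index, rather than merely that $\mathbb{H}$ has finite index in the product \emph{before} quotienting. The subtlety is that quotienting the target by $\prod_i N_i'$ could in principle enlarge the index of the image if the $N_i'$ interact badly with the fiber-product condition. I would address this by showing that the image of $\pi_1(\mathcal{X},\bar{x})=\mathbb{H}$ in $\prod_i \pi_1(\mathcal{X}_i,\bar{x}_i)/N_i'$ is exactly (or contains) the fiber product $\bigl(\pi_1(\mathcal{X}_1,\bar{x}_1)/N_1'\bigr)\times_G\cdots\times_G\bigl(\pi_1(\mathcal{X}_k,\bar{x}_k)/N_k'\bigr)$, which is well-defined because each $\varphi_i$ descends (as $N_i'\subset \ker\varphi_i$) to a surjection $\pi_1(\mathcal{X}_i,\bar{x}_i)/N_i'\twoheadrightarrow G$. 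A fiber product of surjections onto a finite group $G$ has index $|G|^{k-1}$ in the full product of the quotients, so the image is of finite index. Since the quotient map $\pi_1(\mathcal{X},\bar{x})\to\pi_1(X/G,[x])$ only collapses $N$, whose image in the target is already trivial, the image of $\pi_1(X/G,[x])$ coincides with the image of $\pi_1(\mathcal{X},\bar{x})$, completing the argument.
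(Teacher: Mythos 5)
Your construction of the homomorphism is essentially the paper's own argument: realize $\pi_1(X/G,[x])$ as $\pi_1(\mathcal{X},\bar{x})/N$ via Lemma~\ref{lem:QuotientFixed}, observe that each projection sends $N$ into $N_i'$, and descend using the normality of $N_i'$ from Lemma~\ref{lem:N_iIsNormal}. That part is correct. The problem is in the finite-index step, which rests on a claim that is false in general: $N_i'$ is \emph{not} contained in $\ker\varphi_i=\pi_1(X_i,x_i)$. The generators of $N_i'$ are the components $\gamma_i$ of isotropy elements $\gamma\in I_y\subset\mathbb{H}$, and by Lemma~\ref{lem:InertiaIsInjective} the restriction of $\varphi$ to $I_y$ is \emph{injective}; so a nontrivial isotropy element never lies in the kernel, and $\varphi_i(\gamma_i)=\varphi(\gamma)$ is a nontrivial element of $G$ whenever $\gamma\neq e$. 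In fact $\varphi_i(N_i')$ is precisely the subgroup $I\leq G$ generated by the elements having a fixed point in every $X_i$ (compare Lemma~\ref{lem:everyElementFixingIsConjugate}), which is exactly why the quotient $\pi_1(\mathcal{X}_i,\bar{x}_i)/N_i'$ is later identified with $\pi_1([(X_i/I)/(G/I)])$ in Proposition~\ref{prop:Quotient} --- a stack over $G/I$, not over $G$. Consequently $\varphi_i$ does not descend to a map $\pi_1(\mathcal{X}_i,\bar{x}_i)/N_i'\to G$, the ``fiber product of the quotients over $G$'' is not defined, and the verification you flagged as short would in fact fail. (Relatedly, quotienting can only \emph{decrease} the index of the image, never enlarge it, so the ``subtlety'' you set out to address is not actually a danger.)

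The conclusion is easily rescued, in either of two ways. The cheap way --- the one the paper takes --- is to note that under the surjection $\prod_i\pi_1(\mathcal{X}_i,\bar{x}_i)\to\prod_i\pi_1(\mathcal{X}_i,\bar{x}_i)/N_i'$, the image of the finite-index subgroup $\mathbb{H}$ has index at most $\left[\prod_i\pi_1(\mathcal{X}_i,\bar{x}_i):\mathbb{H}\right]\leq\abs{G}^{k-1}$, because coset representatives map onto coset representatives; no compatibility between the $N_i'$ and the fiber-product condition is needed. Alternatively, your fiber-product picture survives verbatim if you replace $G$ by $G/I$ throughout: each $\varphi_i$ descends to a surjection $\pi_1(\mathcal{X}_i,\bar{x}_i)/N_i'\to G/I$, and since $\varphi_i(N_i')=I$ (with $I$ normal in $G$) one checks that the image of $\mathbb{H}$ is exactly the fiber product of the quotients over $G/I$, of index $\abs{G/I}^{k-1}$ in the full product. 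Either repair yields the statement; as written, your argument has a genuine gap at the point where it asserts $N_i'\subset\ker\varphi_i$.
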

\begin{proof}
By Lemma \ref{lem:OrbGroupIsFiberProduct} we have that $\pi_1(\mathcal{X},\bar{x})\cong \pi_1(\mathcal{X}_1,\bar{x}_1)\times_G\cdots\times_G \pi_1(\mathcal{X}_k,\bar{x}_k)$. Therefore there is an injective homomorphism $\pi_1(\mathcal{X},\bar{x})\to \prod \pi_1(\mathcal{X}_i,\bar{x}_i)$. By Lemma \ref{lem:N_iIsNormal} we obtain the exact sequence \begin{equation}
1\to \prod_{i=1}^k N_i' \to \prod_{i=1}^k \pi_1(\mathcal{X}_i,\bar{x}_i)\to \prod_{i=1}^k \pi_1(\mathcal{X}_i,\bar{x}_i)/N_i'\to 1,
\end{equation}
and together with Lemma \ref{lem:QuotientFixed} we obtain a commutative diagram 
\begin{equation}\label{eq:SemiCommutativeD}
\begin{tikzcd}
  & 1 \arrow[d] & 1\arrow[d] \\
1 \arrow[r] & N \arrow[r] \arrow[d] & \prod_{i=1}^k N_i'  \arrow[d]\\
1 \arrow[r] & \pi_1(\mathcal{X},\bar{x}) \arrow[r] \arrow[d] & \prod_{i=1}^k \pi_1(\mathcal{X}_i,\bar{x}_i) \arrow[d] \\
 & \pi_1(X/G,[x])\arrow[d] \arrow[r]& \prod_{i=1}^k \pi_1(\mathcal{X}_i,\bar{x}_i)/N_i'\arrow[d]  \\
 & 1 & 1.
\end{tikzcd}
\end{equation}
This diagram provides a homomorphism
\[\pi_1(X/G,[x])\to \prod_{i=1}^k \pi_1(\mathcal{X}_i,\bar{x}_i)/N_i'\]
and shows that it is well defined. We can not complete (\ref{eq:SemiCommutativeD}) to a commutative diagram of groups with short exact sequence in the rows because usually $\pi_1(\mathcal{X},\bar{x})$ is not normal in $\prod_{i=1}^k \pi_1(\mathcal{X}_i,\bar{x}_i)$. It will be normal, for example, if $G$ is abelian. 

As $G$ is finite we obtain that $\pi_1(\mathcal{X},\bar{x})$ has finite index in $\prod_{i=1}^k \pi_1(\mathcal{X}_i,\bar{x}_i)$. In fact the upper-bound
\[\left[\prod_{i=1}^k \pi_1(\mathcal{X}_i,\bar{x}_i):\pi_1(\mathcal{X},\bar{x})\right]\leq \abs{G}^{k-1}\]
can be seen as follows. For each surjection $\varphi_i:\pi_1(\mathcal{X}_i,\bar{x}_i)\to G$ consider a lift $G_i\subset \pi_1(\mathcal{X}_i,\bar{x}_i)$ of $G$ with $\abs{G_i}=\abs{G}$. In $\prod_{i=1}^k G_i$, let us consider the equivalence relation
\[(g_1,\ldots, g_k)\sim (g_1',\ldots,g_k') \Leftrightarrow (\varphi_1(g_1),\ldots, \varphi_k(g_k))=(g\varphi_1(g_1'),\ldots,g\varphi_k(g_k'))\ \text{with}\ g\in G.\] 
It is easily seen that the quotient $\left(\prod_{i=1}^k G_i\right) /\sim \cong  (G\times \cdots \times G)/ \Delta_G$ is a set of representatives of left cosets $(\prod_{i=1}^k \pi_1(\mathcal{X}_i),\bar{x}_i) /\pi_1(\mathcal{X},\bar{x})$. By considering as coset representatives in $\prod_{i=1}^k \pi_1(\mathcal{X}_i,\bar{x}_i)/N_i'$ the image of $\prod_{i=1}^k G_i $ and using the diagram (\ref{eq:SemiCommutativeD}) we have that $\pi_1(X/G,[x])$ has finite index in $\prod_{i=1}^k \pi_1(\mathcal{X}_i,\bar{x}_i)/N_i'$.
\end{proof}

\subsection{The homomorphism has finite kernel}
\subsubsection{The subgroup $N_i'$ is finitely normally generated}\label{ss:DescriptionOfOrbispaces}
Let $X$ be a connected, semi-locally simply connected and locally path connected topological space. Let $G$ be a discrete finite group acting on $X$, $x\in X$ and denote by $\bar{x}\in \mathcal{X}=[X/G]$ the image of the point $x$ and by $p:X\to [X/G]$ the quotient map.

Let us briefly recall the description of $\pi_1(\mathcal{X},\bar{x})$ as given in \cite{chen2001homotopy}. It can be defined as $\pi_0(\Omega(\mathcal{X},\bar{x}))$ where $\Omega(\mathcal{X},\bar{x})$ denote the space loop of $\mathcal{X}$ pointed at the constant loop of value $\bar{x}$. Every loop is given locally as a map from an open subset of $S^1$ to a given uniformization of an open subset of $\mathcal{X}_\mathrm{top}$ and plus some gluing conditions. In our case of a global quotient, a more explicit description of $\Omega(\mathcal{X},\bar{x})$ can be given as follows.

Let $P(X,x)$ consist of paths in $X$ starting at $x$. As a subspace of $\Lambda(X)$, the free loop space of $X$, it inherits a structure of a topological space. By considering the constant loop $x$ of value $x\in X$, we obtain $(P(X,x),x)$ a pointed topological space. Define $P(X,G,x)$ as the subspace of $P(X,x)\times G$ consisting of the elements $(\gamma,g)$ satisfying $\gamma(1)=g \cdot \gamma(0)=g \cdot x$. As a topological space it is pointed at $(x, 1_G)$
\begin{lem}{\cite[Lemma 3.4.2]{chen2001homotopy}}
There exists a natural homeomorphism between the pointed topological spaces $(\Omega(\mathcal{X},\bar{x}),x)$ and $(P(X,G,x),(x,1_G))$.
\end{lem}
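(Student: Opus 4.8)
The plan is to produce an explicit continuous bijection $\Phi\colon P(X,G,x)\to\Omega(\mathcal{X},\bar{x})$ with continuous inverse, exploiting the fact that a global quotient $\mathcal{X}=[X/G]$ carries the single global uniformization $p\colon X\to\mathcal{X}$ whose ``deck group'' is $G$. First I would recall from \cite{chen2001homotopy} the form of a pointed loop in $\mathcal{X}$: an open cover $\{U_a\}$ of $S^1$, local lifts $\gamma_a\colon U_a\to X$ of the loop to the uniformizing charts, and transition elements $g_{ab}\in G$ satisfying $\gamma_a=g_{ab}\cdot\gamma_b$ on the overlaps together with the cocycle condition, the pointing prescribing the value $x$ and the element $1_G$ at the base point of $S^1$. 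The key simplification is that for a global quotient there is essentially one chart: every uniformization is an open of $X$ with the same group $G$, so the gluing datum is governed entirely by $G$.

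Using this, I would show that the gluing datum over $S^1$ is equivalent to a single path together with one monodromy element. Cutting $S^1$ at the base point yields $[0,1]$; over this contractible interval the transition data trivializes, so the local lifts assemble into one genuine path $\gamma\colon[0,1]\to X$ with $\gamma(0)=x$. Traversing $S^1$ once, the product of the $g_{ab}$ collapses to a single monodromy $g\in G$, and the relation $\gamma_a=g_{ab}\cdot\gamma_b$ forces the closing-up condition $\gamma(1)=g\cdot x$. This defines $\Psi\colon\Omega(\mathcal{X},\bar{x})\to P(X,G,x)$, $\ell\mapsto(\gamma,g)$. Conversely, given $(\gamma,g)\in P(X,G,x)$, composing with $p$ and using $g$ as the single transition identifying $\gamma(1)=g\cdot x$ with $x$ over the base point produces a well-defined pointed loop $\Phi(\gamma,g)$; the hypothesis $\gamma(1)=g\cdot x$ is precisely what makes the one-transition cocycle consistent. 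Equivalently, in stacky language a map $S^1\to[X/G]$ is a principal $G$-bundle on $S^1$ together with a $G$-equivariant map to $X$, and trivializing the bundle over $[0,1]$ by means of the chosen framing at the base point turns it into the pair $(\gamma,g)$, where $g$ records the bundle's monodromy.

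Finally I would check that $\Phi$ and $\Psi$ are mutually inverse and continuous. Continuity of $\Phi$ amounts to the statement that $(\gamma,g)\mapsto p\circ\gamma$ is continuous for the compact-open topologies, which is immediate since $p$ is continuous and $G$ is discrete. The substantive point is continuity of $\Psi$, namely that the lift $\gamma$ and the monodromy $g$ depend continuously on the loop: here one uses that $p\colon X\to\mathcal{X}$ is, via the fibration $G\to X\to\mathcal{X}$ of \S\ref{ss:FiberHomotopyES}, a covering-type fibration with discrete fibre $G$, so path lifting along $p$ furnishes $\gamma$ continuously while $g$ is locally constant on $\Omega(\mathcal{X},\bar{x})$. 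The main obstacle is exactly this bookkeeping with Chen's chart-and-cocycle definition: one must verify that the reduction of an arbitrary gluing cocycle over $S^1$ to a single monodromy element is canonical, i.e.\ independent of the chosen cover and of the trivialization over $[0,1]$ once the framing is fixed, and that this canonical normal form is a homeomorphism of loop spaces rather than merely a bijection on $\pi_0$. Once the single-monodromy normal form is established continuously, the base points $x$ and $(x,1_G)$ match on the nose, and the homeomorphism $(\Omega(\mathcal{X},\bar{x}),x)\cong(P(X,G,x),(x,1_G))$ follows.
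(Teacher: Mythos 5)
The paper offers no proof of this statement at all: it is quoted directly from \cite[Lemma~3.4.2]{chen2001homotopy}, so the only ``in-paper argument'' is the citation. Your argument --- trivialising the pulled-back $G$-torsor over $[0,1]$ using the framing at the base point to extract a single path $\gamma$ with $\gamma(0)=x$ and a single monodromy element $g$ with $\gamma(1)=g\cdot x$, then checking that this normal form is a continuous bijection with continuous inverse via unique path lifting and local constancy of $g$ --- is precisely the standard proof for a global quotient by a discrete group (essentially Chen's), and it is correct in outline.
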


\begin{rem} When $(\mathcal{X},\bar{x})$ is a pointed topological stack there exists $(B[R\rightrightarrows X],x')$ a pointed topological space, where $B [R\rightrightarrows X]$ is the classifying space of the topological groupo\"id $[R \rightrightarrows X]$,  such that we can take $\pi_1(\mathcal{X},\bar{x}):=\pi_1(B[R\rightrightarrows X],x')$. In the case of a global quotient $\mathcal{X}=[X/G]$ it happens that $B[R\rightrightarrows X]$ equals the Borel construction $X\times_G EG$, see \cite{NOOHI20122014}.

Now, the construction of Chen also gives a natural isomorphism between $\pi_1(\mathcal{X},\bar{x})$ and $\pi_1(X\times_G EG, x')$ \cite[Theorem 3.4.1]{chen2001homotopy} linking both definitions.
\end{rem}

There exists a canonical projection $(P(X,G,x),(x,1_G))\to (G,1_G)$ given by sending $(\gamma,g)$ to $g$. This map can be seen to be a fibration \cite[Lemma 3.4.3]{chen2001homotopy} having as fiber at $1_G$ the space loop $\Omega(X,x)$ via the embedding $\Omega(X,x)\hookrightarrow P(X,G,x)$ where $\gamma$ maps to $(\gamma, 1_G)$.

\medskip

With this description at hand, suppose there is $y\in X$ such that it is fixed by an element $g$, that is, $y\in X^g$. Denote by $\gamma_y$ a path starting at $x$ and finishing at $y$, then $\gamma_y (g\gamma_y^{-1})\in P(X,G,x)$, where $g\gamma_y^{-1}$ denotes the action of $g$ applied to each point of the path.

\begin{lem} Let $I_{y}<G$ denote the inertia (stabilizer) of the action of $G$ at $y\in X$. Every homotopy class of a path $[\gamma_y]\in \pi_1(X,x,y)$ induces an injective morphism $I_y\to \pi_1(\mathcal{X},x)$.
\end{lem}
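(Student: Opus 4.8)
The plan is to write the claimed monomorphism down explicitly in Chen's model $\pi_1(\mathcal{X},x)\cong\pi_0(P(X,G,x))$ recalled above, and then read off injectivity from the canonical projection to $G$. Fix a representative path $\gamma_y$ of the class $[\gamma_y]\in\pi_1(X,x,y)$. For $g\in I_y$ the concatenation $\gamma_y(g\gamma_y^{-1})$ runs from $x$ to $y$ and then, because $g\cdot y=y$, from $y=g\cdot y$ to $g\cdot x$; hence $(\gamma_y(g\gamma_y^{-1}),g)$ satisfies the endpoint condition defining $P(X,G,x)$. I would therefore set
\[
\rho\colon I_y\to\pi_1(\mathcal{X},x),\qquad \rho(g)=\big[\big(\gamma_y(g\gamma_y^{-1}),\,g\big)\big].
\]
Since concatenation of paths and the (continuous) $G$-action both preserve homotopy rel endpoints, $\rho(g)$ depends only on $[\gamma_y]$, so $\rho$ is well defined; this is also the point at which the dependence on the chosen homotopy class enters.

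Next I would verify that $\rho$ is a homomorphism, using the groupoid product on $\pi_0(P(X,G,x))$, which in these coordinates reads $(\alpha,g)\cdot(\beta,h)=(\alpha\,(g\beta),\,gh)$. For $g,h\in I_y$ this gives
\[
\rho(g)\,\rho(h)=\big[\big(\gamma_y\,(g\gamma_y^{-1})\,(g\gamma_y)\,(gh\gamma_y^{-1}),\;gh\big)\big].
\]
The decisive point is again the fixed-point hypothesis: since $g\cdot y=y$, the inner segment $(g\gamma_y^{-1})(g\gamma_y)$ is a path from $y$ to $g\cdot x$ immediately followed by its own reverse, hence null-homotopic rel endpoints. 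Cancelling it leaves the representative $(\gamma_y(gh\gamma_y^{-1}),gh)$, that is $\rho(g)\rho(h)=\rho(gh)$.

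Injectivity then comes for free. The projection $P(X,G,x)\to G$, $(\gamma,g)\mapsto g$, induces on $\pi_0$ exactly the surjection $\pi_1(\mathcal{X},x)\to G$ of the homotopy exact sequence of \S\ref{ss:FiberHomotopyES}, and by construction the composite $I_y\xrightarrow{\rho}\pi_1(\mathcal{X},x)\to G$ is the inclusion $I_y\hookrightarrow G$; an injective composite forces $\rho$ to be injective. The step I expect to require the most care is pinning down the groupoid multiplication on $\pi_0(P(X,G,x))$ precisely --- in particular the placement of the twist $g\beta$ (rather than $\beta$) in the product --- since it is exactly this twist together with the identity $g\cdot y=y$ that produces the cancellation above; with the opposite convention one instead obtains an anti-homomorphism, and one must either reverse $\gamma_y$ or pass to $g^{-1}$ to land on an honest homomorphism.
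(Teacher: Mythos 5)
Your proof is correct and follows essentially the same route as the paper: the same explicit map $g\mapsto\bigl[(\gamma_y(g\gamma_y^{-1}),g)\bigr]$ into Chen's model $\pi_0(P(X,G,x))$, with injectivity read off from the projection to $G$. The only difference is that you spell out the homomorphism property via the twisted product $(\alpha,g)\cdot(\beta,h)=(\alpha\,(g\beta),gh)$ and the cancellation of $(g\gamma_y^{-1})(g\gamma_y)$, a verification the paper compresses into ``taking the functor $\pi_0$ gives a morphism of groups.''
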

\begin{proof}
As $G$ is discrete $g\mapsto \gamma_g (g \gamma_g^{-1})$ is continuous, with $g\in I_y$. Then by taking the functor $\pi_0$ we got a morphism of groups $\pi_0(I_y)\to \pi_0(P(X,G,x))=\pi_1(\mathcal{X},\bar{x})$. Finally, by composing with the projection $(\pi_0(P(X,G,x),x))\to \pi_0((G,1_G))$ we obtain that different points under $\pi_0(I_y)\to\pi_1(\mathcal{X},\bar{x})\to \pi_0(G)$ have different images, thus the morphism in injective.
\end{proof}

\begin{lem}\label{lem:PathConnectedComponents}
Let $Y\in \pi_0(X^g)$, $y_1,y_2\in Y$ and let $\gamma_{y_1},\gamma_{y_2}$ be paths starting at $x\in X$ and finishing at $y_1$ and $y_2$ respectively, then $\gamma_{y_1} (g\gamma_{y_1}^{-1})$ is a conjugate of $\gamma_{y_2} (g\gamma_{y_2}^{-1})$ in $\pi_1(\mathcal{X},\bar{x})$ by elements of $\pi_1(X,x)$.
\end{lem}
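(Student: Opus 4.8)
The plan is to produce an explicit element of $\pi_1(X,x)$ that conjugates one loop into the other, working throughout with the concrete model $\pi_1(\mathcal{X},\bar x)=\pi_0(P(X,G,x))$ recalled above. Since $y_1$ and $y_2$ lie in the same path-connected component $Y$ of the fixed locus $X^g$, the first step is to choose a path $\delta$ from $y_1$ to $y_2$ lying \emph{entirely inside} $X^g$. This is the only place the hypothesis $y_1,y_2\in Y$ enters, and its crucial consequence is that $g$ fixes $\delta$ pointwise, i.e. $g\delta=\delta$ as paths (and hence $g\delta^{-1}=\delta^{-1}$).

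With $\delta$ in hand I would form the loop $\alpha:=\gamma_{y_1}\delta\gamma_{y_2}^{-1}$ based at $x$: since $\gamma_{y_1}$ runs from $x$ to $y_1$, $\delta$ from $y_1$ to $y_2$, and $\gamma_{y_2}^{-1}$ from $y_2$ back to $x$, this is a genuine element of $\pi_1(X,x)$, corresponding to $(\alpha,1_G)\in P(X,G,x)$. The claim is then that conjugation by $(\alpha,1_G)$ carries $\gamma_{y_2}(g\gamma_{y_2}^{-1})$ to $\gamma_{y_1}(g\gamma_{y_1}^{-1})$.

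To check this I would compute the product directly from the group law on $P(X,G,x)$, namely $(\sigma,h)(\sigma',h')=(\sigma\,(h\sigma'),hh')$ with inverse $(\sigma,h)^{-1}=(h^{-1}\sigma^{-1},h^{-1})$. Forming $(\alpha,1_G)\,(\gamma_{y_2}(g\gamma_{y_2}^{-1}),g)\,(\alpha,1_G)^{-1}$ gives a pair with group component $g$ and path component $\alpha\,\gamma_{y_2}\,(g\gamma_{y_2}^{-1})\,(g\alpha^{-1})$. Substituting $\alpha=\gamma_{y_1}\delta\gamma_{y_2}^{-1}$ and cancelling the backtrackings $\gamma_{y_2}^{-1}\gamma_{y_2}$ and $(g\gamma_{y_2}^{-1})(g\gamma_{y_2})$ (homotopies rel endpoints in $X$) collapses the path component to $\gamma_{y_1}\,\delta\,(g\delta^{-1})\,(g\gamma_{y_1}^{-1})$. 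Finally, invoking $g\delta^{-1}=\delta^{-1}$ turns $\delta(g\delta^{-1})$ into the nullhomotopic loop $\delta\delta^{-1}$, leaving exactly $\gamma_{y_1}(g\gamma_{y_1}^{-1})$.

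The computation is mechanical once the group law is fixed, so I expect the only real care to be bookkeeping: making sure the $G$-action is applied consistently to reversed paths (so that $(g\sigma)^{-1}=g(\sigma^{-1})$ is used correctly) and that every cancellation is a homotopy rel endpoints taking place in $X$, hence valid in $\pi_1(X,x)\subset\pi_1(\mathcal{X},\bar x)$. The genuinely geometric content is confined to the very first step, the existence of the path $\delta$ inside $X^g$, which is precisely what membership of $y_1,y_2$ in a single component $Y\in\pi_0(X^g)$ provides.
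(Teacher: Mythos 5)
Your argument is correct and follows essentially the same route as the paper: the conjugating element you construct, $\alpha=\gamma_{y_1}\delta\gamma_{y_2}^{-1}$, is exactly the loop $\theta=\gamma_{y_1}\beta\gamma_{y_2}^{-1}$ used in the paper's proof, and the key input in both cases is that the connecting path inside $Y\subset X^g$ is fixed pointwise by $g$. Your version merely spells out the group law on $P(X,G,x)$ and the cancellations more explicitly than the paper does.
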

\begin{proof}
There exists a path $\beta\subset Y$ connecting $y_1$ and $y_2$, therefore $\gamma_{y_1}\beta (g \beta^{-1}\gamma_{y_1}^{-1})\in P(X,G,x)$ but as $g\beta=\beta$ passing to $\pi_0(P(X,G,x),x)$ it equals $[\gamma_{y_1}(g \gamma_{y_1}^{-1})]$.

Now consider the path $\gamma_{y_2}$. Note that $\theta:=\gamma_{y_1}\beta \gamma_{y_2}^{-1}\in \Omega(X,x)$. There exists a continuous map
\[\#:P(X,G,x)\times P(X,G,x)\to P(X,G,x)\]
which induces the multiplication in the fundamental group (see \cite[Section~3.1]{chen2001homotopy}).  The element $\theta \#(\gamma_{y_2} (g \gamma_{y_2}^{-1})\#\theta^{-1})  $ can be seen to be $\theta ( \gamma_{y_2}\cdot (g(\theta \cdot \gamma_{y_2})^{-1}))\in P(X,G,x)$. By passing to the group $\pi_1(\mathcal{X},\bar{x})=\pi_0(P(X,G,x),x)$ we have that $ [\theta][\gamma_{y_2} (g \gamma_{y_2}^{-1})][\theta^{-1}]= [\gamma_{y_1}(g\gamma_{y_1}^{-1})]$.
\end{proof}

Recall that given $(X,x)$ as above, we have a pointed universal cover map $p:(\tilde{X},\tilde{x})\to (X,x)$ where $\tilde{x}$ represents the constant loop of value $x$. Every element in $\gamma\in\pi_1(X)$ corresponds to a point in $p^{-1}(x)$. So given a pointed map $p_\gamma:(\tilde{X},\gamma)\to (X,x)$ it induces a deck transformation of $\tilde{X}$ in the following way:
given $y\in \tilde{X}$ take a path $\alpha_y\subset \tilde{X}$ starting at $\gamma$ and finishing at $y$. Consider the unique lift $\tilde{p_\gamma(\alpha_y)}\subset \tilde{X}$ starting at $x$ and assign to $y$ the point $\tilde{p_\gamma(\alpha_y)}(1)$. It can be seen to be a well-defined map (see \cite{hatcher}). 

Now, by the description given above of $\pi_1(\mathcal{X},\bar{x})$, any $\gamma\in \pi_1(\mathcal{X},\bar{x})$ such that $\varphi(\gamma)=g$ (recall that $\varphi:\pi_1(\mathcal{X},\bar{x})\to G$) have as a representative an element in $P(X,G,x)$ which we still denote by $\gamma$. So $\gamma$ starts at $x$ and finishes at $gx$. Denote by $\tilde{\pi}:(\tilde{X},\tilde{x})\to (\mathcal{X},\bar{x})$ the universal cover morphism, note that $\tilde{\pi}_\gamma:(\tilde{X},\gamma)\to (\mathcal{X},\bar{x})$ is also a cover morphism. By \cite[Theorem~4.1.6]{chen2001homotopy} we obtain a deck transformation in the following way: given $y\in \tilde{X}$ take a path $\alpha_y\subset\tilde{X}$ starting at $\gamma$ and ending at $y$. Using the notation of the precedent paragraph, the path $p_\gamma(\alpha_y)$ starts at $gx$. Then the path $g^{-1} p_\gamma(\alpha_y)$ starts at $x$ so we can lift it to $\tilde{g^{-1}p_\gamma(\alpha_y)}$ in $(\tilde{X},\tilde{x})$, the end point of this lift is then defined as the image of $y$. It is shown that it is a well defined map and does not depend on the path chosen.

\begin{lem}\label{lem:everyElementFixingIsConjugate} Let $y\in X$ be fixed by $g\in G$, consider a path $\gamma_y$ connecting $x$ and $y$. Consider the action of $\pi_1(\mathcal{X},\bar{x})$ on $\tilde{X}$ given by deck transformations $\mathrm{Deck}(\tilde{X},\mathcal{X})$, then the element $\gamma_y(g\gamma_y^{-1})\in \pi_1(\mathcal{X},\bar{x})$ fixes a point in $\tilde{X}$. Moreover, any  element of $\pi_1(\mathcal{X},\bar{x})$ fixing a point in $\tilde{X}$ is of this form.
\end{lem}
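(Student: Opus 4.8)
The plan is to prove both directions by directly unwinding the explicit description of the deck-transformation action of $\pi_1(\mathcal{X},\bar{x})$ on $\tilde{X}$ recalled just above, working throughout in the path-space model in which a point of $\tilde{X}$ is a homotopy class rel endpoints of a path in $X$ starting at $x$, with $\tilde{x}$ the constant path. The only geometric input beyond the path bookkeeping will be the relation $gy=y$.

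First I would treat the forward assertion. Let $g\in G$ fix $y$, let $\gamma_y$ be a path from $x$ to $y$, and set $\delta=\gamma_y(g\gamma_y^{-1})$, so $\varphi(\delta)=g$. The natural candidate fixed point is $z:=[\gamma_y]\in\tilde{X}$, lying over $y$. To evaluate $\delta\cdot z$ I choose a path $\alpha$ in $\tilde{X}$ from the point $\delta$ (which lies over $gx$) to $z$; since the lift at $\delta$ of a path $\beta$ based at $gx$ ends at $[\delta\beta]$, the choice $p(\alpha)=g\gamma_y$ works, because $\delta(g\gamma_y)=\gamma_y(g\gamma_y^{-1})(g\gamma_y)\simeq\gamma_y$. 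The formula then requires lifting $g^{-1}p(\alpha)=\gamma_y$ at $\tilde{x}$, whose endpoint is $[\gamma_y]=z$. Hence $\delta\cdot z=z$.

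For the converse, suppose $\delta$ fixes $z\in\tilde{X}$; put $g=\varphi(\delta)$, $y=p(z)$, and pick $\gamma_z$ with $z=[\gamma_z]$. Choosing $\alpha$ from $\delta$ to $z$ with $p(\alpha)=\delta^{-1}\gamma_z$, the formula gives $\delta\cdot z=[g^{-1}p(\alpha)]$, a class lying over $g^{-1}y$. Since this must equal $z$, which lies over $y$, I first read off $g^{-1}y=y$, so $y$ is fixed by $g$. Writing the fixed-point equality $\delta\cdot z=z$ as the rel-endpoint homotopy $(g^{-1}\delta^{-1})(g^{-1}\gamma_z)\simeq\gamma_z$ in $X$ and applying the homeomorphism given by the action of $g$ (which preserves such homotopies) yields $\delta^{-1}\gamma_z\simeq g\gamma_z$; cancelling $\gamma_z$ gives $\delta\simeq\gamma_z(g\gamma_z^{-1})$, hence equality in $\pi_1(\mathcal{X},\bar{x})$, exhibiting $\delta$ in the required form.

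The step needing the most care --- and the main potential obstacle --- is the bookkeeping inside the deck-transformation formula: one must select the auxiliary paths $\alpha$ in $\tilde{X}$ so that their projections attain the prescribed values at the basepoints $gx$ and $x$, and invoke the well-definedness of the deck transformation, namely its independence from the choice of $\alpha$ by \cite[Theorem~4.1.6]{chen2001homotopy}, so that these convenient choices legitimately compute $\delta\cdot z$. Once the formula is applied correctly, everything reduces to elementary cancellation of paths up to homotopy rel endpoints.
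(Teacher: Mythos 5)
Your proof is correct, and the forward direction coincides with the paper's: both pick the point $[\gamma_y]\in\tilde{X}$, choose the auxiliary path $\alpha$ so that $p(\alpha)=g\gamma_y$, and observe that the lift of $g^{-1}p(\alpha)=\gamma_y$ at $\tilde{x}$ ends back at $[\gamma_y]$. Where you diverge is in the converse. The paper's argument is shorter and less computational: from $\gamma\cdot z=z$ it reads off $\varphi(\gamma)p(z)=p(z)$ via the $\varphi$-equivariance of $p$, forms the element $\gamma_z(\varphi(\gamma)\gamma_z^{-1})$, notes by the first part that it also lies in the isotropy group $I_z$, and then concludes equality with $\gamma$ from Lemma~\ref{lem:InertiaIsInjective} (injectivity of $\varphi$ on isotropy subgroups), since both elements have the same image $\varphi(\gamma)$ in $G$. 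You instead unwind the deck-transformation formula completely, obtaining the rel-endpoint homotopy $(g^{-1}\delta^{-1})(g^{-1}\gamma_z)\simeq\gamma_z$, transporting it by the homeomorphism $g$ and cancelling to get $\delta\simeq\gamma_z(g\gamma_z^{-1})$ directly. Your route is more self-contained --- it does not invoke the earlier lemma and exhibits the identification explicitly --- at the cost of more path bookkeeping; the paper's route buys brevity by reusing the structural fact that $\varphi$ separates elements of a stabilizer. Both are valid, and your explicit choices of $\alpha$ are legitimate precisely because of the independence statement you cite.
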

\begin{proof}
As the endpoint of $\gamma_y(g\gamma_y^{-1})$ is $gx$ we have a pointed covering morphism
\[\tilde{\pi}_{\gamma_y(g\gamma_y^{-1})}:(\tilde{X},\gamma_y(g\gamma_y^{-1}))\to (\mathcal{X},\bar{x})\]
and we can consider $g\gamma_y$ as a path in $\tilde{X}$ connecting $\gamma_y(g\gamma_y^{-1})$ and $\gamma_y$ as follows: let us define $f(t)=\gamma(g \gamma_y^{-1})\cdot (g\gamma_y|_t)$ where $g\gamma_y|_t(t'):=g\gamma_y(t'/t)$ denote the path starting at $gx$ and finishing at $g\gamma_y(t)$ in time $t$ for $t\not =0$ and being the constant path with value $gx$ if $t=0$. We project then $f(t)$ to $X$ and obtain $g\gamma_y$ which starts at $gx$ and finishes at $\bar{y}$. By the discussion before the lemma, we obtain that it lifts to $\gamma_y$ in $(\tilde{X},\tilde{x})$, as $g$ fixes $y$ we obtain that the point $\gamma_y\in \tilde{X}$ is fixed by the induced deck transformation.

Consider the exact sequence 
$$1\to \pi_1(X,x)\to \pi_1(\mathcal{X},\bar{x})\overset{\varphi}{\to} G\to 1, $$
let $\gamma\in \pi_1(\mathcal{X},\bar{x})$ and $z\in \tilde{X}$ such that $\gamma$ fixes  $z$. Let $p:(\tilde{X},\tilde{x})\to (X,x)$ be the projection, as it is $\varphi$-invariant we have that $\varphi(\gamma) p(z)=p(z)$. Then by considering the path in $X$ corresponding to $z$, we can construct an element $z \varphi(\gamma)z^{-1}$, which fixes $z\in \tilde{X}$. As in the isotropy $\varphi$ is injective by Lemma \ref{lem:InertiaIsInjective}, we have that $z\varphi(\gamma)z^{-1}=\gamma$.
\end{proof}

\begin{prop}\label{prop:NiIsNormallyFinitelyGenerated} Suppose that there are only a finite number of elements in $\pi_0(X^g)$ for each $g\in G$, then  there exists a finite set $L\subset\pi_1(\mathcal{X},\bar{x})$ consisting of elements having fixed points in $\tilde{X}$ such that if $\gamma\in \pi_1(\mathcal{X},\bar{x})$ fixes a point in $\tilde{X}$ then it is conjugate to an element of $L$ by elements in $\pi_1(X,x)$.
\end{prop}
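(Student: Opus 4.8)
The plan is to parametrize the elements of $\pi_1(\mathcal{X},\bar{x})$ that fix a point of $\tilde{X}$ by the combinatorial data $(g,Y)$, where $g\in G$ and $Y$ is a path-connected component of $X^g$, and to single out one representative for each such pair. Since $G$ is finite and, by hypothesis, each $\pi_0(X^g)$ is finite, there are only finitely many such pairs, so the resulting set $L$ will be finite for free. The real work is to check that every element fixing a point of $\tilde{X}$ is conjugate, by an element of $\pi_1(X,x)$, to the representative attached to its data $(g,Y)$, and this will follow almost directly from the two preceding lemmas, which already carry out all the geometry. As a first move I would invoke Lemma \ref{lem:everyElementFixingIsConjugate}: every $\gamma\in\pi_1(\mathcal{X},\bar{x})$ fixing a point of $\tilde{X}$ has the form $\gamma_y(g\gamma_y^{-1})$ for some $g\in G$, some $y\in X^g$, and some path $\gamma_y$ from $x$ to $y$, and conversely every such expression fixes a point of $\tilde{X}$. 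This both pins down the set we must cover and guarantees that the representatives built below genuinely fix points of $\tilde{X}$, as the statement demands.

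Next I would construct $L$ explicitly. For each $g\in G$ and each component $Y\in\pi_0(X^g)$, choose once and for all a point $y_{g,Y}\in Y$ together with a path $\gamma_{g,Y}$ in $X$ from $x$ to $y_{g,Y}$, and set $\ell_{g,Y}:=\gamma_{g,Y}(g\gamma_{g,Y}^{-1})\in\pi_1(\mathcal{X},\bar{x})$. Define $L:=\{\ell_{g,Y}: g\in G,\ Y\in\pi_0(X^g)\}$. Finiteness of $L$ is immediate from $\abs{G}<\infty$ and the standing hypothesis $\abs{\pi_0(X^g)}<\infty$, and each $\ell_{g,Y}$ fixes a point of $\tilde{X}$ by the converse half of Lemma \ref{lem:everyElementFixingIsConjugate}.

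To finish, I would take an arbitrary $\gamma$ fixing a point of $\tilde{X}$, write it as $\gamma=\gamma_y(g\gamma_y^{-1})$ via Lemma \ref{lem:everyElementFixingIsConjugate}, and let $Y$ be the path-connected component of $X^g$ containing $y$. Since $y$ and $y_{g,Y}$ lie in the same component $Y$, and $\gamma_y,\gamma_{g,Y}$ are paths from $x$ to $y$ and $y_{g,Y}$ respectively, Lemma \ref{lem:PathConnectedComponents} applies verbatim and yields that $\gamma_y(g\gamma_y^{-1})$ is conjugate to $\ell_{g,Y}=\gamma_{g,Y}(g\gamma_{g,Y}^{-1})$ by an element of $\pi_1(X,x)$. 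That is exactly the asserted conclusion.

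I do not anticipate any serious obstacle here: the proposition is essentially a bookkeeping reduction to the two lemmas. The one point that could look delicate is that $\ell_{g,Y}$ is tied to a fixed base point $y_{g,Y}$ and a fixed connecting path, whereas $\gamma$ arises from an arbitrary $y$ and an arbitrary path $\gamma_y$; but Lemma \ref{lem:PathConnectedComponents} is precisely stated for arbitrary points of a common component and arbitrary paths joining them to $x$, so it simultaneously absorbs the freedom in the choice of $y$ within $Y$ and the freedom in the choice of path. The finiteness of $L$, which is the whole quantitative content, comes entirely from the hypothesis on $\pi_0(X^g)$ and the finiteness of $G$.
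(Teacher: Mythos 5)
Your proposal is correct and follows essentially the same route as the paper: both construct $L$ by choosing one representative $\gamma_y(g\gamma_y^{-1})$ for each pair $(g,Y)$ with $Y\in\pi_0(X^g)$, use Lemma \ref{lem:everyElementFixingIsConjugate} to identify the elements fixing a point of $\tilde{X}$ with such expressions, and use Lemma \ref{lem:PathConnectedComponents} to conjugate an arbitrary such element to the chosen representative by an element of $\pi_1(X,x)$. Your write-up is simply a more detailed version of the paper's argument.
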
 

\begin{proof}
By Lemma \ref{lem:PathConnectedComponents} for every element in $Y\in\pi_0(X^g)$ it suffices to fix an element $\gamma_y (g \gamma_y^{-1})$ with $y\in Y$. For every $g\in G$ and every element in $\pi_0(X^g)$ we pick such an element. We define $L$ as the set consisting of such elements. By Lemma \ref{lem:everyElementFixingIsConjugate} every such element fixes a point in $\tilde{X}$ and any other fixing a point will be conjugate of the element in $L$ corresponding to its connected component.
\end{proof}

\subsubsection{Proof that the homomorphism has finite kernel}
Let us return to the case of $k$-to\-po\-lo\-gi\-cal spaces $X_1,\ldots, X_k$  and let $G$ be a finite group acting on each one of them on the left as in \ref{ss:FundamentalQuoProd}. Proposition \ref{prop:NiIsNormallyFinitelyGenerated} gives us $k$ subsets $L(\mathcal{X}_i)\subset \pi_1(\mathcal{X}_i,\bar{x}_i)$  whose elements correspond to the element of $\pi_0(X_i^g)$ with $g\in G$. Now consider the subsets $L_i\subset L(\mathcal{X}_i)$ consisting of elements corresponding to $\pi_0(X_i^g)$ where $g$ fixes a point in $X_i$ for $i=1,\ldots,k$.

Recall that $N<\pi_1(\mathcal{X},\bar{x})$ (with $\mathcal{X}=[(X_1\times \cdots\times X_k)/G]$) is the subgroup generated by the inertia subgroups $I_y$ given by the action of $\pi_1(\mathcal{X},\bar{x})$ in $\tilde{X}$ and $N_i'<\pi_1(\mathcal{X}_i,\bar{x}_i)$ is the image of the $i$-projection of $N$. The following lemma is immediate from Proposition \ref{prop:NiIsNormallyFinitelyGenerated}

\begin{lem}\label{lem:LiGeneratesNi} We have that $N_i'=\left\langle \gamma_i l_i \gamma_i^{-1}\mid l_i\in L_i, \gamma_i\in \pi_1(X_i,x_i) \right\rangle$ in $\pi_1(\mathcal{X}_i,\bar{x}_i)$ for $i=1,\ldots,k$.
\end{lem}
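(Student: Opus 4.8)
The plan is to prove the two inclusions separately, exploiting the fact that, as recalled just before the statement, $N_i'$ is the image $p_i(N)$ of $N$ under the projection $p_i\colon\pi_1(\mathcal{X},\bar{x})\to\pi_1(\mathcal{X}_i,\bar{x}_i)$ induced by $\mathcal{X}\to\mathcal{X}_i$, and that $N$ is generated by the inertia subgroups $I_y$, whose elements are precisely the $\gamma_y(g\gamma_y^{-1})$ by Lemma \ref{lem:everyElementFixingIsConjugate}. The first thing I would record is the effect of $p_i$ on such a generator. Writing a point of the product as $y=(y_1,\ldots,y_k)$, we have $y\in X^g$ exactly when $y_j\in X_j^g$ for every $j$; in particular the group elements $g$ occurring among the generators of $N$ are exactly those fixing a point in every factor, which is the condition that singles out $L_i$ inside $L(\mathcal{X}_i)$. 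Choosing $\gamma_y=(\gamma_{y_1},\ldots,\gamma_{y_k})$ componentwise and reading off the definition of $p_i$ on the models $P(-,G,-)$, one checks directly that $p_i\!\left(\gamma_y(g\gamma_y^{-1})\right)=\gamma_{y_i}(g\gamma_{y_i}^{-1})$.

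For the inclusion $N_i'\subseteq\left\langle \gamma_i l_i\gamma_i^{-1}\mid l_i\in L_i,\ \gamma_i\in\pi_1(X_i,x_i)\right\rangle$, I would argue as follows. Each generator of $N$ projects, by the formula above, to an element $\gamma_{y_i}(g\gamma_{y_i}^{-1})$ with $y_i\in X_i^g$ and $g$ fixing a point in every factor. By Lemma \ref{lem:PathConnectedComponents} this element is conjugate, by an element of $\pi_1(X_i,x_i)$, to the chosen representative $l_i\in L_i$ of the path-connected component of $y_i$ in $\pi_0(X_i^g)$. Since $N_i'=p_i(N)$ is generated by the $p_i$-images of the generators of $N$, it is therefore contained in the subgroup generated by the elements $\gamma_i l_i\gamma_i^{-1}$.

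For the reverse inclusion I would first show $l_i\in N_i'$ for each $l_i\in L_i$, and then pass to conjugates by normality. Given $l_i\in L_i$ corresponding to $Y\in\pi_0(X_i^g)$ with $g$ fixing a point in every $X_j$, I would pick fixed points $y_j\in X_j^g$ for $j\neq i$ and a point $y_i\in Y$ realizing $l_i$, assemble $y=(y_1,\ldots,y_k)\in X^g$, and observe that $\gamma_y(g\gamma_y^{-1})$ lies in $N$ with $p_i$-image exactly $l_i$; hence $l_i\in N_i'$. Since $N_i'$ is normal in $\pi_1(\mathcal{X}_i,\bar{x}_i)$ by Lemma \ref{lem:N_iIsNormal} and $\pi_1(X_i,x_i)\subseteq\pi_1(\mathcal{X}_i,\bar{x}_i)$, every conjugate $\gamma_i l_i\gamma_i^{-1}$ with $\gamma_i\in\pi_1(X_i,x_i)$ again lies in $N_i'$, which yields the reverse inclusion.

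The only place where the word ``immediate'' deserves scrutiny — and what I regard as the main (if modest) obstacle — is the step, in the reverse inclusion, that uses the hypothesis that $g$ fixes a point in \emph{every} factor. This is precisely what allows a factor-wise fixed point $y_i$ to be lifted to a genuine fixed point $y$ of the diagonal action on $X$, and it is the reason $L_i$ is cut out by that condition rather than being all of $L(\mathcal{X}_i)$. Once this correspondence is in place, I expect the componentwise projection formula and the appeal to normality to be entirely routine.
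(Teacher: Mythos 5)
Your proof is correct and follows exactly the route the paper intends: the paper gives no argument beyond declaring the lemma ``immediate from Proposition~\ref{prop:NiIsNormallyFinitelyGenerated}'', and your two inclusions --- projecting generators $\gamma_y(g\gamma_y^{-1})$ of $N$ componentwise and conjugating into $L_i$ via Lemma~\ref{lem:PathConnectedComponents} for one direction, and assembling a diagonal fixed point from factorwise fixed points plus normality (Lemma~\ref{lem:N_iIsNormal}) for the other --- are precisely the content being suppressed. You also correctly isolate the one genuinely load-bearing point, namely that $g$ fixing a point in \emph{every} factor is what makes $L_i$, rather than all of $L(\mathcal{X}_i)$, the right generating set.
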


\begin{defn} Let us define 
$$C_i=C_i(\pi_1(X_i),L_i):=\left\langle\left\langle \gamma_i l_i \gamma_i^{-1} l_i^{-1} \mid \gamma_i\in \pi_1(X_i,x_i), l_i\in L_i \right\rangle \right\rangle_{\pi_1(\mathcal{X}_i,\bar{x}_i)},  $$ to be the normal subgroup generated by the commutators of elements in $\pi_1(X_i,x_i)$ and in $L_i$. Denote by $\mathbb{T}_i:=\pi_1(\mathcal{X}_i,\bar{x}_i)/C_i$ and by $\hat{L}_i$ the image of $L_i$ in $\mathbb{T}_i$. 
\end{defn}

\begin{lem} It happens that $C_i<N_i'$ and moreover we can consider $C_i$ as a subgroup of $N$ via $\{e\}\times \cdots \times C_i \times \ldots \times \{e\}$ and $C_1\times \cdots \times C_k< N$.
\end{lem}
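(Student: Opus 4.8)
The plan is to treat the two assertions separately, the inclusion $C_i<N_i'$ being essentially formal while the statement $C_1\times\cdots\times C_k<N$ carries the geometric content. For the first, I would observe that by Lemma \ref{lem:LiGeneratesNi} each generator $\gamma_i l_i\gamma_i^{-1}$ of $N_i'$ lies in $N_i'$, and taking $\gamma_i=e$ shows $l_i\in N_i'$ as well. Hence every generating commutator
\[
\gamma_i l_i\gamma_i^{-1}l_i^{-1}=(\gamma_i l_i\gamma_i^{-1})\cdot l_i^{-1}
\]
of $C_i$ is a product of two elements of $N_i'$, so it lies in $N_i'$. Since $N_i'$ is normal in $\pi_1(\mathcal{X}_i,\bar x_i)$ by Lemma \ref{lem:N_iIsNormal}, the normal closure $C_i$ of these commutators is contained in $N_i'$, giving $C_i<N_i'$.

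For the second assertion I would first check that the embeddings are well defined and then reduce to a claim about generators. Because $\gamma_i\in\ker\varphi_i=\pi_1(X_i,x_i)$, each generating commutator satisfies $\varphi_i(\gamma_i l_i\gamma_i^{-1}l_i^{-1})=\mathrm{Id}$, and normality of $\ker\varphi_i$ forces $C_i<\ker\varphi_i$. Consequently, for $c_j\in C_j$ every image $\varphi_j(c_j)$ is trivial, so $(c_1,\ldots,c_k)$ lies in the fiber product $\pi_1(\mathcal{X},\bar x)\cong\mathbb{H}$ of Lemma \ref{lem:OrbGroupIsFiberProduct}, and the embeddings $\{e\}\times\cdots\times C_i\times\cdots\times\{e\}$ make sense inside $\pi_1(\mathcal{X},\bar x)$. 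Writing $(c_1,\ldots,c_k)=\prod_i(e,\ldots,c_i,\ldots,e)$ reduces the problem to showing that each embedded $C_i$ lies in $N$. Since conjugation carries $I_y$ to $I_{ty}$, exactly as in the proof of Lemma \ref{lem:N_iIsNormal}, the subgroup $N$ is normal in $\pi_1(\mathcal{X},\bar x)$, so it suffices to place the generators $(e,\ldots,t_i(\gamma_i l_i\gamma_i^{-1}l_i^{-1})t_i^{-1},\ldots,e)$, with $t_i\in\pi_1(\mathcal{X}_i,\bar x_i)$, inside $N$.

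The heart of the argument is the construction of a single inertia element. Fix $l_i\in L_i$; by the choice of $L_i$ it is represented by $\gamma_{y_i}(g\gamma_{y_i}^{-1})$ for some $g\in G$ that admits a fixed point in \emph{every} factor and some $y_i\in X_i^g$. Choosing $y_j\in X_j^g$ and paths $\gamma_{y_j}$ for the remaining indices and setting $l_j:=\gamma_{y_j}(g\gamma_{y_j}^{-1})$, one has $\varphi_j(l_j)=g$ for all $j$, so $\ell:=(l_1,\ldots,l_k)\in\pi_1(\mathcal{X},\bar x)$; moreover, by Lemma \ref{lem:everyElementFixingIsConjugate} each $l_j$ fixes $\gamma_{y_j}\in\tilde X_j$, whence $\ell$ fixes $y:=(\gamma_{y_1},\ldots,\gamma_{y_k})\in\tilde X$ and therefore $\ell\in I_y<N$. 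Writing $\tilde\gamma:=(e,\ldots,\gamma_i,\ldots,e)$, which lies in $\pi_1(\mathcal{X},\bar x)$ since $\varphi_i(\gamma_i)=\mathrm{Id}$, a coordinate-wise computation yields
\[
\tilde\gamma\,\ell\,\tilde\gamma^{-1}\ell^{-1}=(e,\ldots,\gamma_i l_i\gamma_i^{-1}l_i^{-1},\ldots,e),
\]
which lies in $N$ because $\ell\in N$ and $N$ is normal. To absorb the outer conjugation by $t_i$ I would use surjectivity of each $\varphi_j$ to pick $t_j$ with $\varphi_j(t_j)=\varphi_i(t_i)$, so that $T:=(t_1,\ldots,t_k)\in\pi_1(\mathcal{X},\bar x)$; then conjugating the displayed element by $T$ produces $(e,\ldots,t_i(\gamma_i l_i\gamma_i^{-1}l_i^{-1})t_i^{-1},\ldots,e)$, again in $N$ by normality. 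This places every generator of the embedded $C_i$ in $N$, hence $C_1\times\cdots\times C_k<N$.

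The step I expect to be the main obstacle is this construction of the diagonal inertia element $\ell$ together with the commutator trick that strips off all coordinates but the $i$-th: it is precisely here that the meaning of $L_i$ — that the underlying $g$ fixes a point in every factor, not merely in $X_i$ — is indispensable, since otherwise there would be no point of $\tilde X$ fixed by an element projecting to $l_i$ and the embedded commutator could not be realized inside $N$. By contrast, the normality of $N$ and the lifting argument producing $T$ are routine and parallel Lemma \ref{lem:N_iIsNormal}.
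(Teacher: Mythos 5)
Your proof is correct and follows essentially the same route as the paper: the key step in both is to assemble, from $l_i\in L_i$, a diagonal element $\ell=(l_1,\ldots,l_k)$ lying in some inertia group $I_y<N$ (possible precisely because the underlying $g$ fixes a point in \emph{every} factor), and then to obtain $(e,\ldots,\gamma_i l_i\gamma_i^{-1}l_i^{-1},\ldots,e)$ as the commutator $\tilde\gamma\,\ell\,\tilde\gamma^{-1}\ell^{-1}\in N$, absorbing further conjugation via surjectivity of the $\varphi_j$. Your treatment is slightly more explicit than the paper's (the construction of $\ell$, the check that $C_i<\ker\varphi_i$ so the embedding into the fiber product makes sense, and the direct derivation of $C_i<N_i'$ from Lemma \ref{lem:LiGeneratesNi} rather than via the projection of $N$), but these are refinements of the same argument.
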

\begin{proof}
Let $l_i\in L_i$ and $\gamma_i\in\pi_1(X_i,x_i)$, the elements of $L_i$ were chosen such that there exists $l_j\in L_j$ and $y\in \tilde{X}$ such that $l=(l_1,\ldots,l_i,\ldots,l_k)\in I_y<N$. We have that $\gamma_i'=(e,\ldots,\gamma_i,\ldots,e)\in \pi_1(\mathcal{X},\bar{x})$ and as $N$ is normal in $\pi_1(\mathcal{X},\bar{x})$ we have that $\gamma_i' l \gamma_i'^{-1}\in N$, so 
$$\gamma_i'l \gamma_i'^{-1}l^{-1}=(e,\ldots,\gamma_i l_i \gamma_i^{-1} l_i^{-1},\cdots,e)\in N $$
This element projects to $[\gamma_i,l_i]\in C_i$. Finally given $\beta_i\in \pi_1(\mathcal{X}_i,\bar{x}_i)$, as every $\varphi_j$ is surjective, there exists $\beta_j\in \pi_1(\mathcal{X}_j,\bar{x}_j)$ such that $\varphi_i(\beta_i)=\varphi_j(\beta_j)$, so $\beta=(\beta_1,\ldots,\beta_k)\in \pi_1(\mathcal{X},\bar{x})$ and every conjugate of $[\gamma_i,l_i]$ can be seen as an element of $N$.

Finally, by considering the product of the identification of the elements in $C_j$ we have $C_1\times \cdots \times C_k< N$.
\end{proof}

Before stating the next lemma recall that $N<N_1'\times_G\cdots\times_G N_k'$.
\begin{lem}\label{lem:C_iHasFiniteIndexInN_i'}
The subgroup $C_i$ has finite index in $N_i'$, in particular $C_1\times \cdots\times C_k$ has finite index in $N_1'\times\cdots\times N_k'$ hence also in $N$. 
\end{lem}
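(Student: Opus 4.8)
The plan is to prove the first assertion, that $C_i$ has finite index in $N_i'$, by passing to the quotient $\mathbb{T}_i=\pi_1(\mathcal{X}_i,\bar{x}_i)/C_i$ and showing that the image $N_i'/C_i$ is a \emph{finite} group; the remaining assertions are then purely formal. First I would observe that, by Lemma \ref{lem:LiGeneratesNi}, $N_i'$ is generated by the conjugates $\gamma_i l_i\gamma_i^{-1}$ with $l_i\in L_i$ and $\gamma_i\in\pi_1(X_i,x_i)$. Since $\gamma_i l_i\gamma_i^{-1}=[\gamma_i,l_i]\,l_i$ and $[\gamma_i,l_i]\in C_i$ by definition of $C_i$, every such conjugate maps to $\hat{l}_i$ in $\mathbb{T}_i$. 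Hence $N_i'/C_i=\langle\hat{L}_i\rangle$ is generated by the \emph{finite} set $\hat{L}_i$, and each generator $\hat{l}_i$ has finite order: indeed $l_i$ comes from an inertia element and $\varphi_i$ is injective on inertia (Lemma \ref{lem:InertiaIsInjective}), so $l_i$, and a fortiori $\hat{l}_i$, has order dividing $\abs{G}$.

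Next I would exhibit a central subgroup of finite index in $N_i'/C_i$. Because $\varphi_i([\gamma_i,l_i])=[\varphi_i(\gamma_i),\varphi_i(l_i)]=\mathrm{Id}$ (as $\gamma_i\in\pi_1(X_i,x_i)=\ker\varphi_i$), the normal subgroup $C_i$ is contained in $\pi_1(X_i,x_i)$ and $\varphi_i$ descends to $\bar{\varphi}_i\colon\mathbb{T}_i\to G$ with kernel $Q_i:=\pi_1(X_i,x_i)/C_i$. Restricting $\bar{\varphi}_i$ to $N_i'/C_i$ yields a homomorphism onto a subgroup of $G$ whose kernel is $Z:=(N_i'/C_i)\cap Q_i$, so $Z$ has index at most $\abs{G}$ in $N_i'/C_i$. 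Moreover $Z$ is central in $N_i'/C_i$: by the very definition of $C_i$, each $\hat{l}_i$ commutes with every element of $Q_i$, so the whole group $N_i'/C_i=\langle\hat{L}_i\rangle$ centralizes $Q_i$, and in particular it centralizes $Z\subset Q_i$.

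Thus $N_i'/C_i$ is a finitely generated group containing a central subgroup of finite index. The main point is to deduce genuine finiteness, and here I would invoke Schur's theorem: since the center has finite index, the commutator subgroup $[N_i'/C_i,N_i'/C_i]$ is finite. On the other hand the abelianization $(N_i'/C_i)^{\mathrm{ab}}$ is a finitely generated abelian group generated by the images of the torsion elements $\hat{l}_i$, hence is finite. A group with finite commutator subgroup and finite abelianization is finite, so $N_i'/C_i$ is finite and $C_i$ has finite index in $N_i'$. I expect this final group-theoretic step --- extracting finiteness from the three ingredients (finite generation, torsion generators, and a finite-index center) --- to be the only real obstacle; everything preceding it is bookkeeping inside $\mathbb{T}_i$.

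Finally, for the remaining claims, the finiteness of each index $[N_i':C_i]$ gives
\[
\left[\prod_{i=1}^k N_i':\prod_{i=1}^k C_i\right]=\prod_{i=1}^k[N_i':C_i]<\infty,
\]
so $C_1\times\cdots\times C_k$ has finite index in $N_1'\times\cdots\times N_k'$. Since $C_1\times\cdots\times C_k< N< N_1'\times_G\cdots\times_G N_k'\subset N_1'\times\cdots\times N_k'$, the intermediate group $N$ likewise contains $C_1\times\cdots\times C_k$ with finite index.
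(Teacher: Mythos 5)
Your argument is correct and follows essentially the same route as the paper: pass to $\mathbb{T}_i$, identify $N_i'/C_i$ with $\langle\hat{L}_i\rangle$, and observe that it is generated by finitely many torsion elements and has a finite-index central subgroup coming from $\ker\varphi_i$. The only difference is that the paper concludes by citing the group-theoretic fact from Bauer--Catanese--Grunewald--Pignatelli (their Lemma~4.6), whereas you re-prove it inline via Schur's theorem; both are fine.
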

\begin{proof}
First note that by Lemma \ref{lem:LiGeneratesNi} and by definition of $\mathbb{T}_i$ we have
\[N_i'/C_i=\left\langle \left\langle L_i\right\rangle \right\rangle_{\pi_1(X,x)}/C_i \cong \langle \langle \hat{L}_i \rangle\rangle_{R_i}=\langle \hat{L}_i\rangle,\]
with $R_i$ the image of $\pi_1(X_i,x_i)$ in $\mathbb{T}_i$.

 Moreover as $\varphi(C_i)=\{e\}$ we have that $C_i<\ker \varphi\cong\pi_1(X_i,x_i)$. As $\pi_1(X_i,x_i)$ has finite index in $\pi_1(\mathcal{X}_i,\bar{x}_i)$, it follows that $R_i$ has finite index in $\mathbb{T}_i$, which implies that $R_i\cap\langle\hat{L}_i \rangle$ has finite index in $\langle\hat{L}_i \rangle$. Note that $\langle\hat{L}_i \rangle$ is generated by a finite number of torsion elements and that by construction $R_i\cap\langle \hat{L}_i \rangle$ is a central group in $\langle \hat{L}_i \rangle$.  As any group generated by a finite number of torsion elements and such that the center has finite index is finite (see \cite[Lemma~4.6]{CataneseBauer1}) the result follows.
\end{proof}
 
\begin{thm}\label{thm:FiniteKernel} The homomorphism $\pi_1(X/G,[x])\to \prod_{i=1}^k \pi_1(\mathcal{X}_i,\bar{x}_i)/N_i'$ has finite kernel.
\end{thm}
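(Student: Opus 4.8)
The plan is to identify the kernel of the homomorphism explicitly and then reduce its finiteness to Lemma~\ref{lem:C_iHasFiniteIndexInN_i'}, where the genuinely substantive input already resides. Denote the homomorphism by $\Phi$. It is read off from the diagram~\eqref{eq:SemiCommutativeD}: an element $[\gamma]\in\pi_1(X/G,[x])=\pi_1(\mathcal{X},\bar{x})/N$ is sent to the class of a lift $\gamma\in\pi_1(\mathcal{X},\bar{x})\subset\prod_{i=1}^k\pi_1(\mathcal{X}_i,\bar{x}_i)$ inside $\prod_{i=1}^k\pi_1(\mathcal{X}_i,\bar{x}_i)/N_i'$. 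First I would check that $\Phi([\gamma])$ is trivial precisely when the lift $\gamma=(\gamma_1,\ldots,\gamma_k)$ has every component $\gamma_i\in N_i'$; that is, $\ker\Phi$ is the image in $\pi_1(\mathcal{X},\bar{x})/N$ of
\[\pi_1(\mathcal{X},\bar{x})\cap\prod_{i=1}^k N_i'.\]

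Next I would unwind this intersection using the fiber-product description $\pi_1(\mathcal{X},\bar{x})\cong\pi_1(\mathcal{X}_1,\bar{x}_1)\times_G\cdots\times_G\pi_1(\mathcal{X}_k,\bar{x}_k)$ from Lemma~\ref{lem:OrbGroupIsFiberProduct}. An element of the intersection is a tuple $(\gamma_1,\ldots,\gamma_k)$ with each $\gamma_i\in N_i'$ and $\varphi_1(\gamma_1)=\cdots=\varphi_k(\gamma_k)$, which is exactly the fiber product $N_1'\times_G\cdots\times_G N_k'$. Since $N<N_1'\times_G\cdots\times_G N_k'$, the kernel is therefore
\[\ker\Phi\cong\left(N_1'\times_G\cdots\times_G N_k'\right)/N,\]
and it remains to prove that $N$ has finite index in $N_1'\times_G\cdots\times_G N_k'$.

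This is where Lemma~\ref{lem:C_iHasFiniteIndexInN_i'} does the work. By that lemma $C_1\times\cdots\times C_k$ has finite index in $N_1'\times\cdots\times N_k'$, and we have the chain of inclusions
\[C_1\times\cdots\times C_k<N<N_1'\times_G\cdots\times_G N_k'<N_1'\times\cdots\times N_k'.\]
Since the smallest group already has finite index in the largest, it has finite index in every intermediate group; in particular $[N_1'\times_G\cdots\times_G N_k':N]<\infty$. Hence $\ker\Phi$ is finite.

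I do not anticipate a serious obstacle at this stage: all the analytic content---that $C_i$ has finite index in $N_i'$, which rests on the finiteness of a group generated by finitely many torsion elements with a finite-index center (\cite[Lemma~4.6]{CataneseBauer1})---has already been secured. The only points demanding care are the bookkeeping that identifies $\ker\Phi$ with $\left(N_1'\times_G\cdots\times_G N_k'\right)/N$, in particular verifying that $N$ genuinely lands inside the fiber product so that this quotient is meaningful, and transferring the finite-index statement correctly along the inclusion chain.
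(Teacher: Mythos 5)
Your proposal is correct and follows essentially the same route as the paper: identify $\ker\Phi$ with $\left(N_1'\times_G\cdots\times_G N_k'\right)/N$ via the fiber-product description of $\pi_1(\mathcal{X},\bar{x})$, then invoke Lemma~\ref{lem:C_iHasFiniteIndexInN_i'} on $C_1\times\cdots\times C_k$ to get finiteness. The paper phrases the last step as a quotient of two finite quotients by $C_1\times\cdots\times C_k$ rather than your inclusion chain, but this is the same argument.
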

\begin{proof}By composing the quotient map $\prod_{i=1}^k \pi_1(\mathcal{X}_i,\bar{x}_i)\to \prod_{i=1}^k \pi_1(\mathcal{X}_i,\bar{x}_i)/N_i'$ with the inclusion $\pi_1(\mathcal{X},\bar{x})\to \prod_{i=1}^k\pi_1(\mathcal{X}_i,\bar{x}_i)$ we obtain $\pi_1(\mathcal{X},\bar{x})\to \prod_{i=1}^k \pi_1(\mathcal{X}_i,\bar{x}_i)/N_i'$ with kernel $(N_1'\times\cdots\times N_k')\cap \pi_1(\mathcal{X},\bar{x})=N_1'\times_G\cdots\times_G N_k'$ by the description of $\pi_1(\mathcal{X},\bar{x})$ as fiber product. We put this as a row in the following commutative diagram together with a vertical column given by Lemma \ref{lem:QuotientFixed} and complete to
\begin{equation}
\begin{tikzcd}
  & 1 \arrow[d] & 1\arrow[d] &  \\
 & N \arrow[r,"\mathrm{id}"] \arrow[d] & N \arrow[d] & \\
1 \arrow[r] & N_1'\times_G\cdots\times_G N_k' \arrow[r] \arrow[d] & \pi_1(\mathcal{X},\bar{x}) \arrow[d] \arrow[r] & \prod_{i=1}^k \pi_1(\mathcal{X}_i,\bar{x}_i)/N_i' \arrow[d,"\mathrm{id}"]  \\
1 \arrow[r] &  N_1'\times_G\cdots\times_G N_k'/N \arrow[d] \arrow[r]& \pi_1(X/G,[x])\arrow[d] \arrow[r]  &  \prod_{i=1}^k \pi_1(\mathcal{X}_i,\bar{x}_i)/N_i' \\
 & 1 & 1 & 
\end{tikzcd}
\end{equation}
By Lemma \ref{lem:C_iHasFiniteIndexInN_i'} both $N_1'\times_G\cdots\times_G N_k'/C_1\times \cdots\times C_k$ and $N/C_1\times \cdots\times C_k$ are finite hence 
$$\frac{N_1'\times_G\cdots\times_G N_k'/C_1\times \cdots\times C_k}{N/C_1\times \cdots\times C_k}\cong N_1'\times_G\cdots\times_G N_k'/N   $$ is finite.

\end{proof}
\subsubsection{Geometric interpretation of the groups $\pi_1(\mathcal{X}_i,\bar{x}_i)/N_i'$}
Let us denote by $I$, the subgroup of $G$ generated by the elements having a fixed point in every $X_i$ for $i=1,\ldots, k$. Note that $I$ is a normal subgroup.

Let $x'_i$ denote the class of $x_i$ in $X/I$ and $\bar{x}'_i$ the image of $x'_i$ in $[(X_i/I)/(G/I)]$.
\begin{prop}\label{prop:Quotient} There is an isomorphism $$\pi_1(\mathcal{X}_i,\bar{x}_i)/N_i'\overset{\sim}{\longrightarrow} \pi_1([(X_i/I)\left /(G/I)\right.],\bar{x}'_i).$$ 
\end{prop}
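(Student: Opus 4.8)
The plan is to realize the stated isomorphism as the map on fundamental groups induced by a single morphism of stacks, and then to read off surjectivity and the kernel from a comparison of the two fibration exact sequences of \S\ref{ss:FiberHomotopyES}. First I would produce the morphism. Since $I\lhd G$, the group $G/I$ acts on the coarse quotient $X_i/I$, and the quotient map $X_i\to X_i/I$ is $G$-equivariant for this action (with $G$ acting on the target through $G/I$); equivalently, using the iterated quotient $\mathcal{X}_i=[X_i/G]\cong[[X_i/I]/(G/I)]$, the coarse-space map $[X_i/I]\to X_i/I$ is $(G/I)$-equivariant. Either way it descends to a morphism $\mathcal{X}_i\to[(X_i/I)/(G/I)]$, and I take the induced homomorphism $\Phi\colon\pi_1(\mathcal{X}_i,\bar{x}_i)\to\pi_1([(X_i/I)/(G/I)],\bar{x}_i')$. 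Composing $\varphi_i$ with $G\to G/I$ gives a surjection $\bar{\varphi}_i\colon\pi_1(\mathcal{X}_i,\bar{x}_i)\to G/I$ whose kernel is $\varphi_i^{-1}(I)$, which by the fibration sequence for $I\hookrightarrow G$ is canonically $\pi_1([X_i/I],\bar{x}_i)$. This yields a ladder
\[
\begin{tikzcd}
1 \arrow[r] & \pi_1([X_i/I],\bar{x}_i) \arrow[d,"\psi"] \arrow[r] & \pi_1(\mathcal{X}_i,\bar{x}_i) \arrow[d,"\Phi"] \arrow[r,"\bar{\varphi}_i"] & G/I \arrow[d,"\mathrm{id}"] \arrow[r] & 1 \\
1 \arrow[r] & \pi_1(X_i/I,x_i') \arrow[r] & \pi_1([(X_i/I)/(G/I)],\bar{x}_i') \arrow[r] & G/I \arrow[r] & 1
\end{tikzcd}
\]
in which $\psi$ is precisely the homomorphism induced by the coarse quotient $[X_i/I]\to X_i/I$.

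Next I would use Lemma \ref{lem:QuotientFixed}, applied to the action of $I$ on $X_i$: it shows that $\psi$ is surjective and that its kernel is the subgroup $\mathbf{I}_i\lhd\pi_1([X_i/I],\bar{x}_i)$ generated by the elements acting with a fixed point on $\tilde{X}_i$, i.e.\ the full $I$-inertia. A snake-lemma chase in the ladder (the right vertical arrow is the identity and $\psi$ is surjective) then gives that $\Phi$ is surjective with $\ker\Phi=\mathbf{I}_i$, viewed inside $\pi_1(\mathcal{X}_i,\bar{x}_i)$. Hence $\Phi$ factors through an isomorphism $\pi_1(\mathcal{X}_i,\bar{x}_i)/\mathbf{I}_i\xrightarrow{\sim}\pi_1([(X_i/I)/(G/I)],\bar{x}_i')$, and the Proposition is reduced to the purely group-theoretic identity of subgroups
\[
N_i'=\mathbf{I}_i \qquad\text{in}\qquad \pi_1(\mathcal{X}_i,\bar{x}_i).
\]

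The equality $N_i'=\mathbf{I}_i$ is the crux, and I expect it to be the main obstacle rather than the formal diagram chase. One inclusion is immediate: $N_i'$ is generated by inertia of elements of $S$ (those with a fixed point in \emph{every} $X_j$), and $S\subseteq I$, so every generator of $N_i'$ already fixes a point of $\tilde{X}_i$, whence $N_i'\subseteq\mathbf{I}_i$. The content is the reverse inclusion $\mathbf{I}_i\subseteq N_i'$: I must show that the inertia produced by an arbitrary $h\in I$ with a fixed point in $X_i$ is generated by $S$-inertia. The plan is to apply Lemma \ref{lem:everyElementFixingIsConjugate} and Proposition \ref{prop:NiIsNormallyFinitelyGenerated} to the $I$-action on $X_i$, which describe $\mathbf{I}_i$ as generated, up to $\pi_1(X_i,x_i)$-conjugacy, by the finitely many classes $\gamma_y(h\gamma_y^{-1})$ attached to the components of the loci $X_i^h$ for $h\in I$; the finiteness of $\pi_0(X_i^h)$ is what makes this generating set finite. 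Writing $h=s_1\cdots s_m$ with $s_\ell\in S$ (possible since $I=\langle S\rangle$), I would then attempt to rewrite $\gamma_y(h\gamma_y^{-1})$ as a product of the corresponding $S$-inertia generators along paths joining the relevant fixed loci, using the normality of $N_i'$ (Lemma \ref{lem:N_iIsNormal}) to absorb the conjugations. In essence this step asks that the stabilizer data of $I$ on $\tilde{X}_i$ be generated by the stabilizer data of $S$; this is the delicate heart of the argument, and it is where I would concentrate the effort and scrutinize whether the hypotheses in force are enough to force the decomposition.
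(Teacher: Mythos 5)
Your construction of the homomorphism and the identification of its kernel match the paper's proof up to packaging: the paper produces the map by pushing explicit representatives in $P(X_i,G,x_i)$ forward along $X_i\to X_i/I$ rather than by a morphism of stacks and a ladder of fibration sequences, but both arguments arrive at the same place, namely that the kernel is the full $I$-inertia subgroup $\mathbf{I}_i=N_{[X_i/I]}$ of Lemma \ref{lem:QuotientFixed} applied to the $I$-action on $X_i$, so that the Proposition is equivalent to the equality $N_i'=\mathbf{I}_i$. You prove $N_i'\subseteq\mathbf{I}_i$ and leave the reverse inclusion open. Be aware that the paper does not close it either: its proof ends with the unargued phrase ``noticing that $N_{[X_i/I]}=N_i'$''. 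The step you single out as the delicate heart is therefore precisely the step the published proof skips.

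Your hesitation is warranted: the inclusion $\mathbf{I}_i\subseteq N_i'$ can fail, so the rewriting of $\gamma_y(h\gamma_y^{-1})$ in terms of $S$-inertia that you propose cannot succeed in general --- the discrepancy between that element and any product of $s_\ell$-inertia elements lies in $\pi_1(X_i,x_i)$ and need not belong to $N_i'$. Concretely, take $G=\mathbb{Z}/2\times\mathbb{Z}/2=\{1,a,b,c\}$, let $X_1=K_4$ be the complete graph on the vertex set $G$ with the left-translation action (each nontrivial $g$ fixes exactly the midpoints of the two edges of the form $\{x,gx\}$), and let $X_2$ be the $4$-cycle obtained from $K_4$ by deleting the two $c$-labelled edges, so that $a$ and $b$ still have fixed points on $X_2$ but $c$ acts freely there. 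Then $S=\{1,a,b\}$ and $I=G$; one computes $\pi_1(\mathcal{X}_1,\bar{x}_1)\cong\mathbb{Z}/2*\mathbb{Z}/2*\mathbb{Z}/2=\langle\alpha,\beta,\gamma\rangle$ with $\alpha,\beta,\gamma$ inertia elements over $a,b,c$, the elements fixing points of $\tilde{X}_1$ (a trivalent tree) being exactly the conjugates of these three generators. Hence $N_1'=\left\langle\left\langle\alpha,\beta\right\rangle\right\rangle$ and $\pi_1(\mathcal{X}_1,\bar{x}_1)/N_1'\cong\mathbb{Z}/2$, while $(X_1/I)/(G/I)=K_4/G$ is a simply connected tripod, so the right-hand side of the asserted isomorphism is trivial. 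The missing inclusion is thus not merely unproved but false without further hypotheses (for instance, that every $h\in I$ with $X_i^h\neq\emptyset$ already has a fixed point in every factor). Any completion of your argument --- or of the paper's --- must either add such a hypothesis or replace $N_i'$ by $\mathbf{I}_i$ in the statement.
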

 
\begin{proof}
Observe that the action of $G$ on $X_i$ descends to an action of $G/I$ on $X_i/I$ and therefore we can define $[(X_i/I)\left / (G/I)\right.]$. Recall by the previous subsection \ref{ss:DescriptionOfOrbispaces} that $\pi_1(\mathcal{X}_i,\bar{x}_i)$ can be identified with the set of path-components of $P(X_i,G,x)$. Therefore an element $[\gamma]\in \pi_1(\mathcal{X}_i,\bar{x}_i)$ can be represented by a path $\gamma$ in $X_i$ starting at $x_i$ and finishing at $gx_i$ for some $g\in G$. Denote by $p_i:X_i\to X_i/I$ the quotient map. By considering $p_i(\gamma)$, we obtain a morphism between $\pi_1(\mathcal{X}_i,\bar{x}_i)$ and $\pi_1([(X_i/I/G/I)],\bar{x}'_i)$. 

It is immediate to see that the paths coming from the inertia of $I$ in $X_i$, that is, the elements of the form $\gamma_y (g \gamma_y^{-1})$ with $g\in I$ and $y\in X_i^g$, are sent to the trivial element in $\pi_1(X_i/I,x_i')$.

Now consider $\gamma\in \ker\left(\pi(\mathcal{X}_i,\bar{x}_i)\to \pi_1([(X_i/I)\left/\right. (G/I) ],\bar{x}'_i)\right)$. Then $\gamma$ is represented by a path in $X_i$, which we still denote by $\gamma$, starting at $x_i$ and finishing at $gx_i$ with $g\in G$. Note that moreover $g\in I$, otherwise by the projection $\pi_1([X_i/I\left/ G/I \right.],\bar{x}'_i)\to G$ the element would be sent to a non-zero element. Hence the image of $\gamma$ lies in $\pi_1(X_i/I,x_i')$ and it is trivial. By the exact sequence 
$$1\longrightarrow N_{[X_i/I]}\longrightarrow \pi_1([X_i/I],\bar{x}_i)\longrightarrow \pi_1(X/I,x_i')\to 1 $$
and noticing that $N_{[X_i/I]}=N_i'$ we have that $\gamma\in N_i'$ which proves the result.
\end{proof}

\section{Applications}\label{section:Applications}

\subsection{Product of the same topological space} Now let us describe a case where $N_i'$ equals the whole subgroup $N_i$ generated by the elements having a fixed point in the universal cover.
\begin{cor} Let $X_i=X_1$ for $i=2,\ldots,k$ and $G$ finite acting on $X_1$. Then the morphism 
\[\pi_1\left((X_1\times \cdots\times X_1)/G,[x]\right)\longrightarrow \prod_{i=1}^k \pi_1(X_1/G,[x_i])\] has finite kernel.
\end{cor}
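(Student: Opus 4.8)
The plan is to reduce the statement to the single equality $N_i'=N_i$ announced in the heading of this subsection, where $N_i<\pi_1(\mathcal{X}_i,\bar{x}_i)$ denotes the full subgroup generated by \emph{all} elements having a fixed point in the universal cover $\tilde{X}_1$. Granting this, everything follows from two results already in hand: Theorem \ref{thm:FiniteKernel}, which gives a finite-kernel homomorphism $\pi_1(X/G,[x])\to\prod_{i=1}^k\pi_1(\mathcal{X}_i,\bar{x}_i)/N_i'$, and Lemma \ref{lem:QuotientFixed} applied to the single-space quotient $\mathcal{X}_i=[X_1/G]$, which identifies $\pi_1(\mathcal{X}_i,\bar{x}_i)/\mathbf{I}$ with $\pi_1(X_1/G,[x_i])$. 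Since $N_i=\mathbf{I}$ is exactly the subgroup generated by elements with fixed points in the universal cover, the equality $N_i'=N_i$ converts the target $\prod_{i=1}^k\pi_1(\mathcal{X}_i,\bar{x}_i)/N_i'$ of Theorem \ref{thm:FiniteKernel} into $\prod_{i=1}^k\pi_1(X_1/G,[x_i])$, and the composed map is the one in the statement.

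The key step is therefore to establish $N_i'=N_i$. First I would note that, since every factor equals $X_1$, an element $g\in G$ has a fixed point in every $X_j$ precisely when it has a fixed point in $X_1$; so the finite generating data $L(\mathcal{X}_i)$ supplied by Proposition \ref{prop:NiIsNormallyFinitelyGenerated}, which ranges over all components $Y\in\pi_0(X_1^g)$ for all $g\in G$, already consists entirely of classes arising from elements with fixed points. The content is to check that each such class is genuinely recorded by the diagonal isotropy, i.e. that $L_i=L(\mathcal{X}_i)$. Given $g$ with $X_1^g\neq\emptyset$ and a component $Y\in\pi_0(X_1^g)$, I would pick $y_i\in Y$ and, for $j\neq i$, an arbitrary fixed point $y_j\in X_1^g$; then $y=(y_1,\ldots,y_k)$ is fixed by the diagonal $g$-action on $X_1\times\cdots\times X_1$, and by Lemma \ref{lem:everyElementFixingIsConjugate} the element $\gamma_y(g\gamma_y^{-1})\in\pi_1(\mathcal{X},\bar{x})$ fixes a point in $\tilde{X}$, hence lies in some isotropy group $I_y<N$. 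Its $i$-th component is $\gamma_{y_i}(g\gamma_{y_i}^{-1})$, the class attached to $Y$. Thus every element of $L(\mathcal{X}_i)$ occurs as an $i$-th component of a diagonal isotropy element, so $L_i=L(\mathcal{X}_i)$, and Lemma \ref{lem:LiGeneratesNi} together with Proposition \ref{prop:NiIsNormallyFinitelyGenerated} gives $N_i'=\langle\langle L_i\rangle\rangle_{\pi_1(X_1,x_i)}=N_i$.

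The hard part is precisely this verification that no fixed-point contribution is lost in passing from the single space $X_1$ to the diagonal product, i.e. the inclusion $N_i\subset N_i'$: a priori $N_i'$ only sees fixed-point data compatible with a \emph{simultaneous} fixed point of the diagonal action, and the argument works only because all factors coincide, letting a fixed point of $g$ in one factor be spread to all factors at once. Once $N_i'=N_i$ is secured, the conclusion is formal: substituting the isomorphisms $\pi_1(\mathcal{X}_i,\bar{x}_i)/N_i'\cong\pi_1(X_1/G,[x_i])$ into the finite-kernel homomorphism of Theorem \ref{thm:FiniteKernel} yields a finite-kernel homomorphism $\pi_1((X_1\times\cdots\times X_1)/G,[x])\to\prod_{i=1}^k\pi_1(X_1/G,[x_i])$, as claimed. (Equivalently, one may phrase this through Proposition \ref{prop:Quotient} and the observation that $(X_1/I)/(G/I)\cong X_1/G$, but the direct route via $N_i'=N_i$ is cleaner here.)
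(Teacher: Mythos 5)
Your proposal is correct and follows essentially the same route as the paper: both reduce the statement to the equality $N_1'=N_1$ and establish the nontrivial inclusion $N_1\subset N_1'$ by spreading a fixed point of $g$ in $X_1$ diagonally to a fixed point in $\tilde{X}_1\times\cdots\times\tilde{X}_1$, then conclude via Theorem \ref{thm:FiniteKernel}. The only cosmetic difference is that you argue at the level of the generating sets $L_i=L(\mathcal{X}_i)$ while the paper manipulates the isotropy elements $\gamma_{1_j}\in I_{y_{1_j}}$ directly; the paper also records the alternative you mention in passing, via Proposition \ref{prop:Quotient} and $(X_1/I)/(G/I)\cong X_1/G$.
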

\begin{proof}
We only have to show that $N_1'=N_1$ and then we obtain the result by applying Theorem \ref{thm:FiniteKernel}. By construction we have that $N_1'\subset N_1$. Let us show the inverse inclusion. Take $\gamma_1\in N_1$, then we can write $\gamma_1=\gamma_{1_1}\cdots \gamma_{1_l}$ such that there exists $y_{1_j}\in \tilde{X}_1$ satisfying $\gamma_{1_j}\in I_{y_{1_j}}$ for $j=1,\ldots, l$. As $\tilde{X}=\tilde{X_1}\times\cdots\times \tilde{X}_k$ by taking $y_j=(y_{1_j},\ldots,y_{1_j})\in \tilde{X}$ we have that $\gamma^j=(\gamma_{1_j},\ldots,\gamma_{1_j})\in I_{y_j}$ and therefore $\gamma=\gamma^1\cdots \gamma^l \in N$ and the image of $\gamma$ in $N_1$ equals $\gamma_1$.
\end{proof}
Another proof using Proposition \ref{prop:Quotient} can be obtained as follows: the action of $G/I$ is free in $X_1/I$ and since $X_1/G \cong (X_1/I)\left / (G/I) \right.$ we have $\pi_1\left([(X_1/I)\left/ (G/I)\right.]\right)\cong\pi_1(X_1/G)$.
\subsection{Second Main Theorem}
\begin{thm}\label{thm:MainThm2}
Let $X_1,\ldots,X_k$ admit a universal cover and let $G$ be a finite group acting on each of them such that $\abs{\pi_0(X_i^g)}<+\infty$ for every $g\in G$ and $i=1,\ldots,k$. Denote $X=X_1\times
\cdots\times X_k$ and consider the diagonal action of $G$ on it. Suppose $\pi_1(X/G,[x])$ is residually finite, then $\pi_1(X/G,[x])$ has a normal finite-index subgroup $\mathcal{N}\cong H_1\times \cdots \times H_k$ isomorphic to a product of normal finite index subgroups subgroups $H_i\lhd \pi_1(X_i/I,[x_i])$.
\end{thm}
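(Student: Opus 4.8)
The plan is to fuse the three results already at hand into a single homomorphism, use residual finiteness to discard its finite kernel, and then extract the product structure by a coordinatewise argument inside a direct product. Concretely, by Proposition \ref{prop:FiniteIndex} and Theorem \ref{thm:FiniteKernel} the map
\[\Phi\colon \pi_1(X/G,[x])\longrightarrow P:=\prod_{i=1}^k \pi_1(\mathcal{X}_i,\bar{x}_i)/N_i'\]
has finite kernel $E$ and finite-index image, and by Proposition \ref{prop:Quotient} each factor is $B_i:=\pi_1(\mathcal{X}_i,\bar{x}_i)/N_i'\cong \pi_1([(X_i/I)/(G/I)],\bar{x}_i')$. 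Writing $A_i:=\pi_1(X_i/I,[x_i])$, the fibration sequence of \S\ref{ss:FiberHomotopyES} for the $G/I$-action on $X_i/I$ gives $1\to A_i\to B_i\to G/I\to 1$, so $A_i\lhd B_i$ is normal of finite index. I would also record that the $i$-th projection $P\to B_i$ carries $\Phi(\pi_1(X/G,[x]))$ \emph{onto} all of $B_i$: indeed $\pi_1(\mathcal{X},\bar{x})\cong\mathbb{H}$ surjects onto each $\pi_1(\mathcal{X}_i,\bar{x}_i)$ (the fibre product surjects onto its factors because every $\varphi_i$ is onto), and $B_i$ is a quotient of $\pi_1(\mathcal{X}_i,\bar{x}_i)$.

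Next I would invoke residual finiteness. Since $\pi_1(X/G,[x])$ is residually finite and $E$ is finite, intersecting the finitely many finite-index normal subgroups avoiding the nontrivial elements of $E$ produces a normal finite-index subgroup $K\lhd \pi_1(X/G,[x])$ with $K\cap E=\{1\}$. Then $\Phi|_K$ is injective, so $K\cong K':=\Phi(K)$, and $K'$ is a finite-index subgroup of $P$ that is normal in $\Phi(\pi_1(X/G,[x]))$. As a product of normal subgroups, $\prod_i A_i\lhd P$, so
\[K'':=K'\cap\textstyle\prod_i A_i\]
is again normal in $\Phi(\pi_1(X/G,[x]))$ and of finite index in $K'$.

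The final and most delicate step extracts the product. For each $i$ set $H_i:=K''\cap\bigl(\{1\}\times\cdots\times A_i\times\cdots\times\{1\}\bigr)$, the slice of $K''$ supported in the $i$-th coordinate; embedding $A_i$ into $P/K''$ shows $[A_i:H_i]\le[\,P:K''\,]<\infty$, so $H_i$ has finite index in $A_i$. Slices in distinct coordinates commute and meet trivially, so the subgroup they generate is the internal direct product $H_1\times\cdots\times H_k\le K''$, again of finite index. For normality, observe that conjugation in the direct product $P$ acts coordinatewise and that every element of $\Phi(\pi_1(X/G,[x]))$ has $i$-th coordinate in $B_i$, which normalizes $A_i$; since $K''$ is normalized by $\Phi(\pi_1(X/G,[x]))$, conjugating $H_i=K''\cap A_i$ by such an element lands back in $K''\cap A_i=H_i$. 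Because the $i$-th coordinates exhaust all of $B_i$, each $H_i$ is normalized by the whole $B_i$, hence $H_i\lhd A_i=\pi_1(X_i/I,[x_i])$, and $H_1\times\cdots\times H_k\lhd \Phi(\pi_1(X/G,[x]))$. Finally I would pull back through the isomorphism $\Phi|_K$: the subgroup $\mathcal{N}:=(\Phi|_K)^{-1}(H_1\times\cdots\times H_k)$ is a finite-index subgroup of $\pi_1(X/G,[x])$ isomorphic to $H_1\times\cdots\times H_k$, and it is normal because for $g\in\pi_1(X/G,[x])$ and $n\in\mathcal{N}$ one has $gng^{-1}\in K$ (as $K$ is normal) while $\Phi(gng^{-1})\in H_1\times\cdots\times H_k$, so injectivity of $\Phi|_K$ forces $gng^{-1}\in\mathcal{N}$.

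The main obstacle I anticipate is precisely this simultaneous bookkeeping of normality and product structure: I must produce one subgroup that is at once a direct product, normal in $\pi_1(X/G,[x])$, and has each factor normal in $\pi_1(X_i/I,[x_i])$. Before residual finiteness is used, $\pi_1(X/G,[x])$ is only a finite extension of a group that is virtually a product, and such extensions need not split, so the entire strategy hinges on first passing to $K$ to turn $\Phi$ into an embedding and then exploiting that conjugation in a direct product respects the factor decomposition.
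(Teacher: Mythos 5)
Your proof is correct and follows essentially the same route as the paper's: compose the finite-kernel, finite-index-image homomorphism with the identifications of Proposition \ref{prop:Quotient}, use residual finiteness to find a finite-index normal subgroup $K$ (the paper's $\Gamma$) meeting the kernel trivially, and then intersect the image with the coordinate slices of $\prod_i\pi_1(X_i/I,[x_i])$ before pulling back. The only real difference is in how normality of the $H_i$ is secured --- the paper simply replaces the slice $\Theta(\Gamma)_i$ by a finite-index normal core inside $\pi_1([(X_i/I)/(G/I)])$, whereas you show the slice $K''\cap A_i$ is already normal by observing that the image of $\Phi$ surjects onto each factor $B_i$; both arguments are valid.
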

\begin{proof}
By Theorem \ref{thm:Int2} we get a morphism $\Theta:\pi_1(X/G,[x])\to \prod_{i=1}^k \pi_1([X_i/I/G/I])$ having finite kernel $E$. As $\pi_1(X/G,[x])$ is residually finite we can construct a finite-index normal subgroup $\Gamma \lhd \pi_1(X/G,[x])$ such that $\Gamma\cap E=\{e\}$.

The morphism $\Theta|_\Gamma:\Gamma\to \prod_{i=1}^k \pi_1([(X_i/I)\left/ (G/I) \right.],\bar{x}'_i)$ is therefore injective and moreover as the subgroup $\Theta(\pi_1(X/G))<\prod_{i=1}^k \pi_1([(X_i/I)\left/ (G/I) \right.],\bar{x}'_i)$ has finite index it follows that $\Theta(\Gamma)<\prod_{i=1}^k \pi_1([(X_i/I)\left/ (G/I) \right.],\bar{x}'_i)$ has finite index.

For every $i=1,\ldots,k$, we have $\pi_1(X_i/I,[x_i])<\pi_1([(X_i/I)\left/ (G/I)\right.],\bar{x}'_i)$ as a normal finite-index subgroup. Define the subgroup 
$$\Theta(\Gamma)_i:=\Theta(\Gamma)\cap \left(\{e_1\}\times \cdots\times \pi_1(X_i/I,[x_i])\times \cdots\times \{e_k\}\right) $$ where $e_k\in \pi_1(X_j/I,[x_j])$ is the identity element. As $\Theta(\Gamma)_i$ has finite index in $\pi_1(X_i/I,[x_i])$, there exists a normal subgroup of finite index $H_i$ of $\pi_1([(X_i/I)\left/ (G/I) \right.])$ contained in $\Theta(\Gamma)_i$.
Set $H:=H_1\times \cdots\times H_k$, then $H\lhd \Theta(\Gamma)$ and it is a finite-index normal subgroup of $\prod_{i=1}^k \pi_1([(X_i/I)\left/ (G/I) \right.],\bar{x'}_i)$. The subgroup $\mathcal{N}:=\Theta^{-1}(H)\cap \Gamma$ sa\-tis\-fies the stated properties.
\end{proof}


\subsubsection{Case of smooth curves}

\begin{cor}\label{cor:MainThmGroups} Let $C_1,\ldots, C_k$ be smooth algebraic curves and let $G$ be a finite group acting on each $C_i$. Denote $C=C_1\times \cdots\times C_k$. Consider $\mathcal{C}=[C/G]$ with $G$ acting diagonally on $C$. Then $\pi_1(C/G)$ has a normal subgroup $\mathcal{N}$ of finite index isomorphic to $\Pi_1\times \cdots \times\Pi_k$ where $\Pi_i$ is either a surface group or a finitely generated free group for $i=1,\ldots,k$.
\end{cor}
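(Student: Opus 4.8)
The plan is to deduce this from Theorem \ref{thm:MainThm2} applied to the curves $C_1,\ldots,C_k$, once two points are established: that the hypotheses of that theorem are met, and that the abstract factors $H_i\lhd \pi_1(C_i/I,[x_i])$ that it produces can be identified geometrically as either surface groups or finitely generated free groups. First I would verify the hypotheses. Each smooth algebraic curve $C_i$ admits a universal cover, and since $G$ is finite and acts by automorphisms of $C_i$, the fixed locus $C_i^g$ of any $g\in G$ is a finite set of points (an automorphism of finite order that is not the identity has finitely many fixed points on a curve), so in particular $\abs{\pi_0(C_i^g)}<+\infty$ for every $g$ and $i$. The remaining hypothesis of Theorem \ref{thm:MainThm2} is that $\pi_1(C/G,[x])$ be residually finite; this is precisely the content promised in the text preceding Corollary \ref{cor:Int3}, where it is asserted that, following the arguments of \cite{CataneseBauer1}, $\pi_1((C_1\times\cdots\times C_k)/G)$ is residually finite. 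I would invoke that fact here.

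With the hypotheses in hand, Theorem \ref{thm:MainThm2} yields a finite-index normal subgroup $\mathcal{N}\lhd\pi_1(C/G,[x])$ with $\mathcal{N}\cong H_1\times\cdots\times H_k$ and $H_i\lhd\pi_1(C_i/I,[x_i])$ a normal finite-index subgroup. The crux of the corollary is therefore to understand the groups $\pi_1(C_i/I,[x_i])$. Here $I\lhd G$ is the subgroup generated by elements having a fixed point in every $C_j$; it acts on $C_i$, and the quotient $C_i/I$ is again a smooth algebraic curve (the quotient of a smooth curve by a finite group action is a smooth curve, as quotient singularities in dimension one are smooth points). Consequently $\pi_1(C_i/I,[x_i])$ is the fundamental group of a smooth algebraic curve, which is either a surface group (when the curve is projective, i.e.\ compact) or a finitely generated free group (when the curve is affine, i.e.\ non-compact, since such a curve is homotopy equivalent to a finite wedge of circles).

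Finally I would set $\Pi_i:=H_i$ and check that each $\Pi_i$ is of the required type. Since $H_i$ is a finite-index subgroup of $\pi_1(C_i/I,[x_i])$, and finite-index subgroups inherit the relevant structure, I would argue as follows: if $\pi_1(C_i/I,[x_i])$ is a surface group then its finite-index subgroups are again surface groups (a finite-index subgroup corresponds to a finite cover of the surface, which is again a closed orientable surface); if $\pi_1(C_i/I,[x_i])$ is free of finite rank, then by the Nielsen--Schreier theorem its finite-index subgroups are again free of finite rank. Thus each $\Pi_i=H_i$ is a surface group or a finitely generated free group, and $\mathcal{N}\cong\Pi_1\times\cdots\times\Pi_k$ has the asserted form. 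The main obstacle is not any single step but rather the residual finiteness of $\pi_1(C/G)$, which must be imported from the separate argument sketched after Corollary \ref{cor:Int3}; everything else is a matter of assembling standard facts about automorphisms of curves, quotients of curves, and subgroups of surface and free groups.
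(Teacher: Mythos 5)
Your plan is correct in outline, but it has one genuine gap: the residual finiteness of $\pi_1(C/G)$. This is not a hypothesis of the corollary, and it cannot simply be ``invoked'' from the text preceding Corollary \ref{cor:Int3} --- that sentence in the introduction is only an announcement of a result that the paper \emph{proves} in the course of establishing this very corollary (Lemma \ref{lem:Ci'IsResiduallyFinite}). In other words, you are citing as an external fact the statement whose proof constitutes the main technical content of the corollary. The paper's argument for it is not routine: one must first replace each $[(C_i/I)/(G/I)]$ by $[(C_i/I)/H_i]$ with $H_i=(G/I)/K_i$ acting faithfully (otherwise the fundamental group need not be an orbifold surface group), check that this replacement only changes the fundamental group by a finite kernel via the fibration $[\mathrm{pt}/K]\to\mathcal{C}_i\to\mathcal{C}_i'$, observe that orbifold surface groups are residually finite \emph{and cohomologically good}, that goodness passes to finite products and finite-index subgroups, and finally apply the criterion of Grunewald--Jaikin-Zapirain--Zalesskii that a group surjecting with finite kernel onto a residually finite good group is itself residually finite. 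None of this appears in your proposal, and without it Theorem \ref{thm:MainThm2} cannot be applied.

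The remaining steps are fine and agree with what the paper intends: the hypothesis $\abs{\pi_0(C_i^g)}<\infty$ holds (though note your parenthetical should say ``acts nontrivially'' rather than ``is not the identity,'' since the action need not be faithful; if $g$ acts trivially then $C_i^g=C_i$ is connected, so the count is still finite), the quotient $C_i/I$ is again a smooth curve, its fundamental group is a surface group or a finitely generated free group, and finite-index subgroups of such groups are again of the same type. Once you supply the residual finiteness argument, the rest of your assembly goes through.
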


By Theorem \ref{thm:Int2} we have a morphism $\pi_1(C/G)\to \prod_{i=1}^k \pi_1([C_i/I\left / G/I\right.]) $ with finite kernel, however if the action of $G/I$ is not faithful on $C_i/I$ then $\pi_1([(C_i/I)\left / (G/I)\right.])$ is not necessarily an orbifold surface group. This can be overcome as follows: let $K_i:=\ker(G/I\to \Aut C_i/I)$ and $H_i:=(G/I)/K_i$. Denote by $\mathcal{C}_i:=[(C_i/I)/G/I]$ and by $\mathcal{C}_i':=[(C_i/I) /H_i]$, we have a canonical morphism $\mathcal{C}_i\to \mathcal{C}_i'$.

\begin{lem} The induced homomorphism $q_i:\pi_1(\mathcal{C}_i)\to \pi_1(\mathcal{C}_i')$ is surjective and has finite kernel.
\end{lem}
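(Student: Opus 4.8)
The plan is to reduce the statement to a diagram chase between the two fibration exact sequences attached to the stacks $\mathcal{C}_i$ and $\mathcal{C}_i'$. First I would record that, since $C_i/I$ is connected, locally path-connected and semi-locally simply connected (it is the quotient of a smooth curve by the finite group $I$), the fiber homotopy exact sequence of \S\ref{ss:FiberHomotopyES} applied to $\mathcal{C}_i=[(C_i/I)/(G/I)]$ and to $\mathcal{C}_i'=[(C_i/I)/H_i]$ yields two short exact sequences
\begin{equation*}
1\to \pi_1(C_i/I)\to \pi_1(\mathcal{C}_i)\xrightarrow{\varphi} G/I\to 1,\qquad 1\to \pi_1(C_i/I)\to \pi_1(\mathcal{C}_i')\xrightarrow{\varphi'} H_i\to 1.
\end{equation*}
Here I use that the torsor $C_i/I\to [(C_i/I)/(G/I)]$ has fiber $G/I$ regardless of whether the $G/I$-action is faithful, so the left-hand term is genuinely $\pi_1(C_i/I)$ in both rows.

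Next I would assemble $q_i$ into a commutative ladder. The morphism $\mathcal{C}_i\to\mathcal{C}_i'$ is induced by the identity of $C_i/I$ together with the quotient homomorphism $G/I\to H_i=(G/I)/K_i$, so it fits into
\[
\begin{tikzcd}
1 \arrow[r] & \pi_1(C_i/I) \arrow[r] \arrow[d,"\mathrm{id}"] & \pi_1(\mathcal{C}_i) \arrow[r,"\varphi"] \arrow[d,"q_i"] & G/I \arrow[r] \arrow[d] & 1 \\
1 \arrow[r] & \pi_1(C_i/I) \arrow[r] & \pi_1(\mathcal{C}_i') \arrow[r,"\varphi'"] & H_i \arrow[r] & 1
\end{tikzcd}
\]
in which the left vertical arrow is the identity (the underlying space map is the identity of $C_i/I$) and the right vertical arrow is the surjection $G/I\to H_i$ with kernel $K_i$.

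Then I would run the diagram chase by hand, since the rows are not sequences of abelian groups and the snake lemma is not available verbatim. For surjectivity: given $b'\in\pi_1(\mathcal{C}_i')$, lift $\varphi'(b')$ along $G/I\to H_i$, lift that along $\varphi$ to some $b_0\in\pi_1(\mathcal{C}_i)$, and observe that $q_i(b_0)$ and $b'$ differ by an element of the image of $\pi_1(C_i/I)$; correcting $b_0$ by the corresponding element (using that the left arrow is onto) produces a preimage of $b'$. For the kernel: the assignment $b\mapsto\varphi(b)$ restricts to a homomorphism $\ker q_i\to K_i$. It is injective, because any element of $\ker q_i\cap\ker\varphi$ lies in $\pi_1(C_i/I)$ and is carried by the identity to $1$; it is surjective, because any $\kappa\in K_i$ lifts along $\varphi$ to some $b_0$ with $q_i(b_0)\in\operatorname{im}\pi_1(C_i/I)$, and correcting $b_0$ by an element of $\pi_1(C_i/I)$ lands it in $\ker q_i$ while preserving its $\varphi$-image. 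Hence $\ker q_i\cong K_i$.

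Finally, since $K_i$ is a subgroup of the finite group $G/I$, it is finite, so $q_i$ is surjective with finite kernel. I do not expect a genuine obstacle: the argument is entirely formal once the two exact sequences are in place. The only points requiring care are that the fiber term of both rows is literally $\pi_1(C_i/I)$ even though the $G/I$-action on $C_i/I$ is non-faithful, and that the kernel and surjectivity statements be obtained by an explicit element chase rather than quoted from the abelian snake lemma.
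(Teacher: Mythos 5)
Your proof is correct, but it takes a genuinely different route from the paper's. The paper treats the map $\mathcal{C}_i\to\mathcal{C}_i'$ itself as a fibration of stacks with fibre the classifying stack $[\mathrm{pt}/K_i]$, takes the long homotopy exact sequence
$\cdots\to\pi_2(\mathcal{C}_i')\to\pi_1([\mathrm{pt}/K_i])\to\pi_1(\mathcal{C}_i)\to\pi_1(\mathcal{C}_i')\to 1$,
and concludes from $\pi_1([\mathrm{pt}/K_i])\cong K_i$ that $q_i$ is onto with kernel a quotient of the finite group $K_i$. You instead compare both stacks to their common atlas $C_i/I$, obtaining the two short exact sequences of \S\ref{ss:FiberHomotopyES} with quotients $G/I$ and $H_i$, and run an explicit non-abelian diagram chase on the resulting ladder. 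Your version uses only machinery the paper has already set up (the sequence (\ref{eq:OrbExSeq1}) for a finite group acting on a nice space) plus elementary group theory, so it avoids having to know that $\mathcal{C}_i\to\mathcal{C}_i'$ is a fibration with gerbe fibre and that $\pi_1([\mathrm{pt}/K_i])\cong K_i$; it also yields the sharper conclusion $\ker q_i\cong K_i$ exactly, rather than merely a quotient of $K_i$. What the paper's argument buys in exchange is brevity and the fact that it simultaneously controls the higher homotopy groups. The two hypotheses you flag are indeed the only ones needing verification, and both hold: $C_i/I$ is again connected, locally path-connected and semi-locally simply connected, so \S\ref{ss:FiberHomotopyES} applies to the (possibly non-faithful) $G/I$- and $H_i$-actions with fibre terms literally $\pi_1(C_i/I)$; and the left vertical arrow of your ladder is the identity because the composite $C_i/I\to\mathcal{C}_i\to\mathcal{C}_i'$ is the atlas map of $\mathcal{C}_i'$.
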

\begin{proof}
By choosing a point $x_i\in C_i$ and denoting by $\bar{x}_i$ its image in both $\mathcal{C}_i$ and $\mathcal{C}_i'$ we obtain a fibration $[\mathrm{pt}/K,\mathrm{pt}]\hookrightarrow (\mathcal{C}_i,\bar{x}_i)\to  (\mathcal{C}_i',\bar{x}_i)$. By taking the long homotopy exact sequence 
$$\ldots \to \pi_2(\mathcal{C}_i',\bar{x}_i)\to \pi_1(\mathrm{pt}/K,\mathrm{pt}) \to \pi_1(\mathcal{C}_i,\bar{x}_i)\to \pi_1(\mathcal{C}_i',\bar{x}_i)\to 1, $$
as there is an isomorphism between $\pi_1(\mathrm{pt}/K,\mathrm{pt})$ and $\pi_0(K,1_K)$, the result follows.
\end{proof}
So by composing, we obtain a morphism $\Theta:\pi_1(C/G)\to \prod_{i=1}^k \pi_1(\mathcal{C}_i)\to \prod_{i=1}^k \pi_1(\mathcal{C}_i')$, this allows us to prove the following lemma, which together with Theorem \ref{thm:MainThm2} will imply Corollary \ref{cor:MainThmGroups}.

\begin{lem}\label{lem:Ci'IsResiduallyFinite} The group $\pi_1(C/G)$ is residually finite.
\end{lem}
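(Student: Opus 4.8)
The plan is to exploit the homomorphism $\Theta\colon \pi_1(C/G)\to \prod_{i=1}^k \pi_1(\mathcal{C}_i')$ constructed just above, which by the preceding lemma and Theorem~\ref{thm:Int2} has finite kernel $E$ and finite-index image, and whose targets $\pi_1(\mathcal{C}_i')$ are now genuine orbifold surface groups because $H_i$ acts faithfully on $C_i/I$. The first step is to record that orbifold surface groups are residually finite and cohomologically good in the sense of Serre (for open curves $\pi_1(\mathcal{C}_i')$ is virtually free, and for projective curves it is a cocompact Fuchsian group; both facts are available and are exactly the properties invoked in \cite{CataneseBauer1}). Since goodness and residual finiteness are preserved under finite products and pass to finite-index subgroups, the product $\prod_{i=1}^k \pi_1(\mathcal{C}_i')$ and hence the finite-index image $\bar\Gamma:=\Theta(\pi_1(C/G))$ are both good and residually finite. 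Thus $\Gamma:=\pi_1(C/G)$ sits in an extension $1\to E\to \Gamma\to \bar\Gamma\to 1$ with $E$ finite and $\bar\Gamma$ good and residually finite, and the whole difficulty is concentrated in the finite kernel $E$.

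I would then reduce this finite extension to a \emph{central} one. Conjugation gives a homomorphism $\Gamma\to \Aut(E)$ with finite image, so its kernel $Z:=C_{\Gamma}(E)$ has finite index in $\Gamma$; since residual finiteness is inherited by finite-index overgroups, it suffices to show $Z$ is residually finite. Here $E\cap Z=Z(E)$ is finite and central in $Z$, and $Z/Z(E)$ maps isomorphically onto a finite-index subgroup $\bar Z\le \bar\Gamma$, which is again good and residually finite. So we are reduced to the statement that a central extension $1\to Z(E)\to Z\to \bar Z\to 1$ with finite kernel over a good, residually finite group is residually finite.

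For this final step the idea is to detect the central kernel in a finite quotient using goodness. The extension is classified by a class $\alpha\in H^2(\bar Z; Z(E))$ with finite coefficient module; goodness of $\bar Z$ gives $H^2(\widehat{\bar Z}; Z(E))\xrightarrow{\sim} H^2(\bar Z; Z(E))$, and by continuity of profinite cohomology the corresponding class factors through a finite quotient $\bar Z\twoheadrightarrow \bar Q$. Pulling back the associated finite central extension yields a finite group $Q$ and a homomorphism $Z\to Q$ that is injective on $Z(E)$; combined with the residual finiteness of $\bar Z$ to separate elements lying outside $Z(E)$, this shows every nontrivial element of $Z$ survives in some finite quotient, so $Z$ — and therefore $\Gamma=\pi_1(C/G)$ — is residually finite. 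I expect the main obstacle to be precisely this central-extension argument, namely cleanly invoking goodness of the orbifold surface groups $\pi_1(\mathcal{C}_i')$ (and its stability under products and finite-index passage) to kill the finite kernel $E$; separating elements outside $E$ is routine from residual finiteness of the image.
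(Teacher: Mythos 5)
Your proof is correct and follows essentially the same route as the paper: both pass through the finite-kernel homomorphism $\Theta$ onto a finite-index subgroup of $\prod_{i=1}^k \pi_1(\mathcal{C}_i')$ and rest on residual finiteness plus cohomological goodness of orbifold surface groups, together with the stability of these properties under finite products and finite-index subgroups. The only difference is that where the paper invokes \cite[Proposition~6.1]{grunewald2008} as a black box for the final step (a group mapping onto a good, residually finite group with finite kernel is residually finite), you reprove that proposition via the reduction to a central extension and the $H^2$-continuity argument --- which is essentially how it is established in \emph{op.~cit.}
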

\begin{proof}
First note that, as $\pi_1(\mathcal{C}_i')$ is an orbifold surface group, it is in particular residually finite. Now, it follows that $\Theta(\pi_1(C/G))$ is residually finite as it is a finite-index subgroup of a direct product of residually finite groups.

We need another property of these groups to continue. Let $H$ be a group and let $\hat{H}$ be its profinite completion. A group $H$ is called \emph{good} if for each $k\geq 0$ and for each finite $H$-module $M$ the natural homomorphism 
$$H^k(\hat{H},M)\to H^k(H,M) $$
is an isomorphism. In \cite[Lemmas~3.2 and~3.4, Proposition~3.6]{grunewald2008} it is shown that a finite-index subgroup of a good group is good, the product of good groups is good and that $\pi_1(\mathcal{C})$ for $\mathcal{C}$ an algebraic orbifold curve is good. We obtain therefore that $\Theta(\pi_1(C/G))$ is good.

Finally, \cite[Proposition~6.1]{grunewald2008} asserts that if $T$ is a residually finite good group and $\varphi:H\to T$ is a surjective homomorphism with finite kernel then $H$ is residually finite.
Applying this to $\Theta':\pi_1(C/G)\to \Theta(\pi_1(C/G))$ we obtain the result.
\end{proof}

\subsection{Partial compactifications of arrangement of lines}
The original motivation of this work was to study the partial compactifications of the complement of an arrangement of lines in $\mathbb{P}_\mathbb{C}^2$ which is the topic of my Ph.D. thesis. In \cite{aguilar2019fundamental} a general method for computing a presentation of the fundamental group was given and some examples studied. A family of arrangements related to the studied in \emph{op.~cit.} is available for any $n\in \mathbb{N}$, however this will require a treatment one by one. The results obtained here can be used to study some of these partial compactifications in family.

\subsubsection{Partial compactification of the complement of an arrangement of lines\label{ss:LAC}}
Consider the projective plane $\mathbb{P}^2_\mathbb{C}$ with homogeneous coordinates $(z_1:z_2:z_3)$.

 Let $\A=\sum_{i=1}^k L_i$ be a divisor in $\mathbb{P}^2$ such that the irreducible components $L_i$ are copies of $\mathbb{P}^1$ (lines). Then the singular set $\Sing \A$ of $\A$ consists only of points. Consider $\pi: \Bl_{\Sing\A}\mathbb{P}^2\to \mathbb{P}^2$ the blow up of the projective plane at the points $\Sing \A$. The divisor $\pi^* \A=\sum_{i=1}^{k+\abs{\Sing \A}} D_i$ has as irreducible components copies of $\mathbb{P}^1$, with $D_1,\ldots, D_k$ being the strict transform of $L_1,\ldots,L_k$ respectively and $D_{k+1},\ldots, D_{k+\abs{\Sing \A}}$ being the exceptional divisors. Take a subset $J\subset \{1,2,\ldots,k+\abs{\mbox{Sing} \A}\}$. The surface $\Bl_{\Sing \A}\mathbb{P}^2\setminus (\cup_{j \in J} D_j)$ is called a \emph{partial compactification of} $\mathbb{P}^2\setminus (\cup L_i)$. We are interested in how the fundamental group changes when we partially compactify the complement of such an arrangement $\A\subset \mathbb{P}^2$.
\subsubsection{Examples}
 The subvariety of $\mathbb{P}^2$
$$\Ceva(n):=\{(z_1:z_2:z_3)\mid (z_1^n-z_2^n)(z_1^n-z_3^n)(z_2^n-z_3^n)=0\} $$
can be seen as the union of the closure of the three singular fibers of the rational map $f:\mathbb{P}^2\dashrightarrow \mathbb{P}^1$ given by $(z_1:z_2:z_3)\mapsto ((z_1^n-z_2^n):(z_2^n-z_3^n))$. The map $f$ is not defined in a subset $S=\{p_1,\ldots,p_{n^2}\}\subset \Sing \Ceva(n)$ consisting of $n^2$ points where $\A_1:=\{z_1^n-z_2^n=0\}=\sum_{i=1}^n L_i$ intersects $\A_2:=\{z_1^n-z_3^n=0\}=\sum_{i=n+1}^{2n}L_i$. It actually happens that $S\subset \A_3:=\{z_2^n-z_3^n=0\}=\sum_{i=2n+1}^{3n}L_i$ and $S$ consists of points where $3$ lines of $\Ceva(n)$ meet. We have another $3$ points $p_{n^2+i}$ in $\Sing( \Ceva(n))$ which correspond to each singular point of $\A_i$ for $i=1,2,3$ and hence of multiplicity $n$.

The rational map $f$ extends to a morphism $\tilde{f}:\Bl_{\Sing\Ceva(n)}\mathbb{P}^2\to \mathbb{P}^1$ having as generic fiber the \emph{Fermat curve of degree} $n$ defined as $F(n):=\{z_1^n+z_2^n+z_3^n=0\}\subset \mathbb{P}^2$. Therefore $\tilde{f}$ is an isotrivial fibration.

Denote by $\mu(n)$ the group of $n^\mathrm{th}$ roots of unity. By taking $3$ copies of it we define $H(n):=\mu_1(n)\oplus\mu_2(n)\oplus\mu_3(n)/\left\langle \mu_1\mu_2\mu_3=1 \right\rangle$ where $\mu_i\in \mu_i(n)$. It acts on $F(n)$ via $(z_1:z_2:z_3)\mapsto (\mu_1z_1:\mu_2 z_2:\mu_3 z_3)$. The proof of the following theorem will appear elsewhere.
\begin{thm}
Consider the diagonal action of $H(n)$ in $F(n)\times F(n)$. Denote by $S$ the minimal resolution of $F(n)\times F(n)/H(n)$.
\begin{enumerate}
\item The fibration $S\to (F(n)\times F(n))/H(n)\to F(n)/H(n)\cong \mathbb{P}^1$ is isomorphic to $\tilde{f}$.

\item Every singular point in $F(n)\times F(n)/H(n)$ corresponds to the contraction of the strict transform $D_i$ of some line $L_i\in \Ceva(n)$.
\item The contraction of the $n$ lines corresponding to $\A_i$ lie in the line $E_i$ which is the exceptional divisor corresponding to the unique singular point in $\A_i$.
\item $E_i$ is mapped to a point via $F(n)\times F(n)/H(n)\to \mathbb{P}^1$.
\end{enumerate}

\end{thm}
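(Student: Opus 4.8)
The plan is to realise both fibrations as relatively minimal isotrivial fibrations over $\mathbb{P}^1$ with fibre $F(n)$ and the same monodromy, and then to match their minimal resolutions curve-by-curve with the blown-up $\Ceva(n)$ configuration. First I would record the structure of $F(n)\to F(n)/H(n)\cong\mathbb{P}^1$: since every element of $H(n)$ multiplies the three coordinates by $n$-th roots of unity, it preserves the Fermat equation, and a Riemann--Hurwitz count shows this Galois cover is branched over exactly three points, each of ramification index $n$, the branch points being the images of the three families of coordinate points $\{z_i=0\}\cap F(n)$. These three branch points match the three singular values of $\tilde f$, read off from $z_1^n=z_2^n$, $z_2^n=z_3^n$ and $z_1^n=z_3^n$, hence correspond to the three subpencils $\A_1,\A_2,\A_3$. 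I would then use $H(n)$-invariant ratios of the homogeneous coordinates of the two Fermat factors to write down an explicit $H(n)$-invariant rational map $F(n)\times F(n)\dashrightarrow\mathbb{P}^2$ with image all of $\mathbb{P}^2$, intertwining the second projection $(F(n)\times F(n))/H(n)\to F(n)/H(n)$ with $\tilde f$; this identifies the two fibrations birationally over $\mathbb{P}^1$.

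Second, I would resolve the quotient. As $H(n)$ is abelian, every stabiliser of the diagonal action on $F(n)\times F(n)$ is cyclic, so $(F(n)\times F(n))/H(n)$ has only cyclic quotient singularities, and these lie over the three branch points, coming from pairs of coordinate points. Computing the linearised action of the relevant cyclic stabiliser on the two tangent directions fixes the Hirzebruch--Jung type of each singularity, and resolving yields the smooth surface $S$. The birational map of the first step should then upgrade to an isomorphism $S\xrightarrow{\ \sim\ }\Bl_{\Sing\Ceva(n)}\mathbb{P}^2$ carrying the induced fibration to $\tilde f$, which is statement (1). Here one verifies that the singular points produce exactly the centres of the Ceva blow-up and that the self-intersections agree, so that the two surfaces are genuinely identified as fibred surfaces.

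Third, for (2)--(4) I would trace the exceptional curves through this isomorphism. The contraction $S\to(F(n)\times F(n))/H(n)$ collapses precisely the curves created by the resolution; identifying each of them with the strict transform $D_i$ of a line $L_i$ shows that every singular point of the quotient is the image of some $D_i$, because $L_i$ is a component of a singular fibre meeting the ramification locus, giving (2). For (3), the $n$ lines of $\A_i$ all pass through the multiplicity-$n$ point $p_{n^2+i}$, so after blowing up their strict transforms each meet the exceptional divisor $E_i$; hence the singular points (the images of these $D_i$) lie on $E_i$, producing the star-shaped singular fibre centred on $E_i$. For (4), $E_i$ is contained in the singular fibre of $\tilde f$ over the branch point attached to $\A_i$, so it is contracted to a point by the fibration.

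The hard part will be the second step: the explicit local analysis of the cyclic quotient singularities together with the bookkeeping of their Hirzebruch--Jung resolutions. One must pin down the stabilisers (including the subtlety that $H(n)$ acts non-faithfully when $3\mid n$, the scalar subgroup acting trivially), determine which pairs of coordinate points genuinely produce surface singularities, and check that each resolves to a single rational curve matching a unique $D_i$. The global numerology --- the number of singular points, the multiplicities, and the self-intersections --- depends on the arithmetic of $n$, and converting the birational identification of the first step into an honest isomorphism of fibred surfaces hinges entirely on getting this local-to-global matching right.
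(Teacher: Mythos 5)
A preliminary caveat: the paper does not prove this theorem --- it states explicitly that ``the proof of the following theorem will appear elsewhere'' --- so there is no argument of the author's to compare yours against, and I can only judge the proposal on its own terms. Your strategy is the natural one, and the global numerology does check out. The only points of $F(n)$ with nontrivial stabilizer are the $3n$ coordinate points $\{z_i=0\}\cap F(n)$, each with cyclic stabilizer of order $n$, and the three cyclic subgroups attached to the three coordinates intersect pairwise trivially in $H(n)\cong(\Z/n)^2$; hence a pair $(p,q)$ has nontrivial stabilizer precisely when $p$ and $q$ lie on the same coordinate line, the stabilizer is then cyclic of order $n$ acting on the two tangent directions as $\tfrac{1}{n}(1,1)$, and the $3n^2$ such pairs form $3n$ orbits. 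Each $\tfrac{1}{n}(1,1)$ point resolves to a single rational $(-n)$-curve, while each line $L_j$ of $\Ceva(n)$ passes through $n$ of the $n^2$ triple points and through one multiplicity-$n$ point, so $D_j^2=1-(n+1)=-n$: the $3n$ exceptional curves of $S\to (F(n)\times F(n))/H(n)$ are exactly accounted for by the $3n$ strict transforms $D_j$, which is assertion (2); assertions (3) and (4) then follow from the incidences $D_j\cdot E_i=1$ and from the fact that $E_i$ lies in the fiber $\sum_{L_j\subset\A_i}D_j+nE_i$ of $\tilde{f}$.

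The gap is that every decisive step in your write-up is announced rather than carried out. You never exhibit the invariant map --- the natural candidate is $(z,w)\mapsto(z_1w_2w_3:z_2w_1w_3:z_3w_1w_2)$, which is $H(n)$-invariant, of degree $n^2=\abs{H(n)}$ onto $\mathbb{P}^2$, and whose indeterminacy locus on $F(n)\times F(n)$ is exactly the set of pairs with nontrivial stabilizer --- you do not perform the local $\tfrac{1}{n}(1,1)$ computation, and you give no mechanism for upgrading the birational identification to an isomorphism of fibered surfaces. On that last point, be warned that you cannot appeal to relative minimality: $E_i$ has self-intersection $-1$ and is contained in a fiber of $\tilde{f}$, so neither fibration is relatively minimal; one must instead check directly that the minimal resolution of the quotient resolves the indeterminacy of the invariant map and that each exceptional $(-n)$-curve maps isomorphically onto the corresponding $D_j$ (a consistency check: both surfaces have Euler characteristic $n^2+6$). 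Finally, the ``subtlety'' you flag about $H(n)$ acting non-faithfully when $3\mid n$ is vacuous --- the diagonal scalar subgroup is already quotiented out in the definition of $H(n)$, and the residual action on $F(n)$ is faithful for all $n\ge 2$. As written, your text is a correct plan, not a proof.
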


The Fermat curve $F(n)$ of degree $n$  can be seen as a branched covering of $\mathbb{P}^1$ of degree $n^2$ via the morphism in $\mathbb{P}^2$ given by $F(n)\ni(z_1:z_2:z_3)\to (z_1^n:z_2^n:z_3^n)\in \{w_0+w_1+w_2=0\}$ which branches at the points $(1:-1:0),(1:0:-1),(0:1:-1)$. Over each branching point there are $n$ points, we denote by $X_1,\ldots,X_n$ for those over $(1:-1:0)$,  by
$Y_1,\ldots,Y_n$ over $(1:0:-1)$ and $Z_1,\ldots, Z_n$ over $(1:-1:0)$.

Recall that for $S\to S'$ be a resolution of singularities of $S'$, if $S'$ has only quotient singularities, by \cite[Theorem~7.8.1]{Kollar1993} we have that $\pi_1(S)\to\pi_1(S')$ is an isomorphism.

\begin{exmp}\label{ex:4.1} Consider the surface $S_1:=\big(F(n)\times( F(n)\setminus \{X_1,\ldots, X_n\})\big)\,/H(n)$. The subgroup $I$ generated by the elements of $H(n)$ having fixed points both in $F(n)$ and in $F(n)\setminus \{X_1,\ldots, X_n\})$ equals $H(n)$. As $F(n)/H(n)\cong \mathbb{P}^1$, $F(n)\setminus \{X_1,\ldots, X_n\})/H(n)\cong \mathbb{C}$ and by Theorem \ref{thm:Int2} the morphism 
$$\pi_1(S_1)\to \pi_1(\mathbb{P}^1)\times \pi_1(\mathbb{C}) $$
has finite kernel, it follows that $\pi_1(S_1)$ is finite. 

The minimal resolution of singularities $S_1'\to S_1$ can be identified with the fo\-llo\-wing partial compactification of $\Ceva(n)$. Consider $$J:=\{1,\ldots,n, 3n+n^3+1\}\subset \{1,\ldots, 3n +n^2+3\}$$
then following the construction given in \S \ref{ss:LAC} we have that $$\Bl_{\Sing \Ceva(n)}\mathbb{P}^2\setminus \{\cup_{j \in J}D_j\}\cong S_1'.$$
That is, from the surface $\Bl_{\Sing \Ceva(n)}\mathbb{P}^2$ we remove only the strict transform of $\A_1$ and the exceptional divisor coming from the singular point of $\A_1$. This can be identified with a singular fiber or $\tilde{f}$.
\end{exmp}

\begin{exmp}
Consider now $S_2:=(F(n)\times F(n)\setminus\{X_i,Y_i\})/H(n)$. In this case the subgroup $I$, defined as in the previous paragraph, is isomorphic to $\mu(n)$. As $F(n)/\mu(n)\cong \mathbb{P}^1$, $F(n)\setminus \{X_i,Y_i\})/\mu(n)\cong \mathbb{C}^*$ and by Theorem \ref{thm:Int2} the morphism 
$$\pi_1(S_2)\to \pi_1([\mathbb{P}^1/\mu(n)])\times \pi_1([\mathbb{C}^*/\mu(n)]) $$
has finite kernel and the image is a finite-index subgroup.

By Theorem \ref{thm:MainThm2} and Corollary \ref{cor:MainThmGroups}, we have that $\mathbb{Z}\lhd \pi_1(S_2)$ has finite index. As in Example \ref{ex:4.1} the minimal resolution of singularities $S_2'\to S_2$ can be identified with $\Bl_{\Sing \Ceva(n)}\mathbb{P}^2$ minus two singular fibers of $\tilde{f}$.
\end{exmp}

\begin{exmp} If we consider $S_3:=(F(n)\times F(n)\setminus \{X_i,Y_i,Z_i\})/H(n)$ it can be identified with $\Bl_{\Sing \A}\mathbb{P}^2$ minus the three singular fibers of $\tilde{f}$. As $H(n)$ acts freely in $F(n)\times F(n)\setminus \{X_i,Y_i,Z_i\}$, the long exact sequence of homotopy associated to the covering map $F(n)\times F(n)\setminus\{X_i,Y_i,Z_i\}\to S_3$ yields 
$$1\longrightarrow \pi_1(F(n))\times \pi_1(F(n)\setminus\{X_i,Y_i,Z_i\})\longrightarrow \pi_1(S_3)\longrightarrow H(n)\to 1. $$
\end{exmp}

\begin{rem} We can remove points also in the first component $F(n)$ of the product. However, we can not get more partial compactifications of $\Ceva(n)$ in this way. This can be shown by drawing the dual graph of the divisor $\pi^*\Ceva (n)$ and noticing that the lines obtained by removing points does not satisfy the intersection pattern of the graph. 
\end{rem}


\ifx\undefined\bysame
\newcommand{\bysame}{\leavevmode\hbox to3em{\hrulefill}\,}
\fi

\end{document}